\documentclass[10pt]{amsart}
\usepackage{amssymb}
\usepackage{epsfig}
\usepackage{url}
\usepackage{setspace}
\usepackage{color}
\theoremstyle{plain}

\newtheorem{thm}{Theorem}[section]
\newtheorem{cor}[thm]{Corollary}

\newtheorem{rem}[thm]{Remark}

\newtheorem{conj}[thm]{Conjecture}

\def\bbb{\mathbb}

\renewcommand{\phi}{\varphi}

\newcommand{\Z}{\bbb{Z}}
\newcommand{\Q}{\bbb{Q}}

\newcommand{\de}{\delta}
\newcommand{\ep}{\epsilon}
\newcommand{\tht}{\theta}

\begin{document}
\author{Andrew Bremner (Tempe)
%\thanks{School of Mathematics and Statistics, Arizona State University, Tempe AZ 85287-1804, USA; e-mail: bremner@asu.edu}
\and Maciej Ulas (Krak\'ow)
%\thanks{Jagiellonian University, Institute of Mathematics, {\L}ojasiewicza 6, 30-348 Krak\'ow, Poland. email Maciej.Ulas@im.uj.edu.pl}
}
\title{On the reducibility type of trinomials}
\maketitle
\begin{abstract}
Say a trinomial $x^n+A x^m+B \in \Q[x]$ has reducibility type $(n_1,n_2,...,n_k)$ if there exists a factorization of the trinomial
into irreducible polynomials in $\Q[x]$ of degrees $n_1$, $n_2$,...,$n_k$, ordered so that  $n_1 \leq n_2 \leq ... \leq n_k$.
Specifying the reducibility type of a monic polynomial of fixed degree is equivalent to specifying rational points on an 
algebraic curve. When the genus of this curve is 0 or 1, there is reasonable hope that all its rational points may be described;
and techniques are available that may also find all points when the genus is 2. Thus all corresponding reducibility types may be
described. These low genus instances are the ones studied in this paper.
\end{abstract}

\vspace{0.25in}

\noindent
2010 {\it Mathematics Subject Classification}: Primary 11R09; Secondary 11C08, 12E10, 14G05 \\
{\it Key words and phrases}: trinomial, reducibility \\ \\

\section{Introduction}\label{sec0}
The reducibility over $\Q$ of trinomials $x^n+A x^m+B \in \Q[x]$, $A
B \neq 0$ (where without loss of generality $n \geq 2m$), has a long
history, and the reader is referred to Schinzel~\cite{Sch1}, reprinted in Schinzel~\cite{Sch4}, who
provides an excellent documentation and full bibliography.
Here, we study the {\it type} of reducibility of
trinomials. We say a trinomial has reducibility type
$(n_1,n_2,...,n_k)$ if there exists a factorization of the trinomial
into irreducible polynomials in $\Q[x]$ of degrees $n_1$, $n_2$,...,$n_k$.
Types are ordered so that $n_1 \leq n_2 \leq ... \leq n_k$. Thus,
for example,
\[ x^5-341x+780 = (x-3) (x^2+x+20)(x^2+2x-13) \]
has reducibility type $(1,2,2)$. We consider trinomials only up to scaling,
in that the polynomials $x^n+A x^m+B$ and $x^n + A \lambda^{n-m} x^m +B \lambda^n$, $\lambda \in \Q$,
have exactly the same factorization type; in particular, when the trinomial has a rational root,
we may assume that this root is $1$. We shall always assume $AB \neq 0$. \\
A monic polynomial of degree $d$ is determined by $d$ coefficients. Specify
the reducibility type of $x^n+Ax^m+B$ as $(d_1,d_2,...,d_k)$; then comparing coefficients
of $x^r$, $1 \leq r < n$, $r \neq m$, leads to $n-2$ equations involving $\sum_{i=1}^{i=k} d_i=n$ coefficients.
With the appropriate weighting of the coefficients, the variety so determined has
generic dimension $1$ in weighted projective space, so is a curve.  When the genus of this
curve is 0 or 1, there is reasonable hope that all its rational points may be described;
and techniques are available that may also find all points when the genus is 2. These
low genus instances are the ones we investigate in this paper. \\
Although our motivation is the reducibility
of trinomials over $\Q$ it is clear that the constructions of curves
related to a particular type of reducibility can be viewed over
other fields (of characteristic 0).\\
It is immediate to verify the following. The quadratic $x^2+A x+B \in
\Q[x]$ is reducible, so of type $(1,1)$, if and only if up to scaling
$(A,B)=(u-1,-u)$, for $u \in \Q\setminus\{0,1\}$.
A cubic $x^3+A x+B$ is of type $(1,2)$ if and only if up to scaling
$(A,B)=(u-1,-u)$, for $u \in \Q\setminus\{0,1\}$ and $-4u+1 \neq \Box$;
and is of type $(1,1,1)$ if and only if $(A,B)=(-v^2+v-1,v^2-v)$, for
$v \in \Q\setminus\{0,1\}$.\\
\section{Reducibility type of quartic trinomials}\label{sec1}
\begin{thm}\label{1111type}
The trinomial $x^4+A x+B$ has reducibility type $(1,1,2)$ if and only if
up to scaling
\begin{equation*}
A=-(u+1)(u^2+1),\quad B=u(u^2+u+1), \qquad u \in\Q\setminus\{0,-1\},
\end{equation*}
with factorization
\begin{equation*}
x^4+A x+B = (x-1)(x-u)(x^2+(u+1)x+(u^2+u+1)).
\end{equation*}
There are no polynomials $x^4+A x+B$ with reducibility type $(1,1,1,1)$.
\end{thm}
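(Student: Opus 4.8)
The plan is to establish the two assertions separately, both by entirely elementary means. For the characterization of type $(1,1,2)$, first note that such a trinomial has at least two rational roots, namely the roots of the two linear factors. By the scaling normalization from the introduction (replace $x$ by $rx$, where $r\ne 0$ is one of these roots), we may assume one of the rational roots equals $1$, so that
\[ x^4+Ax+B=(x-1)(x-u)(x^2+cx+d) \]
for some $u,c,d\in\Q$. I would then expand the right-hand side and match coefficients: vanishing of the coefficient of $x^3$ forces $c=u+1$, then vanishing of the coefficient of $x^2$ forces $d=u^2+u+1$, and reading off the remaining two coefficients yields exactly $A=-(u+1)(u^2+1)$ and $B=u(u^2+u+1)$. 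The conditions $u\ne 0$ and $u\ne -1$ are then precisely equivalent to $B\ne 0$ and $A\ne 0$.

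For the converse direction one simply checks that the displayed product is $x^4+Ax+B$; the only nontrivial point is that the quadratic factor $x^2+(u+1)x+(u^2+u+1)$ is irreducible over $\Q$ for every admissible $u$. Its discriminant equals $(u+1)^2-4(u^2+u+1)=-(3u^2+2u+3)$, and since $3u^2+2u+3$ has negative discriminant $4-36<0$ it is strictly positive for all real $u$; hence the quadratic has strictly negative discriminant and thus no rational (indeed no real) root. This also shows that every $x^4+Ax+B$ with two or more rational roots has reducibility type exactly $(1,1,2)$, the boundary value $u=1$ corresponding to a repeated linear factor but still giving a factorization into irreducibles of degrees $1,1,2$.

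For the non-existence of type $(1,1,1,1)$, suppose $x^4+Ax+B=\prod_{i=1}^{4}(x-r_i)$ with all $r_i\in\Q$. Comparing the coefficients of $x^3$ and $x^2$ shows that the first two elementary symmetric functions $e_1,e_2$ of the $r_i$ vanish, whence $\sum_{i=1}^{4}r_i^2=e_1^2-2e_2=0$. Over $\Q$ (in fact over $\R$) a sum of squares is zero only when every summand is zero, so $r_1=r_2=r_3=r_4=0$ and therefore $B=\prod r_i=0$, contradicting $AB\ne 0$.

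Neither half of the theorem presents a genuine obstacle; the only points requiring a little care are the verification that normalizing a rational root to $1$ costs no generality, the treatment of the boundary values $u\in\{0,-1\}$ (and the repeated-root case $u=1$), and the sign analysis showing $3u^2+2u+3>0$, which is the one spot where the argument uses more than formal symbol manipulation. I would also note that the $(1,1,1,1)$ argument works verbatim over any formally real field, consistent with the paper's observation that these constructions make sense over arbitrary fields of characteristic zero; here the underlying ``curve'' is so degenerate that no genus computation is needed.
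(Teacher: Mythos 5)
Your proposal is correct. For the $(1,1,2)$ characterization you follow exactly the paper's route: scale a rational root to $1$, write $x^4+Ax+B=(x-1)(x-u)(x^2+cx+d)$, compare coefficients to get $c=u+1$, $d=u^2+u+1$, $A=-(u+1)(u^2+1)$, $B=u(u^2+u+1)$, and observe the quadratic's discriminant $-(3u^2+2u+3)$ is always negative, so the quadratic is irreducible; your bookkeeping of the excluded values $u=0,-1$ (equivalently $B\neq0$, $A\neq0$) is a small but welcome addition that the paper leaves implicit. Where you genuinely diverge is the $(1,1,1,1)$ statement: the paper disposes of it with the same discriminant computation (a type $(1,1,1,1)$ factorization would in particular force the quadratic cofactor $x^2+(u+1)x+(u^2+u+1)$ to split, which the negative discriminant forbids), whereas you argue directly from $x^4+Ax+B=\prod_{i}(x-r_i)$ that $e_1=e_2=0$, hence $\sum_i r_i^2=e_1^2-2e_2=0$, so all $r_i=0$ and $A=B=0$, a contradiction. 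Both arguments are complete; yours has the merit of not passing through the type $(1,1,2)$ parametrization at all and, as you note, of working over any formally real field, which matches in spirit the paper's later remark (after Theorem \ref{111n}) that real-field arguments exclude such splittings, while the paper's version gets the second assertion for free from the computation already done for the first.
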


\begin{proof}
Suppose we have reducibility type $(1,1,2)$. By scaling, there is a linear factor $x-1$,
and $x^4+A x+B=(x-1)(x-u)(x^2+p x+q)$. On comparing coefficients of powers of $x$,
\[ p-u-1=0, \quad q-p u-p+u=0, \quad A=-q u-q+p u, \quad B=q u, \]
and thus
\[ p=u+1, \quad q=u^2+u+1, \quad (A,B)=(-(u+1)(u^2+1), u(u^2+u+1)), \]
as required.  The discriminant of the quadratic factor is $-3u^2-2 u-3<0$,
so that reducibility type $(1,1,1,1)$ is impossible.
\end{proof}

\begin{thm}
$x^4+A x^2+B$ has reducibility type $(1,1,2)$ if and only if up to scaling
\begin{equation*}
(A,B)=(q-1,-q), \qquad q\in\Q\setminus\{1\}, \qquad -q \neq \square;
\end{equation*}
with factorization
\begin{equation*}
x^4+A x^2+B=(x-1)(x+1)(x^2+q);
\end{equation*}
and has reducibility type $(1,1,1,1)$ if and only if
\begin{equation*}
(A,B)=(-u^2-1, u^2), \qquad u \in\Q\setminus\{0\},
\end{equation*}
with factorization
\[ x^4+A x^2+B=(x-1)(x+1)(x-u)(x+u). \]
\end{thm}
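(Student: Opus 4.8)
The plan is to exploit that $f(x)=x^4+Ax^2+B$ is an \emph{even} polynomial, together with the standing hypothesis $B\neq 0$: if $\alpha$ is a root of $f$ then so is $-\alpha$, and $0$ is not a root. Consequently the rational roots of $f$ occur in pairs $\{r,-r\}$ with $r\neq 0$, and by the scaling $x^4+Ax^2+B\mapsto x^4+A\lambda^2x^2+B\lambda^4$ (which preserves both the biquadratic shape and the factorization type) we may normalise one such $r$ to be $1$.

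For reducibility type $(1,1,2)$ I would first rule out that the two linear factors coincide: a repeated linear factor would mean $f$ has a double root $r\neq 0$, hence by evenness also a double root $-r$, forcing $f=(x-r)^2(x+r)^2$, which is type $(1,1,1,1)$, not $(1,1,2)$. So the two linear factors are $x-r$ and $x+r$, and after scaling $f=(x^2-1)(x^2+px+q)$ for some $p,q\in\Q$ with $x^2+px+q$ irreducible and monic. Comparing coefficients of $x^3$ (equivalently: the Galois conjugate of a root of the quadratic factor is again a root of $f$, hence its negative, so that the quadratic factor is itself even) gives $p=0$, whence $f=(x^2-1)(x^2+q)=x^4+(q-1)x^2-q$ and $(A,B)=(q-1,-q)$. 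Now $x^2+q$ is irreducible over $\Q$ precisely when $-q\neq\square$; this already forces $q\neq 0$, so $B=-q\neq0$ is automatic, while $A=q-1\neq0$ amounts to $q\neq 1$. The converse is immediate from the displayed factorization.

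For type $(1,1,1,1)$ all four roots are rational, so counted with multiplicity they are $1,-1,u,-u$ for some $u\in\Q\setminus\{0\}$ after scaling; then $f=(x-1)(x+1)(x-u)(x+u)=(x^2-1)(x^2-u^2)=x^4-(u^2+1)x^2+u^2$, giving $(A,B)=(-u^2-1,u^2)$, with $A\neq0$ automatic and $B\neq0$ equivalent to $u\neq0$, and the converse is again immediate. The whole argument is elementary once the parity observation is in place; the only points needing care are the degenerate configurations — the double-root case that must be excised from type $(1,1,2)$, and pinning down exactly which $q$ and $u$ must be omitted — so that the stated open conditions are sharp and the two types do not overlap.
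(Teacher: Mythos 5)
Your proof is correct, and it is organized differently from the paper's. The paper makes the single ansatz $x^4+Ax^2+B=(x-1)(x-u)(x^2+px+q)$, equates all coefficients, and extracts the key relation $p(q-u)=0$, whose two branches ($p=0$, forcing $u=-1$; and $q=u$, which makes the quadratic split as $(x+1)(x+u)$) yield the two families at once; the dichotomy between types $(1,1,2)$ and $(1,1,1,1)$ is then just the irreducibility condition $-q\neq\square$ on $x^2+q$. You instead lead with the structural observation that $f(x)=f(-x)$ and $B\neq 0$ force the root multiset to consist of nonzero $\pm$ pairs, which pins the linear factors to $x\mp 1$ after scaling and makes the quadratic factor even; the only computation left is reading off the coefficient of $x^3$. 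Your route is slightly more conceptual and transfers verbatim to any trinomial in $x^2$ (indeed the paper itself invokes exactly this parity remark later, in the degree-six case $x^6+Ax^2+B$ of type $(1,2,3)$), while the paper's uniform coefficient-elimination is the template it reuses throughout to reduce reducibility types to rational points on curves. All the boundary bookkeeping in your write-up is right: ruling out a coincident linear factor in type $(1,1,2)$ (you could shortcut this by noting $-1$ is a root distinct from $1$ and cannot be a root of the irreducible quadratic), observing that $-q\neq\square$ already entails $q\neq 0$ so only $q\neq 1$ need be imposed from $AB\neq 0$, and allowing $u=\pm 1$ (repeated factors) in type $(1,1,1,1)$, in agreement with the statement.
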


\begin{proof}
By scaling, suppose $x^4+A x^2+B=(x-1)(x-u)(x^2+p x+q)$; then equating coefficients of powers of $x$:
\[ p-u-1=0, \quad A=q-p u-p+u, \quad -q u-q+p u=0, \quad B=q u. \]
It follows that $p(q-u)=0$. If $p=0$ then $u=-1$ and $(A,B)=(q-1, -q)$;
and if $q=u$, then $(A,B)=(-u^2-1, u^2)$.
\end{proof}

\section{Reducibility type of quintic trinomials}

\begin{thm}\label{quintype1}
The trinomial $x^5+ A x+B$ has reducibility type $(1,2,2)$ if and only if up to scaling
\begin{align*}
&A=\frac{(v^2-v-1)(v^4-2v^3+4v^2-3v+1)}{(2v-1)^2}, \\
&B=-\frac{v(v-1)(v^2+1)(v^2-2v+2)}{(2v-1)^2}
\end{align*}
for $v\in\Q$ with $v \neq 0,1,1/2$. There are no trinomials $x^5+A x+B$
with reducibility type $(1,1,1,2)$.
\end{thm}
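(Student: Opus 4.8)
The two assertions are best proved together. By scaling we may assume a factor of degree $1$ is $x-1$, so in the $(1,2,2)$ case write
\[
x^5+Ax+B=(x-1)(x^2+px+q)(x^2+rx+s).
\]
Equating to zero the coefficients of $x^4,x^3,x^2$ yields three equations, which solve successively to
\[
r=1-p,\qquad q=\frac{(1-p)(1+p^2)}{1-2p},\qquad s=\frac{-p(p^2-2p+2)}{1-2p},
\]
and the coefficients of $x$ and $1$ then give exactly the displayed expressions for $A,B$ with $v=p$. Here $v=1/2$ is barred (vanishing denominators) and $v=0,1$ are barred because they make $B=0$; moreover the two quadratic factors coincide only for $v=1/2$. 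This settles the ``only if'' half of the $(1,2,2)$ statement.

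Everything remaining reduces to a single irreducibility question. The quadratic $x^2+px+q$ has discriminant
\[
\Delta(v):=p^2-4q=\frac{2v^3-3v^2+4v-4}{1-2v},
\]
and $\Delta(v)\neq 0$ for every rational $v$ since $2v^3-3v^2+4v-4$ has no rational root. As $v\mapsto 1-v$ interchanges the two quadratic factors, it is enough to show that $\Delta(v)$ is \emph{never} a square in $\Q$ for $v\in\Q\setminus\{0,1,1/2\}$. Granting this, both quadratic factors in the family are irreducible, which is the ``if'' half; and a trinomial of type $(1,1,1,2)$ would, after scaling, equal $(x-1)(x-a)(x-b)Q(x)$ with $Q$ an irreducible quadratic, hence --- regrouped as $(x-1)\bigl((x-a)(x-b)\bigr)Q(x)$ --- a member of the family above with one quadratic factor reducible, which forces $\Delta$ to be a rational square at a permissible parameter, a contradiction. (The borderline configurations are disposed of directly: a double root at $x=1$ forces $A=-5$, where $x^5-5x+4=(x-1)^2(x^3+2x^2+3x+4)$ has type $(1,1,3)$; and a repeated rational root of the cofactor of $x-1$ would make some $\Delta(v)$ equal to $0$.)

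Thus the crux is to find all rational solutions of $\Delta(v)=w^2$. Clearing the denominator and substituting $t=1-2v$ (so $v=(1-t)/2$) turns this into
\[
C:\qquad Y^2=-t^4-5t^2-10t=-t(t^3+5t+10);
\]
since $t^3+5t+10$ is separable and nonzero at $t=0$, $C$ is a smooth genus-$1$ curve. Using the rational point $(t,Y)=(0,0)$ one puts $C$ in Weierstrass form $\eta^2=\xi^3-5\xi^2-100$ (with $\xi=-10/t$), and a standard computation (descent, or the usual software) shows this elliptic curve has Mordell--Weil group $\Z/3\Z$, its three rational points being the origin and $(\xi,\eta)=(10,\pm 20)$. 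These pull back on $C$ to $(t,Y)=(0,0)$ and $(-1,\pm 2)$, i.e.\ to the excluded values $v=1/2$ and $v=1$. Hence $\Delta(v)$ is never a rational square for a permissible $v$, and both parts of the theorem follow. The one genuine obstacle is this last step --- exhibiting the Weierstrass model and nailing down its (rank-zero) Mordell--Weil group; everything else is routine polynomial algebra.
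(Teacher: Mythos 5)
Your proof is correct and takes essentially the same route as the paper: scale so the linear factor is $x-1$, solve the coefficient equations to obtain the one-parameter family, and reduce irreducibility of the quadratic factors to a square condition defining a rank-zero elliptic curve with torsion of order $3$ whose rational points give only the excluded values $v=1/2,\,1$ (your model $\eta^2=\xi^3-5\xi^2-100$ is isomorphic to the paper's minimal model $y^2+xy+y=x^3-x-2$). Your extra remarks on the $v\mapsto 1-v$ symmetry and on deducing the $(1,1,1,2)$ impossibility by regrouping two linear factors into a (reducible) quadratic merely make explicit what the paper leaves implicit.
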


\begin{proof}
By scaling we suppose the linear factor $x-1$, with
\begin{equation*}
x^5+A x+B=(x-1)f_{1}(x)f_{2}(x)=(x-1)(x^2+p x+q)(x^2+v x+w).
\end{equation*}
Comparing coefficients of powers of $x$, we get the system of equations
\[ p+v-1=0, \quad (v-1)p+q-v+w=0, \quad (v-w)p-(v-1)q+w=0, \]
\[ A+p w+(v-w)q=0, \qquad B+q w=0. \]
Solving for $p,q,w,A,B$ we get $A,B$ as given in the statement of the Theorem
and
\[ p=1-v, \quad q=\frac{v(v^2-2v+2)}{2v-1}, \quad w=\frac{(v-1)(v^2+1)}{2v-1}. \]
It remains to show that the quadratics $f_i(x)$ are irreducible.
Without loss of generality, we show that $f_2(x)$ is irreducible.
The discriminant of $f_2(x)$ is $v^2-4w=-(2v^3-3v^2+4v-4)/(2v-1)$, so
$f_2(x)$ is reducible if and only if
\[ -(2v-1)(2v^3-3v^2+4v-4) = \Box. \]
This latter is the equation of an elliptic curve with minimal model
$y^2+x y+y=x^3-x-2$, and rank 0 over $\Q$. The torsion group is of
order 3, whose points lead to $A B=0$. Hence there is no non-trivial
specialization of $v$ which leads to the reducibility of $f_2(x)$, and the
Theorem follows.
\end{proof}

\begin{thm}\label{quintype2}
There are no trinomials $x^5+A x^2+B$ with reducibility type $(1,2,2)$ or
$(1,1,1,2)$.
\end{thm}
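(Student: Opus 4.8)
The plan is to treat both reducibility types simultaneously by proving the single statement that, for $AB\neq0$, the quintic $x^5+Ax^2+B$ has no factorization
\[ x^5+Ax^2+B=(x-1)(x^2+px+q)(x^2+vx+w),\qquad p,q,v,w\in\Q. \]
This does cover both cases: a trinomial of type $(1,2,2)$ or $(1,1,1,2)$ has a rational root (nonzero, since $B\neq0$), which after the permitted scaling we may take to be $1$, and its quartic cofactor then factors over $\Q$ into two monic quadratics --- for type $(1,1,1,2)$ one simply multiplies together the two remaining linear factors.

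First I would compare coefficients of $x^4$, $x^3$ and $x^1$ in this identity; these give successively $v=1-p$, then $q+w=p^2-p+1$, then $q(w+p-1)=pw$ (the coefficients of $x^2$ and $x^0$ only serve to read off $A$ and $B=-qw$). If $w+p-1=0$ then $pw=0$, and the two resulting sub-cases $w=0$ and $p=0$ each force $B=-qw=0$, which is excluded. Otherwise $q=pw/(w+p-1)$, and substituting into $q+w=p^2-p+1$ gives
\[ w^2-(p-1)(p-2)\,w-(p-1)(p^2-p+1)=0; \]
its discriminant with respect to $w$ works out to $p(p-1)(p^2-p+4)$, so a rational $w$ (hence a rational $q$) can exist only if
\[ y^2=p(p-1)(p^2-p+4) \]
for some $y\in\Q$.

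The quartic $p(p-1)(p^2-p+4)$ has four distinct roots, so this is a curve of genus $1$, and as it carries the rational point $(0,0)$ it is an elliptic curve; a Weierstrass model is $\eta^2=X^3+5X^2+8X+16=(X+4)(X^2+X+4)$, which is $2$-isogenous to $y^2=x(x-1)(x+15)$ and has conductor $15$. The essential point --- exactly analogous to the elliptic-curve step in the proof of Theorem~\ref{quintype1} --- is to check that this curve has rank $0$ over $\Q$ (for instance via the descent attached to its rational $2$-torsion point) and that its Mordell--Weil group is $\Z/4\Z$, so that the only rational points on $y^2=p(p-1)(p^2-p+4)$ are $(0,0)$, $(1,0)$, and the two points at infinity. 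Granting this, $p=0$ forces $q=0$ (from $q+w=1$ and $q(w-1)=0$) and $p=1$ forces $w=0$ (from $qw=w$), so $B=-qw=0$ in each case, contradicting $AB\neq0$, while the points at infinity correspond to no genuine polynomial. Hence no factorization of the displayed shape exists, and $x^5+Ax^2+B$ has neither type $(1,2,2)$ nor type $(1,1,1,2)$.

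The only real obstacle is this last step: choosing a workable model for the curve $y^2=p(p-1)(p^2-p+4)$ and establishing that its rank is $0$, together with pinning down the torsion. The rest is routine manipulation of the coefficient equations and disposal of a handful of degenerate specializations.
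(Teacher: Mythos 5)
Your proposal is correct and follows essentially the same route as the paper: the paper also reduces to a factorization $(x-1)(x^2+px+q)(x^2+rx+s)$, eliminates to a quadratic (in $q$ rather than $w$) whose discriminant is the same quartic $p(p-1)(p^2-p+4)$, and concludes via the conductor-15 elliptic curve (minimal model $y^2+xy+y=x^3+x^2$, isomorphic to your $\eta^2=X^3+5X^2+8X+16$) having rank $0$ and torsion $\Z/4\Z$, with all torsion points forcing $B=0$. Your rank/torsion claims match the paper's, so nothing further is needed.
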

\begin{proof}
By scaling, suppose that $x^5+A x^2+B=(x-1)(x^2+p x+q)(x^2+r x+s)$. Equating coefficients of $x^4$, $x^3$, $x$:
\[  p+r-1=0, \quad q+p r+s-p-r=0, \quad q s-q r-p s=0. \]
On eliminating $r,s$, it follows that
\[ q^2 - q (p+p^2) + p-p^2+p^3 = 0, \]
whence
\[ (p+p^2)^2-4(p-p^2+p^3) = \Box, \]
the equation of an elliptic curve with minimal model $y^2+x y+y=x^3+x^2$,
of rank 0, and with torsion group $\Z/4\Z$, generated by $(x,y)=(0,0)$.
The four torsion points lead to $B=0$.
\end{proof}

\begin{cor}\label{cor1}
The equation (representing a curve of genus 3)
\begin{equation*}
G(p,q,r): (p+q)(q+r)(r+p)(p+q+r)-(p q+q r+r p)^2=0
\end{equation*}
has only trivial rational solutions i.e. those with $pqr=0$.
\end{cor}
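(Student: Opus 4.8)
The plan is to recognise the equation $G(p,q,r)=0$ as precisely the condition for the cubic $(x-p)(x-q)(x-r)$ to be a factor of a quintic trinomial $x^5+Ax^2+B$, and then to reduce to Theorem~\ref{quintype2}. Writing $e_1=p+q+r$, $e_2=pq+qr+rp$, $e_3=pqr$, I would multiply $(x-p)(x-q)(x-r)=x^3-e_1x^2+e_2x-e_3$ by a generic quadratic $x^2+ux+v$ and impose that the coefficients of $x^4$, $x^3$ and $x$ vanish. The first two conditions give $u=e_1$ and $v=e_1^2-e_2$, and the third then reads $e_1^2e_2-e_1e_3-e_2^2=0$; since $(p+q)(q+r)(r+p)=e_1e_2-e_3$, this is exactly $G(p,q,r)=0$. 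So every rational point on $G=0$ produces, for suitable $A,B\in\Q$, a factorization
\[ x^5+Ax^2+B=(x-p)(x-q)(x-r)\bigl(x^2+e_1x+(e_1^2-e_2)\bigr). \]

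Assuming for contradiction a rational point with $pqr\ne0$, the generic situation is that $AB\ne0$ and the companion quadratic is irreducible over $\Q$; then, after the scaling (as in the Introduction) that sends one of $p,q,r$ to $1$, the displayed identity exhibits $x^5+Ax^2+B$ with reducibility type $(1,1,1,2)$, which Theorem~\ref{quintype2} forbids. This is the step that carries the substance of the result, and it is already proved there via the rank-$0$ elliptic curve $y^2+xy+y=x^3+x^2$. What is left is the degenerate cases, and checking that they all collapse to $pqr=0$ is where the remaining care lies.

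For the degeneracies I would argue as follows. If $v:=e_1^2-e_2=0$, then $G=0$ forces $e_1e_3=0$, hence $e_1=0$ (as $e_3\ne0$) and so $e_2=0$, making $p,q,r$ the three cube roots of $e_3$, which are rational only when $e_3=0$ — a contradiction. If $A=0$ (so $B\ne0$), then $x^5+B$ would have the three rational roots $p,q,r$, impossible since $x^5+B$ is separable with at most one rational root when $B\ne0$. If instead the companion quadratic is reducible over $\Q$, then $x^5+Ax^2+B$ splits into five rational linear factors, and the vanishing of its $x^4$- and $x^3$-coefficients makes the first two power sums of these roots vanish, whence $\sum\rho_i^2=0$ and all roots are $0$ — again contradicting $pqr\ne0$. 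Thus $pqr=0$ in every case.

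An alternative, self-contained route avoids Theorem~\ref{quintype2}: when $e_1\ne0$ scale so $e_1=1$, so that $G=0$ becomes $e_3=e_2-e_2^2$ and $p,q,r$ are roots of $t^3-t^2+e_2t+(e_2^2-e_2)$; a rational root $t$ then forces $(1-t)(4t^2-t+1)$ to be a rational square, which is an affine model of an elliptic curve one checks (substituting $t=-x$) to be literally $y^2+xy+y=x^3+x^2$, of rank $0$ with $4$ torsion points, each of which returns $pqr=0$; the case $e_1=0$ is the cube-root argument above. Either way, I expect the only genuine obstacle to be this final bookkeeping — verifying that every torsion point and every degenerate configuration really does force $pqr=0$.
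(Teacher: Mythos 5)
Your proposal is correct and follows essentially the same route as the paper: interpret a nontrivial solution of $G(p,q,r)=0$ as giving a quintic trinomial $x^5+Ax^2+B$ with the three rational roots $p,q,r$, and contradict Theorem~\ref{quintype2}. The only differences are cosmetic — you build the quadratic cofactor via the symmetric functions $e_1,e_2,e_3$ (so $G$ appears as the vanishing $x$-coefficient), whereas the paper interpolates the explicit trinomial $H(X)$ through $p,q$ after first ruling out $|p|=|q|$, and you spell out the degenerate cases ($A=0$, $B=0$, split cofactor) that the paper handles implicitly; your checks of these cases are sound.
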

\begin{proof}
Suppose $G(p,q,r)=0$ with $pqr \neq 0$. Certainly the absolute values of $p,q,r$ are distinct.
For if, without loss of generality, $p=q$, then $q(3q^3+6q^2 r+4q r^2+2r^3)=0$, so that $q=0$;
and if $p=-q$, then $q^4=0$, and again $q=0$.  Consider the trinomial
\begin{equation*}
H(X)=X^5-\frac{p^5-q^5}{p^2-q^2}X^2+\frac{p^2q^2(p^3-q^3)}{p^2-q^2}.
\end{equation*}
We have $H(p)=H(q)=0$ and $H(r)=(p-r)(q-r)G(p,q,r)/(p+q)=0$. This contradicts
Theorem \ref{quintype2}, since there are no trinomials $x^5+A x^2+B$ with three rational roots.
\end{proof}

\section{Reducibility type of sextic trinomials}

\begin{thm}\label{123type}
There are infinitely many trinomials $x^6+A x+B$ with reducibility type
$(1,2,3)$.
There are no trinomials $x^6+A x+B$ with reducibility type $(1,1,1,3)$.
\end{thm}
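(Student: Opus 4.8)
The statement splits into two parts, which I would prove separately. For the first, the plan is to parametrize: by scaling take the linear factor to be $x-1$ and write $x^6+Ax+B=(x-1)(x^2+px+q)(x^3+rx^2+sx+t)$. Comparing the vanishing coefficients of $x^5,x^4,x^3,x^2$ gives four equations; three of them solve linearly for $r,s,t$ in terms of $p,q$, namely $r=1-p$, $s=p^2-p+1-q$, $t=(2p-1)q-(p-1)(p^2+1)$, and the fourth collapses to
\[
q^2-(3p^2-2p+1)q+(p^4-p^3+p^2-p+1)=0 .
\]
This curve $C$ is irreducible, since its discriminant $5p^4-8p^3+6p^2-3$ is not a square in $\Q(p)$, and $q\in\Q$ is equivalent to $5p^4-8p^3+6p^2-3=\square$. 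The last equation carries rational points (e.g.\ $p=\pm1$), hence defines an elliptic curve $\mathcal E$; setting $p=1+1/w$, clearing denominators and scaling puts it in the form $\mathcal Y^2=X^3+12X^2+96X+320$, on which $p=-1$ gives the point $(X,\mathcal Y)=(-4,8)$.

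The heart of the matter is to show $(-4,8)$ has infinite order, so that $C(\Q)$ is infinite: I would do this by computing $3\cdot(-4,8)$, whose $X$-coordinate is $796/81\notin\Z$; by Nagell--Lutz this excludes torsion, so $\mathcal E$ has positive rank. It still has to be checked that, for infinitely many of these points, both nonlinear factors are irreducible, so that the factorization type is exactly $(1,2,3)$. For the quadratic this is automatic on the branch $q=\frac{1}{2}((3p^2-2p+1)+\sqrt{5p^4-8p^3+6p^2-3})$, where the discriminant of $x^2+px+q$ equals $-5p^2+4p-2-2\sqrt{5p^4-8p^3+6p^2-3}\le -6/5<0$; and this branch contains infinitely many rational points, being one of the two subsets of $C(\Q)$ interchanged by the $\Q$-involution $(p,q)\mapsto(p,(3p^2-2p+1)-q)$, which has only finitely many fixed points. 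The cubic is irreducible over the function field $\Q(C)$ — the specialization $x^6-56x+55=(x-1)(x^2-x+5)(x^3+2x^2-2x-11)$, coming from $p=-1,\ q=5$, already has an irreducible cubic factor — so by Hilbert's irreducibility theorem it stays irreducible for all but a thin subset of $C(\Q)$; since a branch of $C(\Q)$ is not thin (an automorphism carries it to the other branch, so both being thin would make $C(\Q)$ thin), infinitely many trinomials survive, and discarding the finitely many with $AB=0$ finishes the first part.

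For the second part I would argue directly. With $AB\ne0$, $f(x)=x^6+Ax+B$ has no root of multiplicity $\ge 3$, since such a root would be a common root of $f''=30x^4$, hence $0$, impossible as $B\ne0$. So if the type were $(1,1,1,3)$ the three linear factors would arise either (a) from three distinct simple rational roots, or (b) from a double rational root and a distinct simple one. In case (b), scaling the double root to $1$ forces $f(1)=f'(1)=0$, i.e.\ $A=-6$, $B=5$, and then $x^6-6x+5=(x-1)^2(x^4+2x^3+3x^2+4x+5)$; the quartic factor has no rational root and no factorization into rational quadratics, so it is irreducible and the type is $(1,1,4)$. In case (a), scale one root to $1$ and let $u,v$ be the other two; being roots, $1+A+B=0=u^6+Au+B$ gives $A=-\Phi(u)$, and likewise $A=-\Phi(v)$, where $\Phi(x)=1+x+x^2+x^3+x^4+x^5$. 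But $\Phi$ is strictly increasing on $\R$: indeed
\[
5\,\Phi'(x)=(5x^2+2x)^2+(11x^2+10x+5),
\]
and $11x^2+10x+5$ is positive definite. Hence $\Phi(u)=\Phi(v)$ forces $u=v$, contradicting distinctness, so no trinomial $x^6+Ax+B$ has type $(1,1,1,3)$.

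The main obstacle is concentrated in the first part: the rank computation for $\mathcal E$ is the crux (the assertion is simply false without positive rank), and alongside it one needs the bookkeeping sketched above to be sure that the quadratic and cubic factors do not collapse into smaller pieces for all but finitely many of the trinomials produced. The second part is by comparison elementary, once one spots the monotonicity of $\Phi$.
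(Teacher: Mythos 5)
Most of your argument for the first assertion is, after a change of notation, the paper's: your relation $q^2-(3p^2-2p+1)q+(p^4-p^3+p^2-p+1)=0$ is exactly the paper's equation $(3v^2-2v+1-2w)^2=(v-1)(5v^3-3v^2+3v+3)$, and your model $\mathcal{Y}^2=X^3+12X^2+96X+320$ is the paper's minimal model $y^2=x^3+3x+1$ under $X=4x-4$, with your point $(-4,8)$ corresponding to the paper's generator $(0,1)$. Your Nagell--Lutz certificate of infinite order is correct (the $X$-coordinate $796/81$ of $3\cdot(-4,8)$ checks out) and is a nice self-contained substitute for the paper's appeal to a rank computation; your branch-plus-involution argument for irreducibility of the quadratic is sound; and your proof of the second assertion (no multiplicity $\ge 3$, the explicit $(A,B)=(-6,5)$ double-root case, and strict monotonicity of $\Phi$ from $5\Phi'(x)=(5x^2+2x)^2+(11x^2+10x+5)$) is a correct alternative to the paper's one-line argument via Descartes' rule of signs (Theorem \ref{111n}).

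The genuine gap is the cubic-irreducibility step. Hilbert's irreducibility theorem cannot be invoked with base the genus-one curve $C$: for an elliptic curve of positive rank, $C(\Q)$ is itself a thin subset of $C$, being the finite union over coset representatives $P_i$ of $2C(\Q)$ of the images of $C(\Q)$ under the degree-$4$ maps $Q\mapsto 2Q+P_i$. Hence ``the bad set is thin, but a branch of $C(\Q)$ is not thin'' is not a valid dichotomy --- every subset of $C(\Q)$ is thin in $C$, and there is no Hilbert property for abelian varieties; indeed degree-$\ge 2$ covers of elliptic curves (isogenies, unramified covers of positive rank) can carry infinitely many rational points mapping onto infinitely many points of the base, so generic irreducibility over $\Q(C)$ plus your specialization at $(p,q)=(-1,5)$ does not by itself bound the reducible specializations. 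What is needed is the paper's argument: a reducible cubic forces a rational root $x_0$, hence a rational point on the cover of $C$ cut out by $x_0^3+rx_0^2+sx_0+t=0$, which is a curve of genus $7$ (the analogous cover for the quadratic has genus $3$), so Faltings' theorem gives only finitely many bad points; equivalently, one must check that the relevant cover of the genus-one curve is ramified, hence of genus $\ge 2$. With that replacement (your quadratic step can stay as is, or be absorbed into the same genus argument), your proof is complete.
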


\begin{proof}
By scaling, suppose that $x^6+A x+B = (x-1)(x^2+v x+w)(x^3+p x^2+q
x+r)$. Equating coefficients of $x^5$, $\ldots$, $x^2$:
\[ p+v-1=0, \qquad q-p+p v-v+w=0, \qquad r-q+q v-p v+p w-w=0, \]
\[ -r+r v-q v+q w-p w=0; \]
and eliminating $p,q,r$:
\begin{equation}
\label{vweqn}
 (3v^2-2v+1-2w)^2 = (v-1)(5v^3-3v^2+3v+3).
\end{equation}
This latter is the equation of an elliptic curve $E$ with minimal model
$y^2=x^3+3x+1$, of rank 1 with generator $P(x,y)=(0,1)$.
Each multiple of $P$ pulls back to a trinomial of type $x^6+A x+B$
factoring into polynomials of degrees 1,2,3.
Now if the quadratic is reducible, then $v^2-4 w=\Box$, which
with (\ref{vweqn}) represents a curve of genus 3, so having only finitely
many points $v,w$.  The cubic factoring together with (\ref{vweqn})
represents a curve of genus 7 and so again, there are only finitely many
points $(v,w)$. Since the number of points on $E$ is infinite,
all but finitely many such points lead to trinomials with reducibility type $(1,2,3)$.
%Moreover, from the proof, it is easily seen that in fact all trinomials
%of this type arise in this way.
As an example, the point $2P=(9/4, -35/8)$ pulls back to the trinomial
\begin{equation*}
x^6-19656x+82655=(x-5)(x^2+13x+61)(x^3-8x^2+68x-271).
\end{equation*}
%\bigskip
The second assertion of the Theorem follows from Theorem \ref{111n} below.
\end{proof}

%\begin{rem}
%{\rm We are tried to use the method of reduction in order to prove
%other results related to nonexistence of certain types of reducible
%trinomials where at least two irreducible factors have degree $\geq
%2$. Unfortunately, we were unable to do so. We think that the main
%reason of our failure is the following. If we look for trinomials
%with type of reducibility not of the form $(1,\ldots,1,m)$ for some
%$m$ then then the quantities which appear during investigations are
%of two types: nonhomogeneous polynomials (in case of the trinomial
%$x^n+ax+b$) or rational functions (in other cases). In former case
%homogenization leads to varietes which have rational points over
%small finite fields. In the latter case the denominators which could
%vanish are problematic and it is unclear what can be done in this
%circumstances. However, it should be noted that with the use of the
%same method one can prove that if $n\equiv3\pmod{4}$ and $n\geq 7$
%then there are no trinomials $x^n+ax+b$ with type of reducibility
%$(1,1,1,1,n-4)$. With a bit more work one can prove that if
%$n\equiv4\pmod{8}$ and $n\geq 12$ then there are no trinomials
%$x^n+ax+b$ with type of reducibility $(1,1,1,1,1,n-5)$. However it
%is unclear, how this method can be extended in order to prove that
%existence of an integer $m$ such that for any $n\geq m$ there are no
%trinomials $x^n+ax+b$ with type of reducibility $(1,\ldots,1,n-m)$
%where in the bracket we have $m$ occurences of 1.

\begin{thm}\label{222type}
There are no trinomials $x^6+A x+B$ with reducibility type $(1,1,2,2)$ or $(2,2,2)$.
\end{thm}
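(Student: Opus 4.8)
The plan is to treat the two types separately, in each case converting the existence of the claimed factorization into the existence of a rational point on an explicit curve, which we then rule out, in the spirit of the proofs of Theorems \ref{quintype1}, \ref{quintype2} and \ref{123type}.

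Consider first type $(1,1,2,2)$. After scaling we may take one rational root to be $1$ and call the other $u$, with $u\neq 0,1$ since $AB\neq 0$. Forcing $(x-1)(x-u)$ to divide $x^{6}+Ax+B$ and imposing vanishing of the coefficients of $x^{5},\dots,x^{2}$ determines $A$ and $B$ as polynomials in $u$ and exhibits the quartic cofactor $Q_{u}(x)=\bigl(x^{6}+Ax+B\bigr)/\bigl((x-1)(x-u)\bigr)$, whose coefficients turn out to be the partial sums $1+u+\cdots+u^{j}$. A genuine type $(1,1,2,2)$ factorization exists exactly when $Q_{u}=(x^{2}+px+q)(x^{2}+rx+s)$ for some $p,q,r,s\in\Q$; since such a $Q_{u}$ has no rational root the two quadratics are automatically irreducible, and matching coefficients gives four equations in $p,q,r,s,u$. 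Eliminating $q,r,s$ leaves a plane curve $\mathcal{C}_{1}$ in $p$ and $u$. I then determine a model for $\mathcal{C}_{1}$, compute its genus, and describe all of its rational points -- anticipating genus $0$, $1$ or $2$ (handled respectively by rational parametrization, by a Mordell--Weil computation exhibiting rank $0$, or by Chabauty's method) -- and finally check that every such point forces $AB=0$, or forces $Q_{u}$ to acquire a rational root so that the type is really of the form $(1,1,1,\dots)$, which is excluded by Theorem \ref{111n}.

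For type $(2,2,2)$ there is no rational root. Writing the three quadratic factors as $x^{2}-s_{i}x+n_{i}$ ($i=1,2,3$), the coefficient conditions say precisely that the first four power sums of the six roots vanish, $p_{1}=p_{2}=p_{3}=p_{4}=0$. Here $p_{1}=s_{1}+s_{2}+s_{3}$ and, for $k=2,3$, $p_{k}$ is affine-linear in $n_{1},n_{2},n_{3}$; so $p_{1}=0$ gives $s_{3}=-(s_{1}+s_{2})$, and then $p_{2}=p_{3}=0$ solve linearly for $n_{1},n_{2}$ in terms of $s_{1},s_{2},n_{3}$. Substituting into the remaining relation $p_{4}=0$, which is quadratic in the $n_{i}$, leaves a single equation in $s_{1},s_{2},n_{3}$, quasi-homogeneous under the trinomial scaling $(s_{i},n_{i})\mapsto(\lambda s_{i},\lambda^{2}n_{i})$; dehomogenising gives a plane curve $\mathcal{C}_{2}$. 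As above I would find a model for $\mathcal{C}_{2}$, compute its genus, determine all of its rational points, and verify that each yields $AB=0$ -- a degenerate ``factorization'' -- which is exactly how the torsion points were discarded in Theorems \ref{quintype1} and \ref{quintype2}.

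The main obstacle is the elimination step and its consequences: the curves $\mathcal{C}_{1}$ and $\mathcal{C}_{2}$ that emerge can have fairly large degree, so one must identify them precisely enough to compute the genus and, when the genus is $1$ or $2$, carry out a rank computation (plus a Chabauty or Mordell--Weil sieve argument in the genus $2$ case) showing that only the trivial points survive; very possibly each $\mathcal{C}_{i}$ breaks up into, or covers, several elliptic curves, each of rank $0$ whose torsion forces $AB=0$, just as happened for the quintic trinomials. A secondary subtlety is that a rational point on $\mathcal{C}_{i}$ a priori only yields a factorization into quadratics that need not be irreducible, so one must separate off the sub-loci where a quadratic factor acquires a rational root; those belong to reducibility types with more linear factors and are dealt with elsewhere.
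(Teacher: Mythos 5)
Your proposal stops exactly where the actual difficulty of this theorem begins. Both halves of your argument end with the step ``determine a model for $\mathcal{C}_i$, compute its genus, and describe all of its rational points,'' but that step is the entire content of the proof, and your anticipated outcomes (genus $0$, or an elliptic curve of rank $0$, or a genus-$2$ curve amenable to classical Chabauty, ``very possibly \dots several elliptic curves, each of rank $0$'') do not occur. Carrying out the elimination (the paper does it once, for a factorization into three monic quadratics $(x^2+px+q)(x^2+rx+s)(x^2+ux+v)$, which covers both types simultaneously since in type $(1,1,2,2)$ the two linear factors combine into one possibly-reducible quadratic) leads, after requiring the discriminant of a quadratic in $v$ to be a square, to the genus-$2$ curve $C\colon Y^2=X^6-3X^4Z^2+51X^2Z^4+15Z^6$. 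Its two obvious elliptic quotients over $\Q$, namely $y^2=x^3-3x^2+51x+15$ and $y^2=15x^3+51x^2-3x+1$, each have rank $1$, so the Jacobian has rank $2$ and neither the rank-$0$ torsion argument of Theorems \ref{quintype1} and \ref{quintype2} nor classical Chabauty over $\Q$ applies. The paper has to work in the cubic field $K=\Q(\theta)$ with $\theta^3+3\theta+1=0$, factor $X^2-(4\theta+1)Z^2$ out of the norm form, bound the ideal and unit contributions ($2^{3i}3^j5^k=\square$ forces them to be trivial), split into four sign/unit cases, eliminate two by local solvability at the prime above $3$, and finish the remaining two rank-$1$ curves over $K$ by elliptic Chabauty, finding only points with $A=0$. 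None of this machinery, nor any substitute for it, is present in your proposal, so the theorem is not proved.

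Two secondary points. First, treating $(1,1,2,2)$ separately from $(2,2,2)$ duplicates work for no gain: the paper's single three-quadratic setup subsumes your $(x-1)(x-u)Q_u(x)$ analysis, and your appeal to Theorem \ref{111n} to rule out a rational root of $Q_u$ is then unnecessary. Second, your exclusion ``$u\neq 0,1$ since $AB\neq 0$'' is not justified: a double rational root is compatible with $AB\neq 0$ (e.g.\ $x^6-6x+5=(x-1)^2(x^4+2x^3+3x^2+4x+5)$), and a repeated linear factor is a legitimate instance of type $(1,1,2,2)$, so the case $u=1$ cannot be discarded on those grounds; this is minor compared with the missing determination of rational points, but it would need fixing in any completed write-up.
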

\begin{proof}
Suppose $x^6+A x+B = (x^2+p x+q)(x^2+r x+s)(x^2+u x+v)$.
Equating coefficients of $x^5$,...,$x^2$:
\begin{align*}
-p-r-u=0,& \qquad -p(u+r)-q-u r-s-v=0, \\
-(s+r u+v)p-(r+u)q-v r-u s=0,& \quad -(s u+r v)p-(s+r u+v)q-v s=0,
\end{align*}
and eliminating $p,q,s$ results in
\begin{align*}
3(r^2+r u+u^2) v^2 + & 2(r^4+2r^3 u-3r^2 u^2-4r u^3-2u^4) v \\
 & - (r^6+3r^5 u+5r^4 u^2 + 5r^3 u^3-2r u^5-u^6) = 0.
\end{align*}
The discriminant in $v$ must be a perfect square, leading to
\[ (2r+u)^2(r^6 + 3 r^5 u + 3 r^4 u^2 + r^3 u^3 + 3 r^2 u^4 + 3 r u^5 + u^6) = \Box. \]
The restriction $u=-2r$ leads to trivial solutions, and thus $(X,Z)=(u+2r,u)$ gives a point
on the genus 2 curve
\[ C: Y^2 = X^6 - 3 X^4 Z^2 + 51 X^2 Z^4 + 15 Z^6. \]
$C$ covers two obvious elliptic curves,
\[ E_1: y^2=x^3-3 x^2+51 x+15, \qquad E_2: y^2 = 15 x^3+51 x^2 -3 x+1, \]
but each $E_i$ is of rank 1 over $\Q$, so there is no simple way to compute
the finitely many rational points on $C$. However, we can argue as follows, and show that the only
rational points on $C$ are $(\pm X,\pm Y,Z)=(1,1,0)$, $(1,8,1)$.
Set $K={\Q}(\tht)$, $\tht^3+3\tht+1=0$. The ring of integers is $\Z[\tht]$; a fundamental unit
is $\ep=\tht$; and the class number is 1. We have factorizations into prime ideals as follows:
\[ (2), (3)=p_3^3, (5)=p_5^2 p_5', \]
with $p_3=(2+\tht^2)$, $p_5=(1+\tht^2)$, $p_5'=(4+\tht^2)$.
The equation becomes
\[ \mbox{Norm}_{K/{\Q}}(X^2 -(4\tht+1)Z^2) = Y^2, \]
equivalently,
\[ (X^2-(4\tht+1)Z^2) (X^2+(-2\tht^2-4)X Z+(2\tht^2+7)Z^2) (X^2+(2\tht^2+4)X Z+(2\tht^2+7)Z^2) = Y^2. \]
The gcd $(X^2 -(4\tht+1)Z^2, X^4+(4\tht-2)X^2 Z^2+(16\tht^2-4\tht+49)Z^4)$ divides $2^4 p_3^3 p_5$.
So
\[ (X^2 -(4\tht+1)Z^2) = 2^i p_3^j p_5^k \square, \qquad i,j,k=0,1; \]
and taking norms, $2^{3i} 3^j 5^k=\square$: so $i=j=k=0$.
Hence
\[ X^2 -(4\tht+1)Z^2 = \pm \epsilon^{-a} y_1^2, \qquad X^4+(4\tht-2)X^2 Z^2+(16\tht^2-4\tht+49)Z^4 = \pm \epsilon^a y_2^2, \]
for $a=0,-1$, and $y_1,y_2$ integers in $K$ with $y_1 y_2=y$.\\
Case of $+$ sign, $a=-1$:
\[ X^2 -(4\tht+1)Z^2 = \epsilon y_1^2, \qquad X^4+(4\tht-2)X^2 Z^2+(16\tht^2-4\tht+49)Z^4 = \epsilon^{-1} y_2^2, \]
and the latter is not locally solvable at $p_3$.\\
Case of $-$ sign, $a=0$:
\[ X^2 -(4\tht+1)Z^2 = - y_1^2, \qquad X^4+(4\tht-2)X^2 Z^2+(16\tht^2-4\tht+49)Z^4 = - y_2^2, \]
and the latter is not locally solvable at $p_3$.\\
Case of $+$ sign, $a=0$:
\[ X^2 -(4\tht+1)Z^2 = y_1^2, \qquad X^4+(4\tht-2)X^2 Z^2+(16\tht^2-4\tht+49)Z^4 = y_2^2, \]
and the latter is an elliptic curve of rank 1 over ${\Q}(\tht)$. The rank is smaller than $[K:\Q]$,
and the elliptic Chabauty routines in Magma~\cite{Mag} work effectively to show that the curve
has precisely one point with $X:Z$ rational, namely $(X,Z)=(1,0)$, with $(y_1,y_2)=(1,1)$.
This leads to $A=0$.\\
Case of $-$ sign, $a=-1$:
\[ X^2 -(4\tht+1)Z^2 = -\epsilon y_1^2, \qquad X^4+(4\tht-2)X^2 Z^2+(16\tht^2-4\tht+49)Z^4 = -\epsilon^{-1} y_2^2, \]
and the latter is an elliptic curve of rank 1 over ${\Q}(\tht)$. As
above, elliptic Chabauty techniques show there is precisely one pair
of points with $X:Z$ rational, namely $(X,\pm Z)=(1,1)$, with
$(y_1,y_2)=(2,4)$. These points pull back to $A=0$.
\end{proof}

\begin{thm}\label{222type} \hfill
\begin{enumerate}
\item There are no trinomials $x^6+A x^2+B$ with reducibility type $(1,2,3)$.
\item If $x^6+A x^2+B$ has reducibility type $(2,2,2)$ then either
\begin{equation*}
(A,B)=(-s^2-s v-v^2,\;-s v(s+v)), \qquad s,v \in \Q, \quad s v(s+v) \neq 0,
\end{equation*}
with
\[ x^6+A x^2+B = (x^2+s)(x^2+v)(x^2-s-v); \]
or, up to scaling,
\begin{equation*}
(A,B)=(-(v-1)(3v-1),\;-v^2(2v-1)), \quad v\in\Q, \; v \neq 0,1,1/2,1/3.
\end{equation*}
with
\[ x^6+A x^2+B = (x^2+x+v)(x^2-x+v)(x^2-2v+1). \]
\end{enumerate}
\end{thm}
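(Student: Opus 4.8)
The plan is to exploit that $f(x)=x^6+Ax^2+B$ is an \emph{even} polynomial, so the substitution $\sigma\colon g(x)\mapsto g(-x)$ fixes $f$ and hence permutes the multiset of monic irreducible factors of $f$, with $\sigma^2=\mathrm{id}$. For assertion (1): if $f$ had reducibility type $(1,2,3)$ it would possess a rational root $a$, and $a\neq 0$ since $B\neq 0$; but then $-a$ is also a root, so the factor $x+a$, distinct from $x-a$, divides $f$, whence $f$ has at least two linear factors — contradicting type $(1,2,3)$.

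For assertion (2), write $f=f_1f_2f_3$ with the $f_i$ monic irreducible quadratics. Since $\sigma$ restricts to an involution of $\{f_1,f_2,f_3\}$, and an involution of a three-element set is either the identity or a transposition, either (Case A) $\sigma$ fixes each $f_i$, or (Case B, after relabelling) $\sigma$ fixes $f_3$ and interchanges $f_1$ and $f_2$; the possibilities in which two of the $f_i$ coincide are quickly seen to fall under Case A (if $f_1=f_2$ then $f_2(x)=f_1(-x)$ would force $f_1$ even). In Case A each $f_i$ is even, say $f_i=x^2+c_i$; comparing coefficients of $x^4$ in $f$ forces $c_1+c_2+c_3=0$, so with $s=c_1$, $v=c_2$ one gets $c_3=-(s+v)$ and, on multiplying out, $(A,B)=(-s^2-sv-v^2,\,-sv(s+v))$, the constraint $sv(s+v)\neq 0$ being equivalent to $B\neq 0$; this is the first displayed family.

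In Case B we have $f_3=x^2+c$ even and $f_2(x)=f_1(-x)$; writing $f_1=x^2+bx+d$, necessarily $b\neq 0$ (otherwise $f_1$ would be $\sigma$-fixed), and then $f=\bigl(x^4+(2d-b^2)x^2+d^2\bigr)(x^2+c)$. Vanishing of the coefficient of $x^4$ gives $c=b^2-2d$, whence a direct computation yields $A=d^2-(b^2-2d)^2$ and $B=(b^2-2d)d^2$. The scaling $x\mapsto bx$ (legitimate because $b\neq 0$) normalizes $b=1$; setting $v=d/b^2$ then produces exactly the second displayed family, with $v\neq 0,1/2$ coming from $B\neq 0$ and $v\neq 1,1/3$ from $A\neq 0$. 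Since only a necessary condition is claimed, no irreducibility test on the resulting quadratics is required.

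I expect no genuine obstacle here: once the action of $\sigma$ on the irreducible factors is understood, everything reduces to elementary coefficient comparison. The only points demanding a little care are checking that the degenerate cases (coincidences among the $f_i$) are absorbed into Case A, and handling the scaling in Case B so that the normalization $b=1$ — and hence the ``up to scaling'' in the statement — is legitimate.
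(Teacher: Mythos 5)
Your proposal is correct. For part (1) you use exactly the paper's argument: a nonzero rational root $a$ forces the second rational root $-a$, contradicting type $(1,2,3)$. For part (2), however, your route is genuinely different. The paper writes $x^6+Ax^2+B=(x^2+px+q)(x^2+rx+s)(x^2+ux+v)$, equates the coefficients of $x^5,x^4,x^3,x$, eliminates $p,q,s$ to obtain $ru(r+u)F(r,u,v)=0$ with $F$ quadratic in $v$, forces the discriminant $-8(2r+u)^2(r^2+ru+u^2)$ of $F$ to be a square (so $2r+u=0$, yielding the all-even family), and then chases the residual cases $ru=0$, $r=-u$ down to the second family. You instead exploit evenness via the involution $g(x)\mapsto g(-x)$ acting on the three monic irreducible quadratic factors: either every factor is even, giving $(x^2+s)(x^2+v)(x^2-s-v)$ with $s+v+c_3=0$ from the $x^4$-coefficient, or one factor is even and the other two are $x^2\pm bx+d$ with $b\neq 0$, and then the vanishing of the $x^4$-coefficient together with the scaling $\lambda=b$ (legitimate under the paper's scaling convention, since $(A,B)\mapsto(A\lambda^{-4},B\lambda^{-6})$ carries $x^2+bx+d$ to $x^2+x+d/b^2$) gives exactly the second family; the exclusions $v\neq 0,1,1/2,1/3$ are just $AB\neq 0$, and since only a necessary condition is asserted, skipping irreducibility checks matches what the paper does. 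The only delicate point in your argument is the repeated-factor case, which you flag; it is indeed quickly disposed of, e.g.\ by noting that unique factorization preserves the multiset of irreducible factors, so equal factors can only be fixed by the involution, forcing them (and hence their cofactor) to be even. Your symmetry argument is shorter and more conceptual, avoiding the elimination and discriminant computation, and it is the same evenness idea the paper itself invokes for part (1); the paper's computational treatment buys uniformity with the elimination method used throughout the rest of the paper, but nothing essential beyond that here.
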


\begin{proof}
The first part of the Theorem is a simple consequence of the fact
that if $f(u)=0$ for some $u\in\Q\setminus\{0\}$ then $f(-u)=0$ and
thus we have two rational roots. \\
In order to prove the second part of the Theorem, suppose
\[ x^6+A x^2+B = (x^2+px+q)(x^2+rx+s)(x^2+u x+v). \]
Equating coefficients of $x^5$, $x^4$, $x^3$, $x$:
\[ -p-r-u=0, \qquad -p(u+r)-q-u r-s-v=0, \]
\[ -(s+r u+v)p-(r+u)q-v r-u s=0, \qquad -q s u-q r v-p s v=0. \]
Eliminating $p,q,s$:
\[ r u(r + u)(r^4+2r^3 u+3r^2 u^2+2r u^3+2u^4-2r^2 v-2r u v-8u^2 v+9v^2)=0. \]
The latter factor has discriminant in $v$ equal to $-8(2r+u)^2(r^2+r u+u^2)$,
which is a perfect square if and only if $2r+u=0$, leading to $p=r=u=0$, $q=-s-v$, and
$(A,B)=(-s^2-s v-v^2,-s v(s+v))$.
If instead $r u=0$, then symmetry allows us to take without loss of generality $r=0$. Then
$p=-u$, $q=-s+u^2-v$, and $u(s-u^2+2v)=0$.
The case $u=0$ results in the previous factorization; and the case $s=u^2-2v$ results in
$(A,B)=(-(u^2-v)(u^2-3v), v^2(u^2-2v))$, which on scaling so that $u=1$, gives the assertion in the Theorem.
Finally, the case $r=-u$ leads to the same factorization.
\end{proof}

\begin{thm}\label{222type63} \hfill
\begin{enumerate}
 \item If $x^6+A x^3+B$ has reducibility type $(2,2,2)$ then up to scaling
\begin{equation*}
A=-27u(u+1), \qquad B=27(u^2+u+1)^3, \quad u \neq 0,-1,-1/2,
\end{equation*}
with
\begin{align*}
&x^6 - 27u(u+1) x^3 + 27(u^2+u+1)^3 = \\
&(x^2+3x+3(u^2+u+1)) (x^2-3(u+1)x+3(u^2+u+1)) (x^2+3u x+3(u^2+u+1)).
\end{align*}
%\begin{equation*}
%x^6+A x^3+B=\frac{1}{27}(3x^2+3px+u^2+pu+p^2)(3x^2-3(p+u)x+u^2+pu+p^2)(3x^2+3ux+u^2+pu+p^2).
%\end{equation*}
\item If $x^6+A x^3+B$ has reducibility type $(1,1,2,2)$ then up to scaling
\begin{equation*}
A=-u^3-1,\quad B=u^3, \qquad u \neq 0,-1
\end{equation*}
and
\begin{equation*}
x^6+A x^3+B=(x-1)(x-u)(x^2+x+1)(x^2+u x+u^2).
\end{equation*}
\item There are no trinomials $x^6+A x^3+B$ with reducibility type $(1,1,1,1,2)$.
\item If $x^6+A x^3+B$ has reducibility type $(1,2,3)$, then up to scaling, $(A,B)=(t-1,-t)$,
where $t$ is not a cube, and
\[ x^6 + (t-1) x^3 - t = (x-1)(x^2+x+1)(x^3+t). \]
\end{enumerate}
\end{thm}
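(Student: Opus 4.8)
The common thread is the identity $x^{6}+Ax^{3}+B=h(x^{3})$ with $h(y)=y^{2}+Ay+B$, so everything is governed by how adjoining cube roots to the (at most two) roots of $h$ affects factorization over $\Q$. First I would record the basic dichotomy. If $h$ splits over $\Q$, say $h(y)=(y-\alpha)(y-\beta)$ with $\alpha\beta=B\neq0$, then $g:=x^{6}+Ax^{3}+B=(x^{3}-\alpha)(x^{3}-\beta)$, so the $\Q$-irreducible factorization of $g$ is the union of those of $x^{3}-\alpha$ and $x^{3}-\beta$; and for $c\in\Q^{*}$ the polynomial $x^{3}-c$ is irreducible when $c$ is not a cube and equals $(x-\sqrt[3]{c})(x^{2}+\sqrt[3]{c}\,x+(\sqrt[3]{c})^{2})$, with the quadratic irreducible of discriminant $-3(\sqrt[3]{c})^{2}<0$, when $c$ is a cube. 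Hence the reducibility type of $g$ is forced to be $(3,3)$, $(1,2,3)$, or $(1,1,2,2)$ according as $0$, $1$, or $2$ of $\alpha,\beta$ are cubes. If instead $h$ is irreducible over $\Q$, then $g$ has no rational root.

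Parts (2), (3) and (4) follow quickly, since each of those types contains a linear factor, forcing $h$ to split. Scaling so that a rational root (necessarily nonzero, as $g(0)=B\neq0$) equals $1$ gives $1+A+B=0$, i.e.\ $(A,B)=(t-1,-t)$ with $t=-B\notin\{0,1\}$, and $g=(x^{3}-1)(x^{3}+t)=(x-1)(x^{2}+x+1)(x^{3}+t)$. Then (4) holds exactly when the cubic factor $x^{3}+t$ is irreducible, i.e.\ $t$ is not a cube; (2) requires a second linear factor to come from $x^{3}+t$, i.e.\ $t=-u^{3}$, whence $x^{3}+t=(x-u)(x^{2}+ux+u^{2})$ and $(A,B)=(-u^{3}-1,u^{3})$ with $u\neq0,-1$; and (3) is impossible, because $x^{2}+x+1$ is always a $\Q$-irreducible degree-$2$ factor of $g$, so type $(1,1,1,1,2)$ would require $x^{3}+t$ to split into three linear factors, which it cannot, having at most one rational root.

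The substance is part (1). Here $g$ has no rational root, so $h$ is irreducible; let $\alpha$ be a root of $h$, $F=\Q(\alpha)$ the quadratic field it generates, $\bar\alpha$ the conjugate root, so that $g=(x^{3}-\alpha)(x^{3}-\bar\alpha)$ over $F$. The crucial point is that type $(2,2,2)$ forces $F=\Q(\sqrt{-3})$ and $\alpha$ a cube in $F$. Indeed, any root $\rho$ of $g$ lies in an irreducible quadratic factor, so $[\Q(\rho):\Q]=2$; but $\rho^{3}\in\{\alpha,\bar\alpha\}$ generates $F$, so $F=\Q(\rho^{3})\subseteq\Q(\rho)$ and hence $\Q(\rho)=F$. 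Thus all six roots of $g$ lie in $F$; in particular $\alpha=\gamma^{3}$ for some root $\gamma\in F$ of $g$, and then $\omega\gamma$ (with $\omega$ a primitive cube root of unity) is again a root of $g$, so $\omega=\omega\gamma/\gamma\in F$ and $F=\Q(\omega)=\Q(\sqrt{-3})$.

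It remains to parametrize. Writing $\gamma=p+q\sqrt{-3}\in\Q(\sqrt{-3})$, one has $q\neq0$ since $g$ has no rational root. The six roots of $g$ are $\omega^{j}\gamma$ and $\omega^{j}\bar\gamma$ for $j=0,1,2$, which complex conjugation pairs as $\{\gamma,\bar\gamma\}$, $\{\omega\gamma,\omega^{2}\bar\gamma\}$, $\{\omega^{2}\gamma,\omega\bar\gamma\}$; a direct computation then gives the three $\Q$-rational quadratic factors
\[
x^{2}-2px+N,\qquad x^{2}+(p+3q)x+N,\qquad x^{2}+(p-3q)x+N,\qquad N=p^{2}+3q^{2},
\]
together with $A=-(\gamma^{3}+\bar\gamma^{3})=-2p(p^{2}-9q^{2})$ and $B=N^{3}=(p^{2}+3q^{2})^{3}$. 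Their discriminants are $-12q^{2}$, $-3(p-q)^{2}$ and $-3(p+q)^{2}$, so the factorization is genuinely of type $(2,2,2)$ precisely when $q\neq0$ and $p\neq\pm q$. Finally, since the trinomial is determined up to scaling by the unordered triple $(-2p,\,p+3q,\,p-3q)$ of linear coefficients, which sums to $0$ and is not $(0,0,0)$, setting $u=(p-3q)/(p+3q)$ — or, if $p+3q=0$, first applying the $S_{3}$-symmetry that permutes the three factors — a short computation identifies this with $A=-27u(u+1)$, $B=27(u^{2}+u+1)^{3}$ up to scaling, together with the displayed factorization; the degeneracies $q=0$, $p=q$, $p=-q$ then become $u=1,-1/2,-2$, while $AB\neq0$ forces $u\neq0,-1$, so in particular $u\notin\{0,-1,-1/2\}$, as claimed (and conversely every admissible $u$ pulls back to a genuine type-$(2,2,2)$ trinomial). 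I expect the one real obstacle to be the field-theoretic step pinning down $F=\Q(\sqrt{-3})$, together with the modest bookkeeping of which values of $u$ must be excluded; everything else is routine computation.
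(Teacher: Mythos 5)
Your proof is correct, but it follows a genuinely different route from the paper. The paper treats all four parts by brute-force coefficient comparison: for (1)--(3) it writes $x^6+Ax^3+B=(x^2+px+q)(x^2+rx+s)(x^2+ux+v)$, equates coefficients and eliminates $r,s,v$ to reach the factored condition $(p^2-q)(p^2+pu+u^2)(p^2-3q+pu+u^2)(q+pu+u^2)=0$, then sorts through the cases; for (4) it repeats the elimination with a $(1,2,3)$ ansatz and kills the nontrivial branch with a discriminant argument. You instead exploit the structural identity $x^6+Ax^3+B=h(x^3)$, $h(y)=y^2+Ay+B$: when $h$ splits over $\Q$ the irreducible factorization is inherited from $x^3-\alpha$ and $x^3-\beta$, which settles (2), (3), (4) simultaneously from $g=(x^3-1)(x^3+t)$ after scaling a rational root to $1$; and for (1) you show type $(2,2,2)$ forces $h$ irreducible, all six roots to lie in the quadratic field $\Q(\rho)$, hence $\omega\in\Q(\rho)$ and $\Q(\rho)=\Q(\sqrt{-3})$, after which writing a cube root $\gamma=p+q\sqrt{-3}$ of a root of $h$ yields the three conjugate-paired quadratics $x^2-2px+N$, $x^2+(p\pm 3q)x+N$, $N=p^2+3q^2$, matching the paper's parametrization via $u=(p-3q)/(p+3q)$ and the scaling $\lambda=3/(p+3q)$ (the awkward case $p+3q=0$ in fact forces $A=0$, so your appeal to the $S_3$-symmetry is not even needed). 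Your argument is more conceptual: it explains why the three quadratic factors share a common constant term and why $\sqrt{-3}$ is the only field that can occur, and it avoids elimination entirely; the paper's method is more mechanical but uniform with the rest of its treatment of trinomials. Your bookkeeping of degenerate parameters ($q=0$, $p=\pm q$ mapping to $u=1,-1/2,-2$, and $AB\neq 0$ excluding $u=0,-1$) is consistent with, indeed slightly more complete than, the exclusions listed in the statement.
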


\begin{proof}
To prove the first two parts of the Theorem, suppose that
 $x^6+A x^3+B = (x^2+px+q)(x^2+rx+s)(x^2+ux+v)$ and equate
coefficients of $x^5$, $x^4$, $x^2$, $x$:
\[ p+r+u=0, \qquad p(u+r)+q+u r+s+v=0, \]
\[ q s+q r u+p s u+q v+p r v+s v=0, \qquad q s u+q r v+p s v=0. \]
Eliminating $r,s,v$:
\[ (p^2-q)(p^2+p u+u^2)(p^2-3q+p u+u^2)(q+p u+u^2)=0. \]
There are three cases:
\begin{enumerate}
\item If $q=p^2$, then either $v=u^2$ and on scaling so that $p=1$, we have the factorization
%\[ x^6 - (p^3+u^3) x^3 + p^3 u^3 = (x-p)(x-u)(x^2+p x+p^2)(x^2+u x+u^2); \]
in statement (2) of the Theorem;
or $v=-p^2-p u$,
% and we have the factorization
%\[ x^6 + u (3p^2+3p u+u^2)x^3 - p^3(p+u)^3 = (x-p)(x+p+u)(x^2+p x+p^2)(x^2-(p+u)x+(p+u)^2), \]
%the same as the first under change of variable.
which leads to the same factorization under change of variable.
\item If $q=\frac{1}{3}(p^2+p u+u^2)$ then either $v=\frac{1}{3}(p^2+p u+u^2)$
and on scaling to $p=3$, we have the factorization in statement (1) of the Theorem;
%\begin{align*}
%&x^6 - p u(p+u) x^3 + \frac{1}{27} (p^2+p u+u^2)^3 = \\
%&(x^2+p x+\frac{(p^2+p u+u^2)}{3})(x^2-(p+u)x+\frac{(p^2+p u+u^2)}{3})(x^2 +u x+\frac{(p^2+p u+u^2)}{3});
%\end{align*}
or $v=-\frac{2}{3}p^2-\frac{5}{3}p u+\frac{1}{3}u^2$, $p \in \{0,u,-1/2 u,-2u\}$, with corresponding factorizations
that either have $A=0$ or are special cases of the factorization in case (1).
%\[ x^6 + u^6/27 = (x^2+u^2/3) (x^2+u x+u^2/3)(x^2-u x+u^2/3); \]
%(no good, $a=0$)
%\[ x^6+7 u^3 x^3-8 u^6 = (x-u)(x+2u) (x^2+u x+u^2) (x^2-2u x+4u^2); \]
%(special case of (i) with $p=-2u$)
%\[ x^6-7/8 u^3 x^3 - u^6/8 = (x-u) (x+u/2) (x^2+u x+u^2) (x^2-u/2 x+u^2/4); \]
%(special case of (i) with $p=-1/2 u$)
%\[ x^6 - 2u^3 x^3 + u^6 = (x-u)^2 (x^2+u x+u^2)^2. \]
%(special case of (i) with $p=u$)
\item If $q=-p u-u^2$ then $v=u^2$ and
%we have the factorization
%\[ x^6 + p(p^2+3p u+3u^2) x^3 - u^3(p+u)^3 = (x-u)(x+p+u)(x^2+u x+u^2)(x^2-(p+u)x+(p+u)^2), \]
%a symmetry of the parametrization at (1).
again we have a symmetry of the factorization in case (1).
\end{enumerate}
The third part of the Theorem is immediate from the second part.\\
Finally, suppose $x^6+A x^3+B=(x-u)(x^2+p x+q)(x^3+r x^2+s x+t)$, and equate
coefficients:
\[ p + r - u=0, \qquad q + p r + s - p u - r u=0, \]
\[ -A + q r + p s + t - q u - p r u - s u=0, \qquad q s + p t - q r u - p s u - t u, \]
\[ q t - q s u - p t u=0, \qquad -B - q t u=0. \]
If $r=0$, it follows that $p=u$, $s=0$, $q=u^2$, and we derive the factorization
in the statement (4) of the Theorem.
%\[ x^6 + (t-u^3) x^3 - t u^3 = (x-u) (x^2+u x+u^2) (x^3+t); \]
If $r \neq 0$, then eliminating $p,q,t$,
\[ (-s + r u) (r^4-2r^2 s+s^2-r^3 u+r s u+r^2 u^2)=0, \]
and the latter factor has discriminant in $u$ equal to $-3r^2(r^2-s)^2$, which is a perfect square
if and only if $r^2-s=0$. This leads to $B=0$.
And if $s=r u$, then $(A,B)=((r-2u)(r^2-r u+u^2),-(r-u)^3 u^3)$, and the factorization is
again of type $(1,1,2,2)$.
\end{proof}

\section{Reducibility type of some higher degree trinomials}

\begin{thm}
If $x^7+A x+B$ has reducibility type $(1,2,4)$ then
\begin{align*}
&A=w^3 - v(3v+1) w^2 + v(v^3+3v^2-2v+1)w - v(v^4-v^3+v^2-v+1),\\
&B=-w(w^2 - (3v^2-2v+1)w + (v^4-v^3+v^2-v+1))
\end{align*}
and
\begin{equation*}
4v^6-8v^5+9v^4-4v^3-6v^2+12v-3 = \square = (4v^3-3v^2+2v-1 -2(3v-1)w)^2
\end{equation*}
for some $v\in\Q$.
\end{thm}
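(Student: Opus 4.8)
The plan is to proceed exactly as in the earlier theorems of this section. Since the trinomial is assumed to have reducibility type $(1,2,4)$, it has an irreducible linear factor, hence a rational root, and after scaling we may take that root to be $1$. So I would write
\[ x^7+Ax+B=(x-1)(x^2+vx+w)(x^4+ax^3+bx^2+cx+d), \]
with $x^2+vx+w$ the quadratic factor and $x^4+ax^3+bx^2+cx+d$ the quartic factor. Note that we are only after necessary conditions on $A,B$, so the irreducibility of these two factors plays no further role.

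Next I would expand the product — it is convenient first to compute $(x-1)(x^2+vx+w)=x^3+(v-1)x^2+(w-v)x-w$ — and equate coefficients. The coefficients of $x^6,x^5,x^4,x^3$ must vanish, and these four equations are triangular, so they solve successively for $a,b,c,d$ as explicit polynomials in $v$ and $w$. Substituting these expressions into the coefficients of $x^1$ and $x^0$, which equal $A$ and $B$, then produces precisely the stated formulas for $A$ and $B$.

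There remains one equation, from the vanishing of the coefficient of $x^2$: a polynomial relation $F(v,w)=0$. A direct computation shows that $F$ is quadratic in $w$, of the shape
\[ F(v,w)=(3v-1)w^2-(4v^3-3v^2+2v-1)w+(v-1)(v^4+v^2+1). \]
Multiplying by $4(3v-1)$ and completing the square in $w$ turns $F=0$ into
\[ \bigl(4v^3-3v^2+2v-1-2(3v-1)w\bigr)^2=(4v^3-3v^2+2v-1)^2-4(3v-1)(v-1)(v^4+v^2+1), \]
and expanding the right-hand side gives $4v^6-8v^5+9v^4-4v^3-6v^2+12v-3$. This is exactly the asserted square condition, so the existence of the factorization forces $v$ and $w$ to satisfy it.

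The argument is computational rather than conceptual, so the main work is bookkeeping: carrying out the triangular elimination for $a,b,c,d$ correctly, and verifying that the $x^2$-coefficient relation really does collapse — after completing the square — to the displayed sextic in $v$. One should also remark that the reduction "up to scaling" to root $1$ is harmless, and, if one wants a sharp statement, record the finitely many values of $v$ for which the three purported factors degenerate or $AB=0$. Unlike several of the preceding results, no elliptic curve, descent, or Chabauty input is needed here, since only a necessary condition is being proved.
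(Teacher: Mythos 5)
Your proposal is correct and follows essentially the same route as the paper: scale the rational root to $1$, write $x^7+Ax+B=(x-1)(x^2+vx+w)(x^4+px^3+qx^2+rx+s)$, solve the top coefficient equations triangularly for the quartic's coefficients, read off $A$ and $B$, and observe that the remaining $x^2$-coefficient relation, quadratic in $w$ with leading coefficient $3v-1$, becomes the stated square condition upon completing the square. The bookkeeping in your sketch checks out (the $x^2$-relation is indeed $(3v-1)w^2-(4v^3-3v^2+2v-1)w+(v-1)(v^4+v^2+1)=0$), so no further input is needed.
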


\begin{proof}
We suppose $x^7+A x+B=(x-1)(x^2+v x+w)(x^4+p x^3+q x^2+r x+s)$, and equate coefficients:
\[ p + v - 1=0, \qquad -q + p - p v + v - w=0, \]
\[ -r +  q - q v + p v - p w + w=0, \qquad -s + r - r v + q v - q w + p w=0, \]
\[ s - s v + r v - r w + q w=0, \qquad A + s v - s w + r w=0, \qquad B + s w=0. \]
Eliminating $p,q,r,s$,
\[ A = w^3 - v(3v+1) w^2 + v(v^3+3v^2-2v+1)w - v(v^4-v^3+v^2-v+1), \]
\[ B= -w(w^2 - (3v^2-2v+1)w + (v^4-v^3+v^2-v+1)), \]
with
\[ 4v^6-8v^5+9v^4-4v^3-6v^2+12v-3 = (4v^3-3v^2+2v-1 -2(3v-1)w)^2. \]
\end{proof}

\noindent
We have been unable to determine the finitely many rational
points on the curve $C$ of genus 2:
\[ C:\;y^2=4v^6-8v^5+9v^4-4v^3-6v^2+12v-3. \]
but believe the set of (finite) points is the following:
\[ (v,\pm y)=(1,2), \quad (-1,2), \quad (1/3,14/27), \quad (7/5,446/125), \]
with corresponding $w=0$, $1$; $w=1$, $3/2$; $w=13/9$; and $w=13/25$, $327/200$, respectively.
The first leads to $B=0$; and after scaling, the others determine the following factorizations,
which we believe to be the complete list of the given type:
\begin{align*}
&x^7-232x+336=(x-2)(x^2-2x+6)(x^4+4x^3+6x^2-4x-28),\\
&x^7+1247x-5928=(x-3)(x^2+x+13)(x^4+2x^3-6x^2+7x+152),\\
&x^7-9073x-32760=(x-5)(x^2+7x+13)(x^4-2x^3+26x^2-31x+504),\\
&x^7-204214984x+2804299680=(x-20)(x^2+28x+654)(x^4-8x^3-30x^2+14072x-214396).
\end{align*}

\begin{rem}
{\rm It is interesting to note that to the best of our knowledge,
these trinomials with reducibility type $(1,2,4)$ give the first explicit
examples showing that some (exceptional) finite sets defined in Theorems 3 of
Schinzel~\cite{Sch2,Sch3} are non-empty.}
\end{rem}

\begin{thm}
\begin{enumerate}
\item If $x^7+A x+B$ has reducibility type $(3,4)$ then up to scaling
\begin{align*}
%(A,B)= & ( (2t^4-5t^2u+4u^2)(t^6-2t^4u-t^2u^2+u^3)/(3t^2-2u)^2, \\
%       & -t(t^2-3u)(t^2-2u)(t^2-u)(t^4-t^2u+u^2)/(3t^2-2u)^2),
(A,B)= & ( (4u^2-5u+2)(u^3-u^2-2u+1)/(2u-3)^2, \\
       & (3u-1)(2u-1)(u-1)(u^2-u+1)/(2u-3)^2), \quad u \neq 1,\frac{1}{2}, \frac{1}{3}, \frac{3}{2},
\end{align*}
with
\begin{align*}
%x^7+A x+B = & \left( x^3+t x^2+u x-\frac{t(t^2-3u)(t^2-u)}{3t^2-2u} \right) \\
%            & \left( x^4-t x^3+(t^2-u)x^2-\frac{t(2t^4-4t^2u+u^2)}{3t^2-2u}x+\frac{(t^2-2u)(t^4-t^2u+u^2)}{3t^2-2u} \right).
x^7+A x+B = & \left( x^3+x^2+u x+\frac{(3u-1)(u-1)}{2u-3} \right) \\
            & \left( x^4-x^3-(u-1)x^2+\frac{(u^2-4u+2)}{2u-3}x+\frac{(2u-1)(u^2-u+1)}{2u-3} \right).
\end{align*}
\item There are no trinomials $x^7+A x^2+B$ with reducibility type $(3,4)$.
\item If $x^7+A x^3+B$ has reducibility type $(3,4)$ then up to scaling $(A,B)=(2,-1)$ with
\[ x^7+A x^3+B = (x^3+x^2-1)(x^4-x^3+x^2+1). \]
\end{enumerate}
\end{thm}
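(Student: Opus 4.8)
The plan is to run, for all three parts, the same coefficient‑comparison used throughout the paper. Write
\[
x^{7}+Ax^{j}+B=(x^{3}+ax^{2}+bx+c)(x^{4}+px^{3}+qx^{2}+rx+s),\qquad j\in\{1,2,3\},
\]
and equate coefficients: this forces five of the intermediate coefficients of the product to vanish, determines $A$ as the coefficient of $x^{j}$, and gives $B=cs$. In every case the vanishing of the coefficients of $x^{6},x^{5},x^{4}$ solves for $p,q,r$ in terms of $a,b,c$, and the vanishing of one further coefficient solves for $s$; the last vanishing coefficient is then a single relation in the remaining unknowns, cutting out the relevant curve. The subleading coefficient $a$ of the cubic may be normalized: if $a=0$, the remaining relations force a degenerate situation (either $B=0$, or one of the two factors acquires a rational root), so no type $(3,4)$ arises; hence we may assume $a\neq 0$, and after a scaling take $a=1$.

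For part (1), the last relation (vanishing of the coefficient of $x^{2}$) comes out \emph{linear} in $c$, of the form $3b^{2}-4b+1+(3-2b)c=0$, so
\[
c=\frac{(3b-1)(b-1)}{2b-3}
\]
is a rational function of $b$. Putting $u=b$ and back‑substituting into $p,q,r,s$ and then into the coefficients of $x$ and $1$ reproduces exactly the asserted formulas for $A,B$ and the claimed factorization; the excluded values $u\in\{1,\tfrac12,\tfrac13,\tfrac32\}$ are precisely those at which a denominator vanishes or $AB=0$. (For a full "if and only if" one would additionally check that for generic $u$ both factors are irreducible — the cubic having no rational root suffices — but this is not needed for the stated implication.)

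For parts (2) and (3) the last relation is instead \emph{quadratic} in $c$, say $c^{2}-\alpha(b)\,c+\beta(b)=0$, so its discriminant in $c$ must be a square. In part (2) this is the condition
\[
4b^{3}+4b^{2}-4b+1=\square ,
\]
and in part (3) it is
\[
(2b-1)(b-1)(2b^{2}+b+1)=\square ,
\]
each defining an elliptic curve over $\Q$. I would compute (by $2$‑descent, or with Magma) that both curves have rank $0$, enumerate their finitely many rational points, solve the quadratic for $c$ at each, and substitute back. In case (2) every resulting $(A,B)$ has $AB=0$ or yields a quartic factor with a rational root, hence is not of type $(3,4)$ — a representative instance being $x^{7}-2x^{2}+1=(x^{3}+x^{2}-1)(x-1)(x^{3}+x+1)$ — which proves the non‑existence. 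In case (3) the only rational point of the curve giving $AB\neq 0$ together with two irreducible factors is $b=0$, $c=-1$, producing $(A,B)=(2,-1)$ and the factorization $x^{7}+2x^{3}-1=(x^{3}+x^{2}-1)(x^{4}-x^{3}+x^{2}+1)$; one finishes by verifying that this cubic and quartic are irreducible over $\Q$.

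The only real obstacle is the Mordell–Weil computation for the two genus‑$1$ curves in parts (2) and (3): the eliminations are purely mechanical, but one must verify that these curves have rank $0$ (as the statements of the theorem force) and then sort carefully through their few rational points — including the degenerate ones coming from roots of the discriminant and from the points at infinity — at which a cubic or quartic factor becomes reducible and the corresponding $(A,B)$ must be discarded.
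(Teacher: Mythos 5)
Your proposal is correct and follows essentially the same route as the paper: the same coefficient comparison with the cubic's quadratic coefficient normalized to $1$ (the paper's ``scaling so that $t=1$''), with your discriminant conditions $4b^3+4b^2-4b+1=\square$ and $(2b-1)(b-1)(2b^2+b+1)=\square$ being exactly the paper's displayed elliptic-curve equations specialized at $t=1$, both of rank $0$, whose few points lead only to $AB=0$ or to the $(1,3,3)$ trinomial $x^7-2x^2+1$ in part (2) and to $(A,B)=(2,-1)$ in part (3). Your separate treatment of the $a=0$ case is a harmless addition that the paper absorbs by keeping the equations homogeneous in $t$.
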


\begin{proof}
The first part is immediate on comparing coefficients in the
expression $x^7+A x+B=(x^4+p x^3+q x^2+r x+s)(x^3+t x^2+u x+v)$, and
scaling so that $t=1$ (in fact this result can be found in
\cite{Sch1}). For the second part, suppose $x^7+A x^2+B=(x^4+p x^3+q
x^2+r x+s)(x^3+t x^2+u x+v)$. Comparing coefficients and eliminating
$p,q,r,s$:
\[ 4 u^3+4 t^2 u^2-4 t^4 u+t^6 = (t^3-4t u+2v)^2, \]
the equation of an elliptic curve with rank 0 and torsion group of order 3. The torsion leads
to the $(1,3,3)$ factorization $x^7-2x^2+1=(x-1)(x^3+x+1)(x^3+x^2-1)$. For the third part,
suppose $x^7+A x^3+B=(x^4+p x^3+q x^2+r x+s)(x^3+t x^2+u x+v)$. Comparing coefficients and eliminating
$p,q,r,s$:
\[ (u-t^2)(2u-t^2)(2u^2+t^2 u+t^4) = (2 t v-t^2 u-2 u^2+t^4)^2, \]
the equation of an elliptic curve of rank 0 and torsion group of order 6. The only non-trivial
trinomial that results is the one in the statement of the Theorem.
\end{proof}

% x^7+A x^2+B with (1,2,4): results in genus 4 curve
% x^7+A x^3+B with (1,2,4): results in genus 5 curve
% x^7+A x+B with (1,3,3): results in genus 4 curve
% x^7+A x^2+B with (1,3,3): results in genus 7 curve
% x^7+A x^3+B with (1,3,3): results in genus 9 curve
% x^7+A x^i+B with (2,5): genus 0, i=1,2,3
% x^7+A x+B with (3,4): genus 0
% x^7+A x^2+B with (3,4): genus 1 curve, rank 0
% x^7+A x^3+B with (3,4): genus 1 curve, rank 0

% x^9+A x+B with (4,5): genus 3 curve
% x^9+A x^4+B with (3,6): genus 3 curve
% x^9+A x^4+B with (4,5): genus 7 curve

% x^10+A x+B with (4,6): genus 6 curve
% x^15+A x^6+B with x^5+A x^2+B irreducible: genus 3 curve

\begin{thm}\label{deg8}
The trinomial $x^{8}+A x^3+B$ is divisible by the polynomial
$x^3+u x^2+v x+w$ if and only if
either $u=0$ and $(A,B)=(-3t^5,-t^8), \quad t \in \Q\setminus\{0\}$, with (upon scaling)
\[ x^8-3 x^3-1 = (x^3+x+1)(x^5-x^3-x^2+x-1); \]
or, upon scaling to $u=1$,
\begin{align}
&A=1-5v+6v^2-v^3+4w-6v w+w^2,\label{eq1}\\
&B=-w(v-3v^2+v^3-w+4v w-w^2),\label{eq2}
\end{align}
and
\begin{equation}
\label{eq3}
(v^2+2v-1)(4v^3-3v^2+2v-1)=\square = (2(v+2)w-(3v-1)(v+1))^2.
\end{equation}
In particular the only trinomials $x^{8}+Ax^3+B$ with reducibility type $(3,5)$ are those
with the numbers {\rm (1), (2), (3), (4)} on the list at the end of Schinzel~\cite{Sch1}.
%\begin{enumerate}
%\item
%\item
%$x^8-24x^3+32=(-4+2x^2+x^3)(-8-4x^2+4x^3-2x^4+x^5),\quad P=(0,-1),$
%\item
%$x^8+108x^3+243=(x^3+3x^2+9x+9)(x^5-3x^4+18x^2-27x+27),\quad
%P=(1,\pm 2),$
%\item
%$x^8-303555x^3-178960936=(x^3+7x^2-98x-2156)(x^5-7x^4+147x^3+441x^2-3773x+83006),\quad
%P=(-2,\pm 7)$.
%\end{enumerate}
\end{thm}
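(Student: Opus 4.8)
The plan is to follow the pattern of the earlier theorems: realise the divisibility as a polynomial identity, compare coefficients, eliminate, and reduce to the problem of finding rational points on a curve.

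First I would write
\[ x^{8}+Ax^3+B=(x^3+ux^2+vx+w)(x^5+c_4x^4+c_3x^3+c_2x^2+c_1x+c_0), \]
and equate coefficients. The coefficients of $x^7,x^6,x^5,x^4,x^2,x$ must vanish, giving six equations in $c_0,\dots,c_4$, while the coefficients of $x^3$ and of $x^0$ read off $A$ and $B$. Eliminating $c_0,\dots,c_4$ from the six vanishing conditions leaves one relation among $u,v,w$, together with formulas for $A$ and $B$. The case $u=0$ is handled first and separately: here the equations decouple and one verifies directly that $(A,B)=(-3t^5,-t^8)$, the scaling $t=1$ producing $x^8-3x^3-1=(x^3+x+1)(x^5-x^3-x^2+x-1)$ with both factors visibly irreducible over $\Q$. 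For $u\neq 0$, scale to $u=1$; the formulas for $A,B$ then become exactly (\ref{eq1}) and (\ref{eq2}), and the one remaining relation rearranges to (\ref{eq3}), i.e. $\bigl(2(v+2)w-(3v-1)(v+1)\bigr)^2=(v^2+2v-1)(4v^3-3v^2+2v-1)$; a rational $w$ exists precisely when the right-hand side is a rational square.

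Hence, apart from the $u=0$ family, the divisibility data are parametrised by the rational points of the genus-$2$ curve
\[ C\colon\; y^2=(v^2+2v-1)(4v^3-3v^2+2v-1) \]
(with $v\neq -2$ and $v$ away from the finitely many degenerate values), $w$ being recovered rationally from $(v,y)$. The heart of the proof is then to determine all rational points on $C$. I would begin with a $2$-descent on $\operatorname{Jac}(C)$ to bound its Mordell--Weil rank; if that rank is $\le 1$, classical Chabauty (as implemented in Magma~\cite{Mag}) finishes. If the rank is too large, I would instead pass to the field $K=\Q(\sqrt2)$, over which $v^2+2v-1$ splits, and assemble from the resulting square-class conditions a covering collection of elliptic curves over $K$ of rank strictly less than $[K:\Q]=2$; elliptic Chabauty then applies, exactly as in the computation used earlier in the paper to settle the $(1,1,2,2)$ and $(2,2,2)$ types of $x^6+Ax+B$. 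Either route yields the complete finite list of admissible pairs $(v,w)$.

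Finally, for every rational point so found (and for the $u=0$ trinomial) one computes $(A,B)$ and factors $x^8+Ax^3+B$ over $\Q$. Pairs with $AB=0$ are discarded, as are those for which the cubic $x^3+x^2+vx+w$ or the complementary quintic is itself reducible, since these yield finer reducibility types rather than $(3,5)$. What remains is precisely the set of trinomials numbered $(1)$--$(4)$ in Schinzel~\cite{Sch1}. I expect the rational-point computation to be the main obstacle: $C$ is a generic genus-$2$ curve with no obvious elliptic quotient over $\Q$, so the argument depends on finding a number field in which an elliptic Chabauty run is decisive, and on the relevant ranks coming out small enough.
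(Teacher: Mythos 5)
The first half of your argument is exactly the paper's: compare coefficients (equivalently, compute the remainder of $x^8+Ax^3+B$ modulo $x^3+ux^2+vx+w$), dispose of $u=0$ separately, and for $u\neq 0$ scale to $u=1$ to obtain (\ref{eq1}), (\ref{eq2}) together with the square condition (\ref{eq3}). That part is fine. The genuine gap is the step you yourself flag as the main obstacle: determining all rational points on the genus-$2$ curve $C$. This is the substance of the theorem (it is what pins the $(3,5)$ trinomials down to items (1)--(4) of Schinzel's list), and your proposal only offers a contingent plan whose two concrete routes are not shown to work. In fact the paper does not proceed by a rank bound on the Jacobian followed by classical Chabauty, nor by a descent over $\Q(\sqrt2)$ alone; the authors explicitly remark that ``trying to work exclusively over the quadratic or the cubic number field led to problems with the computation'' --- and working over $\Q(\sqrt2)$ alone is precisely your fallback.

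What the paper actually does is the following. Homogenize, assume $(v,u^2)=1$, and write $v^2+2u^2v-u^4=c_0g^2$, $4v^3-3u^2v^2+2u^4v-u^6=c_0h^2$ with $c_0\in\{\pm1,\pm2\}$; the values $c_0=1,-2$ are eliminated because the corresponding elliptic curves have rank $0$. For $c_0=2$ one works in the cubic field $L=\Q(\theta)$, $\theta^3-2\theta^2+3\theta-4=0$, which splits the \emph{cubic} factor (not the quadratic one your plan exploits), and elliptic Chabauty on two rank-$1$ curves over $L$ finishes that case. For $c_0=-1$ one combines square-class and unit information from $K=\Q(\sqrt2)$ --- where a real-positivity constraint analogous to (\ref{realcond}) is needed to fix signs and the unit exponent --- with the factorization over $L$; one sub-case reduces to a rank-$0$ curve over $L$, but the last sub-case forces passage to an elliptic curve over the degree-$6$ compositum of $K$ and $L$, of rank $2$, where elliptic Chabauty applies only because $2<6$. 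So the missing content is not a routine Magma verification: it requires this particular descent structure and this particular choice of auxiliary fields (including the compositum) for which the ranks happen to come out small enough, whereas the specific alternatives you propose (Jacobian descent plus Chabauty, or elliptic Chabauty over $\Q(\sqrt2)$) are exactly the ones the paper indicates do not suffice. You would also need to supply the final irreducibility check of the cubic and quintic factors at the surviving points, which you do mention, and that part is straightforward.
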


\begin{proof}
Applying the Division Algorithm, the remainder on dividing $x^8+A x^3+B$ by $x^3+u x^2+v x+w$
is equal to $a x^2+b x+c$, with
\begin{align*}
&a=-A u+u^6-5u^4 v+6u^2 v^2-v^3+4u^3 w-6u v w+w^2,\\
&b=-A v+u^5 v-4u^3 v^2+3u v^3-u^4 w+6u^2 v w-3v^2 w-2u w^2,\\
&c=B-A w+u^5 w-4u^3 v w+3u v^2 w+3u^2 w^2-2v w^2.
\end{align*}
If $u=0$, then it follows that $(v,w)=(t^2,t^3)$, $(A,B)=(-3t^5,-t^8)$, for some non-zero $t \in \Q$.
If $u \neq 0$, then setting $a=c=0$ gives (\ref{eq1}), (\ref{eq2}); and demanding $b=0$
gives (\ref{eq3}) in the form
\begin{equation*}
(v^2+2u^2v-u^4)(4v^3-3u^2v^2+2u^4v-u^6) = (2(v+2u^2)w-u(3v-u^2)(v+u^2))^2.
\end{equation*}
The problem of determining all trinomials $x^8+ax^3+b$ with reducibility type $(3,5)$ is
thus reduced to finding all rational points on the genus two curve
\begin{equation*}
C:\;(v^2+2u^2v-u^4)(4v^3-3u^2v^2+2u^4v-u^6)=\square,
\end{equation*}
where we can assume without loss of generality that $(v,u^2)=1$. We show
the points comprise precisely the following set: $(v,u^2)=(1,0),(0,1),(1,1),(-2,1)$,
which lead just to the factorizations on the Schinzel list.  \\
Observe first that $r=v/u^2$ satisfies $(r^2+2r-1)(4r^3-3r^2+2r-1)>0$, so necessarily
\begin{equation}
\label{realcond}
(-1-\sqrt{2}) < u/v^2 < (-1+\sqrt{2}), \mbox{ or } 0.60583 <  u/v^2.
\end{equation}
Factoring over $\Z$,
\begin{equation}
v^2+2u^2 v-u^4 = c_0 g^2, \quad 4v^3-3u^2 v^2+2u^4 v-u^6 = c_0 h^2,
\label{gheqs}
\end{equation}
with $c_0=\pm 1, \pm 2$, and $g,h \in \Z$. When $c_0=1,-2$, the latter elliptic curve has rank 0,
and leads only to $u=0$; so we need only consider $c_0=-1,2$.\\
We need to work over two number fields. First, $K=\Q(\sqrt{2})$ with integer ring $\Z[\sqrt{2}]$,
class number 1, and fundamental unit $e=1+\sqrt{2}$. Second, $L=\Q(\theta)$, where
$\theta^3-2\theta^2+3\theta-4=0$. The ring of integers is $\Z[\theta]$; the class number is 1;
and a fundamental unit is $\epsilon=-1-\theta+\theta^2$, of norm 1. There are factorizations into
prime ideals as follows:
\[ (2) = p_{21}p_{22}^2=(2-\theta)(-1+\theta)^2, \qquad (5)=p_5^3=(3-\theta+\theta^2)^2. \]
Case I: $c_0=2$.\\
Factoring over $L$ the second equation at (\ref{gheqs}),
\[ (\theta v-u^2) ((\theta^2 - 2\theta + 3)v^2 + (\theta - 2)v u^2 + u^4) = 2 h^2 \]
and the (ideal) gcd of the factors on the left divides $(3-4\theta+3\theta^2)=p_{22}^3 p_5^2$.
Thus
\[ \theta v-u^2 = \pm \epsilon^i p_{22}^j p_5^k \rho^2, \]
for $i,j,k=0,1$, and $\rho \in \Z[\theta]$.  Taking norms
\[ \pm 2^j 5^k = 2 \Box, \]
forcing the plus sign, and $(j,k)=(1,0)$. Hence
\[ \theta v-u^2 = \epsilon^i (-1+\theta) \rho^2, \quad i=0,1. \]
When $i=0$, we have
\[ \theta v-u^2 = (-1+\theta)\rho^2, \qquad v^2+2v u^2-u^4=2g^2, \]
so that
\[ (\theta v-u^2)(v^2+2v u^2-u^4)=2(-1+\theta) \Box, \]
the equation of an elliptic curve of rank 1 over $\Q(\theta)$. The elliptic Chabauty routines in
Magma show that the only points with $u:v$ rational are $(v,\pm u)=(1,0), (1,1)$. \\
When $i=1$,
\[ \theta v-u^2 = \epsilon (-1+\theta)\rho^2, \qquad v^2+2v u^2-u^4=2g^2, \]
so that
\[ (\theta v-u^2)(v^2+2 v u^2 - u^4)=2 \epsilon (-1+\theta) \Box. \]
again, an elliptic curve of rank 1 over $\Q(\theta)$. The only points with $u:v$ rational
are given by $(v,\pm u)=(1,0)$. \\
Case II: $c_0=-1$:\\
Factoring over $K$ the first equation at (\ref{gheqs}),
\[ (v+(1+\sqrt{2})u^2)(v+(1-\sqrt{2})u^2) = -g^2, \]
and the great common divisor of the two factors on the left divides
$2\sqrt{2}$. Thus
\[ v+(1+\sqrt{2})u^2 = \pm e^i (\sqrt{2})^j \alpha^2, \]
for $i,j=0,1$, and $\alpha \in \Z[\sqrt{2}]$. From (\ref{realcond}), we must have the plus
sign.  Taking norms,
\[ (-1)^{i+j}2^j = -\Box, \]
forcing $(i,j)=(1,0)$.
Thus
\[ v+(1+\sqrt{2})u^2 = e \alpha^2 . \]
As above, on factoring over $L$ the second equation at (\ref{gheqs}),
\[ \theta v-u^2 = \pm \epsilon^i p_{22}^j p_5^k \rho^2, \]
where $i,j,k=0,1$, and $\rho \in \Z[\theta]$.
Taking norms,
\[ \pm 2^j 5^k = -\Box, \]
forcing the minus sign, and $(j,k)=(0,0)$. Hence
\[ \theta v-u^2 = -\epsilon^i \rho^2, \qquad (\theta^2-2\theta+3)v^2+(\theta-2)v u^2+u^4 = \epsilon^{-i} \sigma^2, \]
for $i=0,1$, and $\sigma \in \Z[\theta]$.\\
In the case $i=1$, then
\[ \theta v-u^2 = -\epsilon \rho^2, \quad v^2+2v u^2-u^4=-g^2. \]
Thus
\[ (\theta v-u^2)(v^2+2v u^2-u^4)=\epsilon \Box,  \]
and Magma tells us this curve has rank 0 over $L$; the only points are the torsion points
given by $v/u^2=1/\theta$. \\
Suppose finally $i=0$. Trying to work exclusively over the quadratic or
the cubic number field led to problems with the computation. Instead, consider
\[ ((\theta^2-2\theta+3)v^2 + (\theta-2)v u^2 + u^4) (v+(1+\sqrt{2})u^2) = e \Box, \]
the equation of an elliptic curve over the compositum of $K$ and $L$. The rank is determined
to be 2, and the elliptic Chabauty
routines show that the only points with $v:u$ rational are given by
$(v,u^2)=(1,0), (0,1), (-2,1)$.
\end{proof}

\begin{thm}\label{deg9}
The trinomial $x^{9}+A x^2+B$ is divisible by the polynomial
$x^3+u x^2+v x+w$ if and only if
\begin{align*}
&A=u^7-6u^5v+10u^3v^2-4uv^3+5u^4w-12u^2vw+3v^2w+3uw^2,\\
&B=w(u^6-5u^4v+6u^2v^2-v^3+4u^3w-6uvw+w^2),
\end{align*}
and
\begin{equation}
\label{curve9-2}
u^{10}-4u^8v+10u^6v^2-12u^4v^3-3u^2v^4+12v^5=(6(u^2-v)w+u^5-8u^3 v+9 u v^2)^2.
\end{equation}
In particular the only trinomials $x^9+A x^2+B$ with reducibility type $(3,6)$ are those
with the numbers {\rm (6), (7), (8)} on the list in Schinzel~\cite{Sch1}, namely, up to scaling,
\[ x^9 \pm 32x^2 \mp 64, \quad x^9 \pm 81x^2 \mp 54, \quad x^9 \pm 729x^2 \mp 1458. \]
\end{thm}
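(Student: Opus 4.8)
\emph{Proof proposal.} The divisibility criterion is the routine part, to be obtained by the Division Algorithm exactly as in Theorem~\ref{deg8}. Since $Ax^2+B$ already has degree less than $3$, the remainder of $x^9+Ax^2+B$ upon division by $x^3+ux^2+vx+w$ equals $r(x)+Ax^2+B$, where $r(x)=ax^2+bx+c$ is the remainder of $x^9$ alone; hence divisibility is equivalent to $a+A=0$, $b=0$, $c+B=0$, with $a,b,c$ explicit polynomials in $u,v,w$. The first and third equations give $A=-a$ and $B=-c$, producing the stated formulas, and the middle equation $b=0$ is, for $v\neq u^2$, a quadratic in $w$; completing the square (the leading coefficient of $b$ in $w$ being a multiple of $u^2-v$) turns it into precisely~(\ref{curve9-2}). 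The degenerate loci are handled separately: on $u=0$ one reduces to $v^3=3w^2$, and on $v=u^2$ the condition $b=0$ is linear in $w$; each contributes only a single trinomial up to the weight-$(1,2,3)$ scaling of the cubic, and I would check directly that the $u=0$ family is $x^9\pm81x^2\mp54$ (Schinzel~\cite{Sch1}, item~(7)) while the $v=u^2$ trinomial is not of type $(3,6)$.

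For the ``in particular'' statement one must find \emph{all} solutions of the divisibility conditions, up to scaling, and retain those for which both $x^3+ux^2+vx+w$ and its degree-$6$ cofactor are irreducible over $\Q$. After scaling to $u=1$, equation~(\ref{curve9-2}) becomes the genus-$2$ curve
\[
 C:\quad Y^2 = 12v^5-3v^4-12v^3+10v^2-4v+1 ,
\]
with $w$ recovered rationally from $(v,Y)$ through~(\ref{curve9-2}); so the whole question reduces to computing $C(\Q)$ and pulling back. The claim to be proved is that the resulting type-$(3,6)$ trinomials are exactly $x^9\pm32x^2\mp64$ and $x^9\pm729x^2\mp1458$, together with $x^9\pm81x^2\mp54$ coming from the $u=0$ locus.

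I would resolve $C(\Q)$ by the descent-plus-elliptic-Chabauty method used for Theorem~\ref{deg8}, since the rank of $\mathrm{Jac}(C)$ is almost surely too large for classical Chabauty over $\Q$. First factor the right-hand side of~(\ref{curve9-2}) over $\Z$ as a product of forms in $u^2$ and $v$ — a quadratic times a cubic if the quintic in $v$ splits that way, otherwise passing to the field generated by a root of the quintic — so that on each square class the problem becomes an equation $(\text{one factor})\cdot(\text{complementary factor})=c_0\cdot\square$ over a small number field $K$, with only finitely many squarefree $c_0$ admissible by the discriminant. A norm computation on ideals, combined with local solvability at the primes above $2$, $3$ and $5$ (by analogy with the degree-$8$ case), should eliminate all but a few branches; each survivor is an elliptic curve over $K$ of rank strictly less than $[K:\Q]$, on which the elliptic Chabauty routines in Magma~\cite{Mag} determine the finitely many points with $v$ rational. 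Substituting these into the formulas for $A$ and $B$ and discarding the points with $AB=0$ or a reducible factor should leave exactly the listed trinomials.

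The main obstacle, as for Theorem~\ref{deg8}, is the unconditional determination of $C(\Q)$: selecting auxiliary number fields in which the relevant Mordell--Weil rank drops below the degree so that elliptic Chabauty is applicable, organising the descent into a manageable list of cases, and verifying the local obstructions that rule out the spurious branches; as there, one may be forced to work over the compositum of a quadratic and a cubic subfield before the Chabauty computation goes through. Everything else — the Division Algorithm, the identification of $A$, $B$, and~(\ref{curve9-2}), and the final matching against Schinzel's list — is bookkeeping.
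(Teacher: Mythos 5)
Your overall route is the one the paper takes. The Division Algorithm gives $A$, $B$, and the middle remainder coefficient $b=3(u^2-v)w^2+(u^5-8u^3v+9uv^2)w+(v^4-6u^2v^3+5u^4v^2-u^6v)$, and multiplying $b=0$ by $12(u^2-v)$ and completing the square yields exactly (\ref{curve9-2}); the finiteness question is then the determination of the rational points on the genus-2 curve, which the paper also resolves by descent plus elliptic Chabauty. The quintic form is irreducible over $\Q$, so one lands in your second alternative: the paper works in the quintic field $K=\Q(\theta)$, $\theta^5+\theta^4+4\theta^3+4\theta^2-8\theta+4=0$ (using the primes above $2,3,7$ rather than your $2,3,5$), the linear factor absorbs a square up to a unit class, local solvability leaves two classes, and in each the complementary quartic is an elliptic curve over $K$ of rank $4$ resp.\ $3$, less than $[K:\Q]=5$, so the Magma elliptic Chabauty routines apply. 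So the strategy, and even the anticipated obstacles, match the paper.

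The one genuine slip is your treatment of the locus $v=u^2$. There both sides of (\ref{curve9-2}) equal $4u^{10}$, so this locus lies on the genus-2 curve for every $w$ (it is the paper's point $(X,z^2)=(12,1)$, i.e.\ $v/u^2=1$, or $(v,Y)=(1,\pm 2)$ on your affine model); but $w$ cannot be recovered from (\ref{curve9-2}) there, and one must return to $b=0$, which degenerates to $2u^5w-u^8=0$, i.e.\ $w=u^3/2$. Scaling to $u=2$ gives the cubic $x^3+2x^2+4x+4$ and the trinomial $x^9-32x^2+64$: this degenerate branch is precisely where Schinzel's item (6), $x^9\pm 32x^2\mp 64$, comes from, and it is of type $(3,6)$, contrary to your guess. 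As written, your plan recovers $w$ ``rationally from $(v,Y)$'' at every point of $C(\Q)$ and expects the $32$-family to arise that way, while dismissing $v=u^2$; carried out literally this loses exactly those two trinomials, since the nondegenerate finite points ($v=0$) give only $x^9\pm 729x^2\mp 1458$ and the $u=0$ locus gives $x^9\pm 81x^2\mp 54$. So the separate analysis you defer to a ``direct check'' is not optional bookkeeping but supplies a third of the answer; the same caveat is needed for the ``if'' direction of the divisibility criterion, since on $v=u^2$ equation (\ref{curve9-2}) holds for all $w$ whereas divisibility forces $w=u^3/2$.
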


\begin{proof}
The Division Algorithm is used as in the preceding Theorem to obtain the first statement.
To complete the proof, we need to determine all rational points on the curve
(\ref{curve9-2}) of genus 2. Take the equation in the form
\[ X^5 -3 X^4 z^2 -144 X^3 z^4 +1440 X^2 z^6 -6912 X z^8 + 20736 z^{10} = Y^2, \]
where $X=12 v z^2/u^2$. Set $K=\Q(\theta)$, $\theta^5+\theta^4+4\theta^3+4\theta^2-8\theta+4=0$,
so that
\[ \mbox{Norm}_{K/\Q}(X + (\theta^4+4\theta^2-8)z^2) = Y^2. \]
% (X + t^4+4*t^2-8)(X^4 + (-t^4-4*t^2+5)*X^3 + (-12*t^3+12*t^2-60*t-84)*X^2 + (-144*t^2+288*t+432)*X -1728*t-1728) = Y^2
The ring of integers in $K$ has basis $\{1, \theta, \theta^2, 1/2(\theta^3 + \theta^2), 1/2(\theta^4 + \theta^2)\}$.
There are ideal factorizations:
\[ (2)=p_{21}p_{22}^4, \quad (3)=p_{31}^3p_{32}^2, \quad (7)=p_{71}^3p_{72}^2; \]
% with
% \begin{align*}
% p_{21}=(2,[1,0,1,0,0])=([6,-6,-2,-3,-2]), \quad & p_{22}=(2,[1,0,1,0,1])=([1,0,1,-2,-1]), \\
% p_{31}=(3,[1,1,0,0,0])=([2,-3,-1,-2,-1]), \quad & p_{32}=(3,[2,1,0,0,0])=([-5,2,1,1,1]), \\
% p_{71}=(7,[4,1,0,0,0])=([8,-11,5,-4,1]), \quad & p_{72}=(7,[5,1,0,0,0])=([-2,4,-5,1,-2]).
% \end{align*}
and fundamental units are given by
\[ \ep_1=[2,-5,4,-1,2], \quad \ep_2=[0,-11,47,-20,-29].  \]
We have
\[ (X + (\theta^4+4\theta^2-8)z^2) = p_{22}^{i_1} p_{31}^{j_1} p_{32}^{j_2} p_{71}^{k_1} p_{72}^{k_2} \Box, \quad i_1,j_1,j_2,k_1,k_2=0,1; \]
% (t^4+4*t^2-8) = p_{22}^8 p_{31}^2 p_{32}^2
and on taking norms,
\[  2^{i_1} 3^{j_1+j_2} 7^{k_1+k_2} = \Box, \]
so that $i_1=0$, $j_1=j_2$, $k_1=k_2$.
If $k_1=k_2=1$, then $X+4z^2 \equiv 0 \bmod{7}$ and $X+3z^2 \equiv 0 \bmod{7}$, impossible.
If $j_1=j_2=1$, then $3 \mid X$, and $(\theta^4+4\theta^2-8)z^2 \equiv 0 \bmod{p_{31}^3}$, contradiction.
Thus
\[ (X + (\theta^4+4\theta^2-8)z^2) = \Box, \]
so that with $\de=\pm \ep_1^{l_1} \ep_2^{l_2}$, $l_1,l_2=0,1$,
\begin{align*}
& X + (\theta^4+4\theta^2-8)z^2 = \de^{-1} a^2, \\
& X^4 - (\theta^4+4\theta^2-5)X^3z^2 - 12(\theta^3-\theta^2+5\theta+7)X^2z^4 \\
& - 144(\theta^2-2\theta-3)X z^6 - 1728(\theta+1)z^8 = \de b^2,
\end{align*}
for integers $a,b$ of $K$ satisfying $ab=y$.
Eliminating $X$ results in an eighth degree equation homogeneous in $a,z$, that is everywhere
locally solvable for precisely the values $\de=1$, $-\ep_1\ep_2$. \\
% d^4*a^8 + (-5*t^4 - 20*t^2 + 37)*d^3*a^6 + (-18*t^4 - 120*t^3 + 48*t^2 - 600*t + 600)*d^2*a^4 + (-1044*t^4 - 144*t^3 - 5472*t^2 + 2160*t + 432)*d*a^2 + 2997*t^4 - 3564*t^3 + 15120*t^2 - 25596*t + 9828 = d*b^2
Case I: $\de=1$.
\begin{align*}
& X^4 + (-\theta^4-4\theta^2+5)X^3z^2 - 12(\theta^3-\theta^2+5\theta+7)X^2z^4 \\
& - 144(\theta^2-2\theta-3)X z^6 - 1728 (\theta+1)z^8 = b^2.
\end{align*}
The curve is birationally equivalent to
\begin{align*}
E: y^2 = & x^3 - (\theta^4+3\theta^3+6\theta^2+14\theta+6)x^2 + \frac{1}{3}(6\theta^4+19\theta^3+47\theta^2+92\theta+94)x \\
         & + \frac{1}{9}(851\theta^4+1318\theta^3+4040\theta^2+5409\theta-4448),
\end{align*}
of rank 4 over $K$. Generators for a subgroup of odd index in $E(K)/2 E(K)$ are given by
\begin{align*}
(0, \frac{1}{6}(17\theta^4 + 32\theta^3 + 95\theta^2 + 150\theta - 18)); & \\
(\frac{1}{3}(-\theta^4 - 2\theta^3 - 8\theta^2 - 10\theta), \frac{1}{2}(-3\theta^4 - 6\theta^3 - 17\theta^2 - 26\theta + 6)); & \\
(\frac{1}{2}(3\theta^4 + 7\theta^3 + 18\theta^2 + 26\theta), \frac{1}{6}(\theta^4 + 10\theta^3 + 37\theta^2 + 42\theta + 30)); & \\
(\frac{1}{3}(-\theta^4 + \theta^3 - 2\theta^2 + 8\theta + 24), \frac{1}{2}(3\theta^4 + 4\theta^3 + 13\theta^2 + 14\theta - 22)), &
\end{align*}
and the Magma routines succeed in showing the only valid solutions are $(X,z^2)=(1,0),(12,1)$.
(We list the generators above because the initial machine computation returned a subgroup
of index 3, and the routines failed). \\
Case II: $\de=-\ep_1 \ep_2$.
\begin{align*}
& X^4 + (-\theta^4-4\theta^2+5)X^3z^2 - 12(\theta^3-\theta^2+5\theta+7)X^2z^4 \\
&  - 144(\theta^2-2\theta-3)X z^6 - 1728(\theta+1)z^8 = -\ep_1 \ep_2 b^2.
\end{align*}
The point $(0, -48\theta^4-228\theta^3-60\theta^2+264\theta-144)$ leads to birational equivalence
with the curve
\[ y^2 = x^3 + (3\theta^3+\theta^2+4\theta+1)x^2 - (6\theta^4+6\theta^3-20\theta^2+28\theta-35)x - (18\theta^4+45\theta^3+53\theta^2-100\theta-71) \]
of rank 3 over $K$, and the Magma routines are successful in showing that the only
valid solutions arise for $(X,z^2)=(0,1)$.
\end{proof}

Similarly we can obtain the following.
\begin{thm}\label{deg10}
The trinomial $x^{10}+A x+B$ is divisible by the polynomial
$x^3+u x^2+v x+w$ if and only if
\begin{align*}
&A=(w-uv)(u^6-6u^4v+10u^2v^2-4v^3+4u^3w-8uvw+w^2),\\
&B=-w(u^7-6u^5v+10u^3v^2-4uv^3+5u^4w-12u^2vw+3v^2w+3uw^2),
\end{align*}
and
\begin{equation}
\label{curve10-1}
%3u^{10}-15u^8v+25u^6v^2-15u^4v^3+3v^5=\square
3u^{10}-15u^8v+25u^6v^2-15u^4v^3+3v^5=(3(2u^2-v)w+3u^5-10u^3v+6u v^2)^2
\end{equation}
\end{thm}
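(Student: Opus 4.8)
The plan is to argue exactly as in the proofs of Theorems~\ref{deg8} and~\ref{deg9}, via the Division Algorithm. One writes the remainder of $x^{10}+Ax+B$ on division by $x^3+ux^2+vx+w$ as $ax^2+bx+c$. Since the terms $Ax$ and $B$ already have degree $\le 2$, they survive the reduction untouched, so $A$ appears (with coefficient $1$) only in $b$ and $B$ appears (with coefficient $1$) only in $c$, while $a$ involves $u,v,w$ alone. Concretely, writing $x^{10}\equiv p_{10}x^2+q_{10}x+r_{10}\pmod{x^3+ux^2+vx+w}$, one has $a=p_{10}$, $b=q_{10}+A$, $c=r_{10}+B$, and the coefficients follow from the linear recursion $p_n=-up_{n-1}-vp_{n-2}-wp_{n-3}$, $q_n=r_{n-1}-vp_{n-1}$, $r_n=-wp_{n-1}$, started from $p_0=p_1=0$, $p_2=1$; this is a short, purely mechanical computation.

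Divisibility by the cubic holds precisely when $a=b=c=0$. The conditions $b=0$ and $c=0$ read $A=-q_{10}$ and $B=-r_{10}=wp_9$; on expanding one gets $-q_{10}=(w-uv)(u^6-6u^4v+10u^2v^2-4v^3+4u^3w-8uvw+w^2)$ and $wp_9=-w(u^7-6u^5v+10u^3v^2-4uv^3+5u^4w-12u^2vw+3v^2w+3uw^2)$ --- that is, $B$ is $-w$ times the polynomial denoted $A$ in Theorem~\ref{deg9} --- which are the formulas in the statement. The remaining condition $a=p_{10}=0$ involves only $u,v,w$, and after expanding $p_{10}$ it is the quadratic in $w$
\[
3(2u^2-v)w^2+(6u^5-20u^3v+12uv^2)w+(u^8-7u^6v+15u^4v^2-10u^2v^3+v^4)=0 .
\]
Writing $\alpha=3(2u^2-v)$, $\beta=6u^5-20u^3v+12uv^2$ and $\gamma$ for the constant term, a root $w$ of this quadratic satisfies $(\alpha w+\tfrac12\beta)^2=\tfrac14\beta^2-\alpha\gamma$, and the identity $\tfrac14\beta^2-\alpha\gamma=3u^{10}-15u^8v+25u^6v^2-15u^4v^3+3v^5$ --- the one substantive computation in the argument --- converts this into~(\ref{curve10-1}).

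Hence there is no genuine obstacle: the proof comes down to a single polynomial long division together with the verification of a handful of polynomial identities, and --- in contrast with Theorems~\ref{deg8} and~\ref{deg9} --- no Diophantine analysis of the genus-$2$ curve~(\ref{curve10-1}) is carried out here, the statement being merely the reformulation of divisibility. The one point needing a word of care is the locus $v=2u^2$, where $\alpha=0$ and the quadratic in $w$ degenerates to the linear equation $14u^5w-17u^8=0$: for $u\neq0$ this still determines $w$ uniquely (as $17u^3/14$), while~(\ref{curve10-1}) collapses to the identity $49u^{10}=49u^{10}$, so there the clause ``(\ref{curve10-1}) holds'' should be understood as the honest condition $a=0$. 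Away from this locus, (\ref{curve10-1}) is equivalent to $a=0$ and the claimed equivalence holds exactly as stated.
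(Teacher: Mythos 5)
Your proposal is correct and follows exactly the route the paper intends: the paper gives no separate proof ("Similarly we can obtain the following"), meaning the Division Algorithm argument of Theorems \ref{deg8} and \ref{deg9}, which is precisely your computation of the remainder $ax^2+bx+c$, solving $b=c=0$ for $A,B$ and completing the square in $a=0$ to obtain (\ref{curve10-1}); I checked the identities and they are right. Your remark about the degenerate locus $v=2u^2$, where (\ref{curve10-1}) holds identically while divisibility still requires the linear condition $14u^5w=17u^8$, is a legitimate small precision the paper's literal "if and only if" glosses over, but it does not alter the substance of the argument.
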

\begin{rem}
{\rm Unfortunately, we are unable to determine all rational points
on the curve (\ref{curve10-1}) of genus two.
% \begin{equation*}
%C_{10}:\;h^2=3u^{10}-15u^8v+25u^6v^2-15u^4v^3+3v^5.
%\end{equation*}
The rational points $(u,\pm v,w)$ with height at most $10^6$ (with $w \neq0$), and
their corresponding trinomials up to scaling, are as follows:
\begin{equation*}
\begin{array}{ll}
(0,3,3), & x^{10} \pm 297x - 243, \\
(1,1,\frac{2}{3}), &  x^{10} \pm 8019x + 13122, \\
(1,2,\frac{17}{14}), &  x^{10} \pm 261312546880x + 2485545010816.
\end{array}
\end{equation*}
The first example is {\rm (11)} on the list in Schinzel~\cite{Sch1}.
The second and third examples, discovered by Cis{\l}owska, are listed as
{\rm (11a), (12a)} in the reprinting of Schinzel~\cite{Sch1} in Schinzel~\cite{Sch4}.
It is likely they are the only such.
}

%The first and second in this table seem to be new sporadic
%trinomials. Their cubic factors are $x^3+3x^2+9x+18$ and
%$x^3+14x^2+392x+3332$, respectively. It seems highly likely that all
%trinomials $x^{10}+A x+B$ with reducibility type $(3,7)$ comprise
%just number {\rm (11)} on the list in the Schinzel paper (third in
%the above table), together with
%\begin{align*}
%&x^{10}\pm 8019x+13122,\\
%&x^{10}\pm 261312546880x + 2485545010816.
%\end{align*}

\end{rem}

\section{Trinomials with forced factors}
For certain trinomials where $(m,n)>1$ we can force an algebraic factor and determine the
reducibility type of the quotient.

\begin{thm}\label{deg8-2}
Suppose that $x^4+A x+B$ has the rational non-zero root $v$. Then reducibility of
$(x^8+A x^2+B)/(x^2-v)$ implies reducibility type $(3,3)$, occurring precisely when (up to scaling)
\begin{equation}\label{AB-8-2}
A=-\frac{(q^2+2q-1)(9q^2-10q+3)}{4}, \quad B=\frac{(q-1)^2(2q-1)(3q-1)^2}{4}, \; q \neq 1,\frac{1}{2},\frac{1}{3}.
\end{equation}
\end{thm}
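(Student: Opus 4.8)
The plan is to start from the hypothesis that $x^4+Ax+B$ has a rational nonzero root $v$. By Theorem~\ref{1111type} (or rather by the factorization underlying it), having a rational root $v$ means that, after the scaling implicit in the statement ``up to scaling'', we may write $x^4+Ax+B=(x-v)(x^3+vx^2+cx+d)$ for appropriate $c,d\in\Q$; comparing coefficients gives $c=v^2$, $d=v^3$ together with $A=-(c v+d-c\cdot 0)$ — more precisely $A=v^3-v c$, wait, I must be careful: expanding $(x-v)(x^3+vx^2+cx+d)$ gives $x^4+(c-v^2)x^2+(d-vc)x-vd$, so the $x^2$-coefficient must vanish, forcing $c=v^2$, and then $A=d-v^3$, $B=-vd$. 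So $d$ is a free parameter. Actually it is cleaner to keep $v$ and one more parameter and recompute $A,B$ in terms of them; the key point is that the hypothesis pins $A,B$ down to a one-parameter family once $v$ is chosen, or equivalently leaves a two-parameter family $(v,d)$ before scaling. Then substitute these expressions for $A,B$ into $x^8+Ax^2+B$ and divide by $x^2-v$.

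Next I would carry out the polynomial division of $x^8+Ax^2+B$ by $x^2-v$. Since $x^8+Ax^2+B = (x^2)^4 + A(x^2) + B$ is a polynomial in $x^2$, and $x^2-v$ is linear in $x^2$, the division is exactly the division of the quartic $T^4+AT+B$ by $T-v$ where $T=x^2$; the hypothesis that $v$ is a root of $T^4+AT+B$ is precisely what makes $x^2-v$ divide $x^8+Ax^2+B$. So the quotient is $Q(x)=x^6+vx^4+v^2x^2+d$ where $d$ is the parameter above (using $x^2+vx^2\cdots$, i.e. $T^3+vT^2+v^2T+d$ with $T=x^2$). Thus $Q(x)$ is an even sextic, $Q(x)=R(x^2)$ with $R(T)=T^3+vT^2+v^2T+d$ a cubic. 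Now I analyze the factorization type of $Q$. Because $Q$ is even, any irreducible factor either is even or comes paired with its ``mirror'' $f(-x)$; in particular $Q$ cannot have an irreducible factor of odd degree unless that factor is itself... no, an odd-degree real factor of an even polynomial need not be even. The right observation: $Q(x)=R(x^2)$ factors over $\Q$ according to how $R(T)$ factors over $\Q$ and whether the roots of $R$ are squares in the relevant fields. If $R$ is irreducible over $\Q$ then $Q$ is either irreducible (degree $6$) or splits as a product of two cubics (when $\Q[T]/(R)$ contains a square root of $T$, equivalently $\mathrm{disc}$/norm conditions); if $R$ is reducible, $Q$ would acquire a factor of degree $\le 4$ dividing it, and one checks this forces $x^2-v'$ type factors i.e.\ further rational square roots, contradicting ``reducibility of $Q$'' being the genuinely new phenomenon rather than $Q$ having a rational root. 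So the reducibility of $Q$ — when $R$ is irreducible, which is the generic case — is exactly reducibility type $(3,3)$, and it happens iff $\sqrt{T}\in\Q[T]/(R(T))$, i.e.\ iff $R(T)$ and $T$ together generate a condition that translates to a single algebraic equation in $(v,d)$.

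To pin down that equation: $\sqrt{T}$ lies in $\Q[T]/(R(T))$ iff there is a polynomial $g(T)=aT^2+bT+e$ with $g(T)^2\equiv T\pmod{R(T)}$. Writing this out gives three equations in $a,b,e,v,d$ (coefficients of $T^4,T^3,\dots$ reduced mod $R$), from which I eliminate $a,b,e$ to obtain one relation $\Phi(v,d)=0$; equivalently, and more efficiently, $Q(x)=(x^3+\alpha x^2+\beta x+\gamma)(x^3-\alpha x^2+\beta x-\gamma)$ for suitable $\alpha,\beta,\gamma$ (this is the only shape a $(3,3)$ split of an even sextic can take, up to sign), and comparing coefficients of $Q(x)=x^6+vx^4+v^2x^2+d$ gives $2\beta-\alpha^2=v$, $\beta^2-2\alpha\gamma=v^2$, $\gamma^2=d$; eliminating $\alpha,\beta,\gamma$ yields a curve in $(v,d)$. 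Parametrizing that curve rationally — I expect it to be rational, genus $0$ — and re-expressing in terms of a parameter $q$, then substituting back into $A=d-v^3$, $B=-vd$ and rescaling, should produce exactly the formulas \eqref{AB-8-2}. Finally I must verify the ``precisely when'' direction and the excluded values: check that for $q\notin\{1,1/2,1/3\}$ the cubic $R(T)$ really is irreducible (so the type is genuinely $(3,3)$ and not $(1,\dots)$ or $(2,\dots)$), and that the excluded values are exactly where $R$ degenerates or where $A$ or $B$ vanishes or $v=0$.

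The main obstacle I anticipate is the case analysis ensuring that ``reducibility of $Q$'' forces type exactly $(3,3)$ and not some other splitting: I must rule out that $Q$ picks up a quadratic or linear factor, which amounts to showing that the relevant auxiliary curves (``$R$ has a rational root'', or ``$R(T)/(T-r)$ makes $Q$ split off a quartic with a square root'') have only degenerate rational points on the locus cut out by our hypothesis — this is the kind of step that in the paper's other theorems gets handled by exhibiting an elliptic curve of rank $0$ or a genus-$\ge 2$ curve, and here I expect a rank-$0$ elliptic or a genus-$0$ sanity check to suffice, but getting the model right and confirming the torsion gives only $AB=0$ is where the real work lies. The pure computation (division algorithm, resultant elimination, rational parametrization) is routine by comparison.
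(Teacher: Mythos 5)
Your plan is essentially the paper's own proof: the quotient is $x^6+vx^4+v^2x^2+(v^3+A)$, the mirror-cubic shape $(x^3+px^2+qx+r)(x^3-px^2+qx-r)$ is forced (the paper cites Lemma 29 of Schinzel where you invoke the Capelli-type argument that $\sqrt{T}\in\Q[T]/(R)$, which is the same content), and the identical coefficient comparison with the scaling $p=1$ produces exactly \eqref{AB-8-2}. The one step you leave as an expectation --- that the cubic factor has no rational root, so the type is genuinely $(3,3)$ --- is precisely where the paper does the real work, showing that a rational root $u$ of $x^3+x^2+qx-\tfrac{(q-1)(3q-1)}{2}$ forces $6u^3+7u^2+4u+1=(3q-u-2)^2$, an elliptic curve of rank $0$ whose finite points ($u=0,-1/2$) give only $B=0$; so your anticipated rank-$0$ argument is indeed what is needed, and carrying it out would complete the proof.
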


\begin{proof}
By Lemma 29 of Schinzel~\cite{Sch1}, if $(x^8+A x^2+B)/(x^2-v)=x^6+v x^4+v^2 x^2+v^3+A$ is reducible,
then it takes the form $(x^3+p x^2+q x+r)(x^3-p x^2+q x-r)$. Comparing coefficients of powers of $x$
and eliminating $r$ gives $(A,B)$ as in the Theorem after scaling so that $p=1$ (it is easy to check
that $p=0$ results in $v=0$). Further, the cubic factor $x^3+x^2+q x-\frac{(q-1)(3q-1)}{2}$ is
irreducible. For if $u$ is a rational root, then $6u^3+7u^2+4u+1=(3q-u-2)^2$. But the corresponding
elliptic curve has rank 0, and the only finite points occur at $u=0,-1/2$, giving $B=0$.
\end{proof}

%\begin{thm}
%Suppose that $x^4+Ax+B$ has the linear factor $x-u$. Then
%$(x^{16}+Ax^4+B)/(x^4-u)$ has reducibility type $(6,6)$ if and only if
%\[ A=-\frac{(v^4+6v^2u+u^2)(v^4-2v^2u+9u^2)}{64v^2}, \quad B=\frac{u(-v^2+u)^2(v^2+3u)^2}{64v^2} \]
%for some $u,v\in\Q\setminus\{0\}.$
%\end{thm}
%\begin{proof}
%The divisibility condition of the Theorem is that $B=-Au-u^4$, and
%then
%\begin{equation*}
%(x^{16}+Ax^4+B)/(x^4-u)=x^{12}+ux^8+u^2x^4+u^3+A.
%\end{equation*}
%This polynomial splits as the product of two polynomials of degree
%six if and only if
%\begin{equation*}
%x^{12}+ux^8+u^2x^4+u^3+A=(x^6+vx^4+px^2+q)(x^6-vx^4+px^2-q)
%\end{equation*}
%and comparing coefficients we find that $A,B$ have the form stated in
%the Theorem, with $p=(u+v^2)/2$, $q=(-3u^2+2uv^2+v^4)/8v$.
%\end{proof}

%\section{Some other genus two curves related to reducibility of trinomials}
\begin{thm}
Suppose $x^5+A x+B$ has the quadratic factor $x^2+u x+v$. Then the
polynomial $(x^{10}+A x^2+B)/(x^4+u x^2+v)$ has reducibility type
$(3,3)$ infinitely often, parameterized by the elliptic curve
$Y^2=X(X^2+12X-4)$  of rank 1.
\end{thm}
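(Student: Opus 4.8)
The plan is to proceed as in the earlier theorems of this section. Write $x^5+Ax+B=(x^2+ux+v)(x^3+px^2+qx+r)$ and compare coefficients of $x^4,x^3,x^2$ to obtain $p=-u$, $q=u^2-v$, $r=2uv-u^3$, so that $A=3u^2v-u^4-v^2$ and $B=uv(2v-u^2)$; note $u\neq 0$, else $B=0$. Substituting $x\mapsto x^2$ in the factorization shows that $x^4+ux^2+v$ divides $x^{10}+Ax^2+B$, with quotient the even sextic
\[
Q(x)=x^6-ux^4+(u^2-v)x^2+(2uv-u^3).
\]
Since $Q$ is even, any factorization of $Q$ into two irreducible cubics has the shape $(x^3+ax^2+bx+c)(x^3-ax^2+bx-c)$: if $Q=h_1h_2$ with the $h_i$ irreducible then $\{-h_1(-x),-h_2(-x)\}=\{h_1,h_2\}$, and $-h_1(-x)=h_1$ would force $h_1=x(x^2+b)$. (This is Lemma 29 of Schinzel~\cite{Sch1}.) Expanding and matching coefficients,
\[
2b-a^2=-u,\qquad b^2-2ac=u^2-v,\qquad c^2=u^3-2uv.
\]

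Next I would eliminate $u$ and $v$: the first relation gives $u=a^2-2b$, the third gives $v=(u^3-c^2)/(2u)$, and substituting into the second and clearing denominators yields $c^2+4uac-2ub^2+u^3=0$. A rational $c$ exists precisely when the discriminant $4u(4ua^2+2b^2-u^2)=4u(3a^4-4a^2b-2b^2)$ is a square. The equation $u(3a^4-4a^2b-2b^2)=\square$ is homogeneous for the weights $\mathrm{wt}(a)=1$, $\mathrm{wt}(b)=2$, so on setting $t=b/a^2$ (the case $a=0$ yields only scalings of $x^5+11x-12$, for which the relevant cubics turn out reducible) it becomes $(1-2t)(3-4t-2t^2)=\square$, that is, a rational point on
\[
C:\quad w^2=4t^3+6t^2-10t+3.
\]
The substitution $X=4t-2$, $Y=4w$ carries $C$ to $Y^2=X(X^2+12X-4)$, which (by descent, or Magma~\cite{Mag}) has rank $1$ over $\Q$; in particular $(X,Y)=(1,3)$ lies on it and has infinite order, its double $(25/36,-395/216)$ being non-integral.

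Therefore $C(\Q)$, and with it the family of solutions, is infinite. From a rational point of $C$ one reads off $a=1$, $b=t$, $c=4t-2\pm w$, then $u=1-2t$, then $v,A,B$ from the formulas above, and by construction $Q(x)=(x^3+x^2+tx+c)(x^3-x^2+tx-c)$. This is a genuine reducibility type $(3,3)$ except when one of $u,v,A,B$ vanishes (finitely many points) or one of the two cubics acquires a rational root; in the latter case $(t,w)$ lies on an auxiliary curve which one checks to have genus $\geq 2$, hence with only finitely many rational points by Faltings. For instance the point $(1,3)$ gives, after clearing denominators,
\[
x^{10}-2005x^2-9996=(x^4-2x^2+51)(x^3+2x^2+3x+14)(x^3-2x^2+3x-14),
\]
with both cubics irreducible; so the bad set is not all of $C$, and removing it leaves infinitely many trinomials $x^{10}+Ax^2+B$, up to scaling, for which $(x^{10}+Ax^2+B)/(x^4+ux^2+v)$ has reducibility type $(3,3)$.

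The coefficient bookkeeping and the elimination are routine. The real content is the arithmetic of $Y^2=X(X^2+12X-4)$ — exhibiting a point of infinite order and, for the sharp statement, confirming by descent that the rank is exactly $1$ — together with verifying that reducibility of the parametrized cubics $x^3\pm x^2+tx\pm c$ cuts out only a finite subset of $C(\Q)$; that last point is where I would expect to spend the most care.
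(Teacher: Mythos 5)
Your argument is correct and follows essentially the same route as the paper: the same divisibility data $(A,B)=(3u^2v-u^4-v^2,\,uv(2v-u^2))$, the antisymmetric factorization of the even sextic, elimination to the condition $(c+2ua)^2=u(3a^4-4a^2b-2b^2)$, the curve $Y^2=X(X^2+12X-4)$ of rank $1$, and finiteness of the reducible-cubic locus via a higher-genus auxiliary curve (the paper records its genus as $4$ in the homogeneous variables). Your dehomogenization $a=1$, the explicit change of variables $X=4t-2$, $Y=4w$, and the witness point $(1,3)$ of infinite order (which reproduces the paper's example $x^{10}-2005x^2-9996$) are only normalization differences.
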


\begin{proof}
The divisibility condition of the Theorem is that
\[ (A,B)=(-u^4+3u^2 v-v^2, -u v(u^2-2v)), \]
and then
\[ (x^{10}+A x^2+B)/(x^4+u x^2+v) = x^6-u x^4+(u^2-v) x^2+u(2v-u^2). \]
The sextic can only split in the form
\[ x^6-u x^4+(u^2-v) x^2+u(2v-u^2) = (x^3+p x^2+q x+r)(x^3-p x^2+q x-r), \]
and comparing coefficients, $u=p^2-2q$, $v=p^4-4 p^2 q+3 q^2+2 p r$, and
\[ (2q-p^2)(2q^2+4p^2 q-3p^4) = (r+2p(p^2-2q))^2. \]
The latter is the equation of an elliptic curve with model $Y^2=X(X^2+12X-4)$,
having rank 1, and generator $P(X,Y)=(5,3)$. Each multiple of $P$ pulls back
to a factorization of the sextic into two cubics. For the cubics to be reducible,
$x^3+p x^2+q x +r$ will have rational root $w$ say, and necessarily
\[ (2q-p^2)(2q^2+4p^2 q-3p^4) = (-w^3- p w^2-q w+2p(p^2-2q))^2.\]
This latter is the  equation of a curve of genus 4, with only finitely many points
(likely just $(p,q,w)=(\pm 2,2,0)$, $(2,3,-1)$, $(-2,3,1)$). \\
Examples: $(p,q,r)=(2,3,2)$ gives $(u,v)=(-2,3)$, $(A,B)=(11,-12)$ and
\[ x^{10}+11 x^2-12 = (x+1)(x-1)(x^2+x+2)(x^2-x+2)(3-2x^2+x^4). \]
And $(p,q,r)=(2,3,14)$, $(u,v)=(-2,51)$, $(A,B)=(-2005,-9996)$ with
\[ x^{10}-2005 x^2-9996 = (x^3-2x^2+3x-14)(x^3+2x^2+3x+14)(x^4-2x^2+51). \]
\end{proof}

\begin{thm}
Suppose $x^5+A x^2+B$ has the quadratic factor $x^2+u x+v$. Then the
polynomial $(x^{10}+A x^4+B)/(x^4+u x^2+v)$ is reducible (with type
$(3,3)$) in precisely the following four cases (up to scaling):
\begin{align*}
& \bullet x^{10}+6875x^4-312500=(x^3-5x^2+50)(x^3+5x^2-50)(x^4+25x^2+125); \\
& \bullet x^{10}+891 x^4-34992=(x^3-3x^2+9x-18)(x^3+3x^2+9x+18)(x^4-9x^2+108); \\
& \bullet x^{10}-119527785x^4-2195696106864= \\
& (x^3+39x^2+507x+3042)(x^3-39x^2+507x-3042)(x^4+507x^2+237276); \\
& \bullet x^{10}+37347689456x^4-609669805268160000= \\
& (x^3+28x^2+1960x+191100)(x^3-28x^2+1960x-191100)(x^4- 3136x^2+16694496).
\end{align*}
\end{thm}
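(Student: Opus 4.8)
The setup parallels the previous theorem. The hypothesis that $x^5+Ax^2+B$ has quadratic factor $x^2+ux+v$ forces a relation expressing $(A,B)$ in terms of $(u,v)$; writing $x^5+Ax^2+B=(x^2+ux+v)(x^3+ax^2+bx+c)$ and comparing coefficients, one eliminates $a,b,c$ to get $A,B$ as explicit polynomials in $u,v$ (subject to one constraint coming from the $x^5$-coefficient being the only free slot, i.e.\ the system over-determines and leaves a curve). First I would carry out this elimination and substitute into the quotient $(x^{10}+Ax^4+B)/(x^4+ux^2+v)$, obtaining an explicit sextic in $x$ whose coefficients are polynomials in $u,v$. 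As in Theorem~\ref{deg8-2} and the preceding theorem, by Lemma~29 of Schinzel~\cite{Sch1} this even sextic, if reducible, must factor as $(x^3+px^2+qx+r)(x^3-px^2+qx-r)$; comparing coefficients expresses $u,v$ (and the constant-term condition) in terms of $p,q,r$, and eliminating one of $p,q,r$ yields a single equation of the shape (quadratic in the remaining variable) $=\square$, i.e.\ an elliptic curve $\mathcal E$ in variables $p,q$ (with $r$ then determined).

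The next step is to analyze $\mathcal E$. I expect it to have rank $0$ over $\Q$ — this is what makes the answer a finite explicit list rather than an infinite family, in contrast to the two immediately preceding theorems where the analogous curves had rank~$1$. So I would compute a minimal model, verify rank~$0$ (via a $2$-descent / Magma~\cite{Mag}), and list the torsion points. Each torsion point pulls back to a triple $(p,q,r)$, hence to values of $(u,v)$ and $(A,B)$; I would then discard those giving $AB=0$ or $uv(u^2-2v)=0$ (degenerate: quotient not defined, or the cubic factors not genuinely of degree~$3$, or the quadratic factor spurious) and scale the survivors. The claim is that exactly four non-degenerate scaled trinomials remain, namely the four displayed; I would exhibit each factorization explicitly, which is a direct check.

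Finally I must address irreducibility: the theorem asserts reducibility type exactly $(3,3)$ for the quotient, so the two cubics $x^3\pm px^2+qx\pm r$ must each be irreducible over $\Q$, and (implicitly, since the quotient's full type is $(3,3)$) the quartic $x^4+ux^2+v$ is itself irreducible and the two cubics are distinct. For the cubics, a rational root $w$ of $x^3+px^2+qx+r$ would, substituted back, force $(2q-p^2)(2q^2+4p^2q-3p^4)=(-w^3-pw^2-qw+\dots)^2$ to hold simultaneously with the equation of $\mathcal E$; combined, these cut out a higher-genus curve, and I would argue (as in the preceding theorem) that its only rational points give $w=0$ or other degeneracies, so in the four listed cases no rational root exists — this can also be verified directly on the four explicit cubics by the rational root test. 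The main obstacle is the rank-$0$ verification and making sure the torsion enumeration is complete and correctly matched against the degeneracy conditions; the irreducibility bookkeeping for the finitely many resulting cubics and quartics is then routine.
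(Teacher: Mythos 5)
Your setup is the same as the paper's: the divisibility condition gives $A=(u^4-3u^2v+v^2)/u$, $B=-v^2(u^2-v)/u$ (with $u\neq 0$), the quotient is the even sextic $x^6-ux^4+(u^2-v)x^2-v(u^2-v)/u$, and its only possible splitting is $(x^3+px^2+qx+r)(x^3-px^2+qx-r)$; the irreducibility bookkeeping for the four explicit factorizations is indeed a routine direct check. The gap is in the Diophantine heart of the argument. Eliminating $q,r$ from the coefficient equations does \emph{not} produce an elliptic curve of rank $0$: one gets
\[
4p^2u(-p^4+4p^2u+u^2)(p^4+3u^2)=\bigl(4(4p^2+u)v+u(p^4-10p^2u-3u^2)\bigr)^2,
\]
so the solvability condition in $v$ is $U(U^2+3p^4)(U^2+12Up^2-9p^4)=V^2$ with $U=3u$, a curve of \emph{genus two}. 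Finiteness of its rational points is automatic (Faltings), so rank considerations are not what makes the list finite, and your proposed verification step (``compute a minimal model, verify rank $0$, enumerate torsion'') cannot even be run on this object; this is precisely where the contrast with the two preceding theorems lies (there the analogous condition collapsed to a genuine elliptic curve).

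What the paper actually has to do, and what your plan omits, is a descent on this genus-two curve: work in $\Q(\sqrt{5})$ with fundamental unit $\ep=(1+\sqrt{5})/2$, split $U^2+12Up^2-9p^4=(U+3(2+\sqrt{5})p^2)(U+3(2-\sqrt{5})p^2)$, bound the gcd of the factors by $2^4 3^3\sqrt{5}$, eliminate the $\sqrt{5}$-divisibility and the negative-unit cases by congruence and sign arguments, and thereby reduce to the eight twists
\[
U(U^2+3p^4)(U+3(2-\sqrt{5})p^2)=2^i3^j\ep^k\,\Box ,\qquad i,j,k\in\{0,1\},
\]
which are elliptic curves over $\Q(\sqrt{5})$. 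Only one of these has rank $0$; the other seven have rank $1$, so torsion enumeration is unavailable even after the descent, and one must invoke elliptic Chabauty (rank $1<[\Q(\sqrt{5}):\Q]=2$, imposing the rationality constraint $U/p^2\in\Q$) to pin down the five points $(U/p^2,\pm V)=(0,0),(1,4),(\pm 3,36),(-12,126)$ that yield the four trinomials. Without some substitute for this genus-two analysis, your proof does not establish that the displayed list is complete; it only verifies that the four trinomials do occur.
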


\begin{proof}
By scaling, we may suppose that $A,B,u,v$ are integers.
Clearly $u \neq 0$, and the divisibility condition is that
\[ A=\frac{u^4-3u^2v+v^2}{u},\qquad B=\frac{-v^2(u^2-v)}{u}, \]
in which case $(x^{10}+A x^4+B)/(x^4+u x^2+v)= x^6-u x^4+(u^2-v)x^2-v(u^2-v)/u$.
The sextic is reducible precisely when
\[ x^6-u x^4+(u^2-v)x^2-v(u^2-v)/u = (x^3+p x^2+q x+r)(x^3-p x^2+q x-r), \]
and comparing coefficients,
\[ p^2-2q-u=0, \quad -q^2+2p r+u^2-v=0, \quad r^2-u v + v^2/u=0. \]
Eliminating $q,r$:
%\[ p^8u-4p^6u^2-2p^4u^3+12p^2u^4+9u^5+8p^4u v-80p^2u^2v-24u^3v+64p^2v^2+16u v^2=0, \]
%equivalently,
\begin{equation}
\label{pqueqn}
4 p^2 u(-p^4+4p^2 u+u^2)(p^4+3u^2) = (4(4p^2+u) v+ u(p^4-10 p^2 u-3 u^2))^2,
\end{equation}
equivalently,
\begin{equation}
\label{pUeqn}
U(U^2+3p^4)(U^2+12U p^2-9p^4) = V^2, \qquad U=3u,
\end{equation}
where, without loss of generality, $(U,p)=1$.
This latter equation defines a curve of genus 2, and we show that its finite rational points are
precisely \begin{equation}
\label{Upoints}
(U/p^2,\pm V)=(0, 0), (1, 4), (-3,36), (3,36), (-12, 126).
\end{equation}
The first point corresponds to $B=0$, and the remaining points return (up to scaling) the trinomials given in the Theorem. \\
We work in $\Q(\sqrt{5})$, with fundamental unit $\ep=(1+\sqrt{5})/2$. Then
\[ U(U^2+3p^4)(U+3(2+\sqrt{5})p^2)(U+3(2-\sqrt{5})p^2) = \square. \]
Now gcd$(U+3(2+\sqrt{5})p^2, U(U^2+3p^4)(U+3(2-\sqrt{5})p^2))$ divides $2^4 3^3 \sqrt{5}$.
Thus we have:
\[
U+3(2+\sqrt{5})p^2=\gamma u^{-1} \Box, \quad U(U^2+3p^4)(U+3(2-\sqrt{5})p^2))=\gamma u \Box, \]
where the gcd $\gamma$ is a divisor of $2^4 3^3 \sqrt{5}$ and $u$ is a unit. \\ \\
If $\sqrt{5} \mid \gamma$, then $U \equiv -6p^2 \bmod{\sqrt{5}}$,  so $U \equiv -p^2 \bmod{5}$.
It is not possible for both $p,U$ to be divisible by 5; and thus $5 \nmid U$. However,
$U^2+12U p^2-9p^4 \equiv 0 \bmod{5}$, so from (\ref{pUeqn}), necessarily
$U^2+12U p^2-9p^4 \equiv 0 \bmod{25}$. But $(U+6p^2)^2-45p^2 \equiv 0 \bmod{25}$ implies
$p \equiv 0 \equiv U \bmod{5}$, contradiction. \\

Fix the square root of 5 to be positive. If $\gamma u<0$,
then $U+3(2+\sqrt{5})p^2=\gamma u^{-1} \Box$ implies $U < -3(2+\sqrt{5})p^2 < 0$; but then
\[ U<0, \quad U^2+3p^4>0, \quad U+3(2-\sqrt{5})p^2 < -3(2+\sqrt{5}p^2+3(2-\sqrt{5})p^2=-6\sqrt{5}p^2 < 0; \]
so the product of these three terms cannot be negative. \\
Accordingly, $\gamma u$ takes one of the following values: $2^i 3^j \ep^k$, where, without
loss of generality, $i,j,k=0,1$. Of the eight elliptic curves
\[ U(U^2+3p^4)(U+3(2-\sqrt{5})p^2)) = 2^i 3^j \ep^k \Box \]
one has rank 0 (when $(i,j,k)=(0,0,1)$), and the other seven have rank 1. The Magma routines run
satisfactorily to show that the only solutions under the rationality constraint $U/p^2 \in \Q$
are indeed those corresponding to the points at (\ref{Upoints}), together with the point at infinity.
\end{proof}

\begin{thm}
If $x^{4}+A x+B$ has a rational root, say $r$, and the polynomial
$(x^{12}+A x^3+B)/(x^3-r)$ has a cubic factor, then either
\[ (A,B)=(-(r-w)(r^2+w^2), -r w(r^2-r w+w^2)), \qquad r,w \in \Q, \quad r w(r-w) \neq 0, \]
with factorization
\begin{equation*}
x^{12}+A x^3+B = (x^3-r)(x^3+w)(x^6+(r-w)x^3 +r^2-r w+w^2);
\end{equation*}
or, up to scaling, $(A,B)=(128,256), (-5616,-3888)$, with
\begin{align*}
& \bullet x^{12}+128x^3+256=(x^3+4)(x^3+2x^2+4x+4)(x^6-2x^5+8x^2-16x+16), \\
& \bullet x^{12}-5616x^3-3888=(x^3-18)(x^3+6x+6)(x^6-6x^4+12x^3+36x^2-36x+36).
\end{align*}
\end{thm}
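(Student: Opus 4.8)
Since $x^4+Ax+B=(x-r)g(x)$ with $g(x)=x^3+rx^2+r^2x+(r^3+A)$, we have $x^{12}+Ax^3+B=(x^3-r)\,g(x^3)$, so the hypothesis says exactly that the degree nine polynomial $g(x^3)=x^9+rx^6+r^2x^3+(r^3+A)$ has a cubic factor $p(x)=x^3+ux^2+vx+w$ over $\Q$. I would first dispose of the case in which $x^4+Ax+B$ has a second rational root, say $-w$. Then $g(x)=(x+w)(x^2+px+q)$, and comparing coefficients in $x^4+Ax+B=(x-r)(x+w)(x^2+px+q)$ forces $p=r-w$, $q=r^2-rw+w^2$, hence $A=-(r-w)(r^2+w^2)$, $B=-rw(r^2-rw+w^2)$, and
\[ x^{12}+Ax^3+B=(x^3-r)(x^3+w)(x^6+(r-w)x^3+(r^2-rw+w^2)); \]
this is exactly the first family, the restriction $rw(r-w)\neq 0$ being equivalent to $AB\neq 0$ (the last factor may of course split further, and the non-separable trinomials $x^4+Ax+B$ have a repeated root and so fall here too). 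From now on I would assume $x^4+Ax+B$ has exactly the one rational root $r$, so that $g$ is a separable irreducible cubic, and aim to recover the two sporadic trinomials.

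In this case a rational root of $p$ would be a cube root of a root of $g$ and would make $g$ reducible; so $p$ is irreducible. For a root $\rho$ of $p$ we have $g(\rho^3)=0$ and $[\Q(\rho):\Q]=3=[\Q(\rho^3):\Q]$, whence $\Q(\rho)=\Q(\rho^3)$; thus $\rho\mapsto\rho^3$ carries the three roots $\rho_1,\rho_2,\rho_3$ of $p$ bijectively onto the three roots of $g$, and therefore
\[ g(x^3)=\prod_{i=1}^{3}(x^3-\rho_i^3)=p(x)\,p(\zeta x)\,p(\zeta^2 x), \qquad \zeta^3=1,\ \zeta\neq 1. \]
Expanding the product on the right (or applying Newton's identities to $\sum\rho_i^3$ and $\sum_{i<j}\rho_i^3\rho_j^3$) gives
\[ g(y)=y^3+(u^3-3uv+3w)\,y^2+(v^3-3uvw+3w^2)\,y+w^3, \]
and matching this with $g(y)=y^3+ry^2+r^2y+(r^3+A)$ yields
\[ r=u^3-3uv+3w,\qquad r^2=v^3-3uvw+3w^2,\qquad A=w^3-r^3,\qquad B=-rw^3. \]
Eliminating $r$, and noting that $(u,v,w)$ is determined only up to the weighted scaling $(u,v,w)\mapsto(\lambda u,\lambda^2 v,\lambda^3 w)$ — which induces the scaling equivalence on $(A,B)$ — the sporadic trinomials correspond to the non-trivial rational points of the curve
\[ C:\quad (u^3-3uv+3w)^2=v^3-3uvw+3w^2 \qquad\text{in }\bbb{P}(1,2,3). \]

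It remains to determine $C(\Q)$, and this is where I expect the real work to lie. The patches $u=0$ and $u\neq 0$ cover $C$. On $u=0$ the equation collapses to $v^3=6w^2$, a single point of $\bbb{P}(1,2,3)$, which returns $(A,B)=(-5616,-3888)$ up to scaling. On $u\neq 0$ we may take $u=1$; writing $C$ as a quadratic in $w$ shows its points correspond to the rational points of the elliptic curve
\[ E:\quad 24v^3+9v^2-36v+12=\square \]
(a Weierstrass model being $Y^2=X^3+9X^2-864X+6912$), which carries a visible $3$-torsion point arising from $v=1$; that value gives $(u,v,w)=(2,4,4)$ and hence $(A,B)=(128,256)$ up to scaling, while its companion value of $w$ produces $A=0$. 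So the heart of the proof is a descent on $E$ establishing Mordell--Weil rank $0$, with its finitely many rational points yielding no trinomials beyond the two sporadic ones; this is exactly the kind of computation (here easier, since it can be done over $\Q$) carried out over auxiliary number fields in the proofs of Theorems~\ref{deg8} and~\ref{deg9}, and it is the step I would expect to be the main obstacle. Assembling the two cases then gives the stated dichotomy.
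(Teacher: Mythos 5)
Your proposal is correct, and it reaches the sporadic cases by a genuinely different decomposition than the paper's. The paper writes the degree-nine quotient as a general cubic times a general sextic, eliminates the sextic's coefficients together with $w$, and obtains a product of two factors: the first factor is rewritten as $(4r-u(3v-2u^2))^2=3(8v^3+3v^2u^2-12vu^4+4u^6)$, an elliptic curve of rank $0$ with torsion of order $3$ whose torsion gives $(A,B)=(128,256)$; the second factor is disposed of by a negativity-of-discriminant argument forcing $u=0$, which yields the parametric family and $(-5616,-3888)$. You instead split off the parametric family at the outset (second rational root of the quartic, equivalently $g$ reducible), and for irreducible $g$ you invoke the norm identity $g(x^3)=p(x)p(\zeta x)p(\zeta^2 x)$ -- the same Schinzel Lemma~29 device the paper uses in its $x^{15}$ and $x^{16}$ theorems -- to land on your weighted curve $C$: its $u=0$ patch gives $(-5616,-3888)$, and its $u=1$ patch, after eliminating $w$, is \emph{literally} the paper's first factor, your condition $24v^3+9v^2-36v+12=\square$ coinciding with the paper's, and your model $Y^2=X^3+9X^2-864X+6912$ being the same curve as the paper's minimal model $y^2+xy+y=x^3-x^2-56x+163$ (both reduce to $y^2=4x^3-3564x+38232$ up to the standard changes of variable). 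So your route trades the paper's heavier elimination and discriminant trick for an elementary case split plus the norm identity, at the cost of exactly the same single Mordell--Weil computation; the step you defer -- rank $0$ and torsion $\Z/3\Z$, so that the only affine points have $v=1$ with $w\in\{1,1/2\}$, giving $(128,256)$ and $A=0$ -- is precisely the fact the paper asserts (via Magma), so your prediction is confirmed and your argument is complete modulo that routine verification. Two small polishing remarks: the bijectivity of $\rho\mapsto\rho^3$ onto the roots of $g$ merits the one-line justification that $\rho^3$ has exactly three conjugates because the irreducible cubic $g$ is its minimal polynomial; and you should note explicitly that points of $E$ with $p$ reducible would only reproduce first-family trinomials, so nothing is lost or double-counted in your dichotomy.
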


\begin{proof}
If $x^4+A x+B$ has rational root $r$, then $B=-A r-r^4$. Suppose that the polynomial
$(x^{12}+A x^3-A r-r^4)/(x^3-r)$ has a cubic factor. Then, say,
\[ x^9+r x^6+r^2 x^3+r^3+A = (x^3+u x^2+v x+w)(x^6+a x^5+b x^4+c x^3+d x^2+e x+f), \]
Comparing coefficients, and eliminating $a,b,c,d,e,f,w$:
\begin{align}
\label{twofac}
(& 3v^3-2r^2+3v r u-2r u^3-3v u^4+u^6)(3v^6-9v^5u^2+2v^3r u^3 +18v^4u^4 \\
& -3v^2r u^5-21v^3u^6+r^2u^6+3v r u^7+15v^2u^8-r u^9-6v u^{10}+u^{12})=0. \nonumber
\end{align}
If the first factor at (\ref{twofac}) is zero then
\begin{equation*}
(4r-u(3v-2u^2))^2=3(8v^3+3v^2u^2-12v u^4+4u^6),
\end{equation*}
the equation of an elliptic curve with minimal model $y^2+x y+y=x^3-x^2-56x+163$. This curve
has rank 0 and and the torsion group is of order three. The torsion points lead to
$(A,B)=(128,256)$. \\
Suppose the second factor at (\ref{twofac}) is zero.
The discriminant in $r$ is
\begin{align*}
&-u^6(8v^6-24v^5 u^2+51v^4 u^4-62v^3 u^6+45v^2 u^8-18v u^{10}+3 u^{12}) \\
& = -u^6 (8 X^6+21 u^4 X^4+6 u^8 X^2+u^{12}/16), \qquad X=v-u^2/2,
\end{align*}
which is negative except when $u=0$, in which case either $v=0$ or $3v^3=2r^2$. The former
leads to the parametrization as stated in the Theorem, the latter to $(A,B)=(-5616,-3888)$.\\
\end{proof}

\begin{thm}
Suppose $x^4+Ax+B$ has a rational root, say $v$. If the polynomial
$(x^{16}+Ax^4+B)/(x^4-v)$ is reducible, then it has reducibility type $(6,6)$,
which occurs when, up to scaling, either (1) $(A,B)$ are given by (\ref{AB-8-2}) in Theorem \ref{deg8-2},
when the sextic factors are cubics in $x^2$; or (2), $(v,A,B)$ is one of the following eight cases:
\begin{align*}
& (72,-347004,-1889568), \quad (-4,1088,4096), \quad(540,-49968576,-58047528960),\\
& (1500,-2975000000,-600000000000), \\
& (1234620,-1767811196564438976,-140874409936505522810880),\\
& (-333000,49083580251562500,4048461902770312500000),\\
& (1506456,-718119113273864316,-4068405448481125418940000) \\
& (3749256176,-52702993391145847275486817276,-29085892289306030859388663640000).
\end{align*}
\end{thm}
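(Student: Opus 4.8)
The plan is to proceed exactly as in the preceding divisibility-type theorems (Theorems \ref{deg8}, \ref{deg9}, \ref{deg10}, and the $x^{10}+Ax^4+B$ result). First I would write $x^{16}+Ax^4+B = (x^4-v)\cdot Q(x)$, where $Q(x)=x^{12}+vx^8+v^2x^4+v^3+A$ is obtained by the Division Algorithm (using $B=-Av-v^4$ from the rational root condition on $x^4+Ax+B$). Since $Q(x)$ is a polynomial in $x^4$ up to the extra constant $A$, one invokes the Schinzel-type lemma (as in Theorem \ref{deg8-2}) to see that a factorization of $Q$ over $\Q$ is forced into the shape $(x^6+px^5+qx^4+rx^3+sx^2+tx+w)(x^6-px^5+qx^4-rx^3+sx^2-tx+w)$ — i.e. reducibility type is necessarily $(6,6)$ — and then compare coefficients. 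Eliminating the auxiliary parameters leaves $A,B,v$ (after scaling) constrained by a single equation in two variables, which will be the affine model of a curve; I expect this curve to have genus $2$, matching the pattern of the paper.

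The sub-case (1) should fall out immediately: when $p=r=t=0$ the sextic factors are cubics in $x^2$, the problem collapses to the one already solved in Theorem \ref{deg8-2}, and $(A,B)$ is given by \eqref{AB-8-2}. So the substance is the generic branch, where $p\neq 0$. Here, after eliminating $q,r,s,t,w$ from the coefficient-comparison equations, I would obtain an equation of the form (square) $=$ (sextic in the remaining parameter, with coefficients polynomial in $v$), and reorganize it — as in \eqref{pUeqn} and \eqref{curve9-2} — into $Y^2 = f(X)$ with $f$ of degree $5$ or $6$ and $X$ a rational function of $v$ and $p$. The eight exceptional $(v,A,B)$ listed are then to be matched with the finitely many rational points on this genus-$2$ curve, plus the point(s) at infinity and any giving $B=0$ or $vw(v-w)=0$.

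To determine those rational points I would follow the now-standard descent-plus-elliptic-Chabauty machinery used repeatedly above: factor $f(X)$ over a suitable number field $K$ (here of degree $5$ or $6$, determined by a quintic or sextic factor of $f$), write the norm equation $\mathrm{Norm}_{K/\Q}(\text{linear form in }X) = Y^2$, bound the relevant gcd by a product of small primes, take norms to pin down the supporting ideal and hence reduce to finitely many twists $\ell(X) = \de\,\square$ with $\de$ ranging over a finite set of units times small-prime products, discard the twists that fail a local solvability test (as at $p_3$ in Theorem \ref{222type} or via the everywhere-local computation in Theorem \ref{deg9}), and for each surviving twist pass to an auxiliary elliptic curve over $K$ whose rank is strictly less than $[K:\Q]$ so that the Magma elliptic Chabauty routines compute exactly the points with $X:$(the homogenizing variable) rational. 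Finally I would pull each such point back through the parametrization to recover $(v,A,B)$ and exhibit the $(6,6)$ factorization, checking that the six quadratic/cubic-in-$x^2$ factors are genuinely irreducible (a small rank-$0$ elliptic curve argument, as in Theorem \ref{deg8-2}).

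The main obstacle will be the last step as a matter of feasibility rather than principle: the number field $K$ now has degree up to $6$, its unit group has rank up to $5$, and the elliptic curves arising over $K$ may have rank close to $[K:\Q]$ — exactly the situation flagged in the remark after Theorem \ref{deg9}, where an index-$3$ subgroup had to be enlarged by hand before Chabauty would terminate, and in the remark after Theorem \ref{deg10}, where the analogous genus-$2$ curve could not be resolved at all. So I expect that either (a) the descent is clean and one or two of the twists require explicitly-supplied Mordell--Weil generators to make Chabauty succeed, or (b) one twist has rank equal to $[K:\Q]$ and the argument has to be routed, as in Case II of the $x^{10}+Ax^4+B$ proof and Case $i=0$ of Theorem \ref{deg8}, through the compositum with a quadratic field to drop the relative rank below the relative degree. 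Identifying the right auxiliary field and the right Mordell--Weil generators is where the real work lies; everything else is bookkeeping of the kind already carried out above.
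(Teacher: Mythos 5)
Your outline reproduces the paper's skeleton: divide to get $B=-Av-v^4$ and the quotient $x^{12}+vx^8+v^2x^4+v^3+A=f(x^4)$, invoke Schinzel's Lemma 29, split off the case of factors that are cubics in $x^2$ (which is exactly Theorem \ref{deg8-2} and gives (\ref{AB-8-2})), reduce the remaining case to a genus-$2$ curve, and finish by descent plus elliptic Chabauty. But as a proof it has a genuine gap, in two respects. First, you use the lemma only to force the shape $F(x)F(-x)$ on the sextic factors and then propose to eliminate the six unknown coefficients of $F$. The paper uses the sharper content of Lemma 29: in the non-trivial branch one has $f(-4x^4)=c\,g(x)g(-x)g(ix)g(-ix)$ with $g(x)=x^3+px^2+qx+r$ a \emph{cubic}, so the whole problem is parametrized by three coefficients, the elimination collapses to the single relation $2(p^2-2q)(p^8-4p^6q+4p^4q^2-3q^4)=\square$, and the substitution $X/Z^2=6(2q/p^2-1)$ yields the explicit curve $X(X^4+24X^3Z^2+24X^2Z^4+864XZ^6+1296Z^8)=Y^2$. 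Your six-coefficient elimination ignores the $x\mapsto ix$ symmetry that this branch of the lemma encodes; beyond a dimension count you give no reason it produces a workable hyperelliptic model, and you never actually derive any curve.

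Second, and more seriously, the substance of the theorem is the completeness of the list of eight triples $(v,A,B)$, i.e.\ the determination of \emph{all} rational points on that specific genus-$2$ curve, and this is exactly the part you leave as ``standard machinery plus the real work of finding the right field and generators.'' Moreover your anticipated route (a number field of degree $5$ or $6$ with large unit rank, possibly a compositum) does not match what the curve requires: since the quintic has the rational factor $X$, one writes $X=du^2$ with $d\mid 6$, eliminates $d=-1,2,3,-6$ by local solvability and $d=-3,6$ by rank-$0$ elliptic curves over $\Q$, and for $d=1,-2$ factors the quartic cofactor over $\Q(\sqrt{3})$, reducing to rank-$1$ elliptic curves over that quadratic field where elliptic Chabauty succeeds and returns precisely the points $(X/Z^2,\pm Y)=(-18,864),(6,288),(0,0),(-2,32),(1,47),(36,10152)$ giving the eight trinomials. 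Without carrying out this descent (or a substitute for it), and without pulling the resulting points back through the parametrization and checking irreducibility of the sextic factors, the theorem is not proved; what you have is a correct plan of attack whose decisive computational core is missing and whose predicted shape diverges from the argument that actually works.
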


\begin{proof}
If the polynomial $x^{16}+A x^4+B$ is divisible by $x^4-v$, then
$B=-A v-v^4$ and
\begin{equation*}
(x^{16}+A x^4+B)/(x^4-v)=x^{12}+v x^8+v^2x^4+v^3+A=f(x^4), \mbox{ say}.
\end{equation*}
Lemma 29 from \cite{Sch1} tells us that either (1)
$f(x^4)=-g(x^2)g(-x^2)$ for a cubic polynomial $g$, which leads to
the values of $(A,B)$ as in Theorem \ref{deg8-2} (and the sextic
$g(x^2)$ is irreducible, as before); or (2), $f(-4x^4)=c
g(x)g(-x)g(i x)g(-i x)$, with $c$ constant and $g(x) \in \Z[x]$ of
degree 3. Thus we obtain
\[ -64  x^{12}+16 v x^8 -4 v^2 x^4+v^3+A = 64 g(x) g(-x) g(i x) g(-i x), \]
where $g(x)=x^3+p x^2+q x+r$, say. This gives
\[ -64 x^{12}+16 v x^8 -4 v^2 x^4+v^3+A = 64 G(x^2) G(-x^2), \]
where $G(X)=X^3+(p^2-2q)X^2+(2p r-q^2)X+r^2$. Equating coefficients of powers
of $X$ and eliminating $v$ gives
\[  2(p^2-2q)(p^8-4p^6 q+4p^4 q^2-3q^4)=(2(7p^2-2q)r+4p^5-16p^3 q+10p q^2)^2. \]
and on setting $X/Z^2=6(2q/p^2-1)$, where $X,Z \in \Z$ and $(X,Z^2)=1$, there results
\[ C: \; X(X^4 + 24 X^3 Z^2 + 24 X^2 Z^4 + 864 X Z^6 +1296 Z^8) = Y^2. \]
We shall show that the set of rational points on $C$ comprises precisely the point at
infinity, with $Z=0$, and the set:
\[ (X/Z^2,\pm Y)=(-18,864), (6,288), (0,0), (-2,32), (1,47), (36, 10152). \]
These points return the trinomials listed in the second statement of the Theorem.\\
% infinity: v=72
% X/Z^2=-18: v-4, 540
% X/Z^2=6: v=1500
% X/Z^2=-2: v=1234620
% X/Z^2=36: v=3749256176
% X/Z^2=1: v=-333000, 1506456
Certainly $X=d u^2$ with $d \mid 6$, so $d=\pm 1, \pm 2, \pm 3, \pm 6$. Then
\[ X=d u^2, \quad X^4 + 24 X^3 Z^2 + 24 X^2 Z^4 + 864 X Z^6 +1296 Z^8 = d v^2, \]
and the quartic is locally unsolvable for $d=-1,2,3,-6$. When $d=-3,6$, the quartic
is an elliptic curve of rank 0, and the only solutions are given by $(X,Z^2)=(6,1)$, $(u,v)=(1,48)$.
It remains to deal with $d=1, -2$.\\ \\
Case I: $d=1$:
\[ X=u^2, \qquad X^4 + 24 X^3 Z^2 + 24 X^2 Z^4 + 864 X Z^6 + 1296 Z^8 = v^2, \]
so that
\[ u^8 + 24 u^6 Z^2 + 24 u^4 Z^4 + 864 u^2 Z^6 + 1296 Z^8 = v^2. \]
Over $\Q(\sqrt{3})$,
\[ (u^4 + (12+8\sqrt{3})u^2 Z^2 + 36 Z^4) (u^4 + (12-8\sqrt{3})u^2 Z^2 + 36 Z^4) = v^2; \]
and the gcd of the two factors on the left can be divisible only by $(1+\sqrt{3})$, $(\sqrt{3})$.
Thus
\[ u^4 + (12+8\sqrt{3})u^2 Z^2 + 36 Z^4 = \pm (2+\sqrt{3})^i (1+\sqrt{3})^j (\sqrt{3})^k \Box. \]
Taking norms, $v^2 = (-2)^j (-3)^k \Box$, so that $j=k=0$, and
\[ u^4 + (12+8\sqrt{3})u^2 Z^2 + 36 Z^4 = \de \Box, \quad \de=\pm (2+\sqrt{3})^i. \]
Of the four possibilities for $\de$, only $\de=1$ gives a curve locally solvable above $2$;
and in fact the curve is elliptic with rank $1$. The Magma routines work successfully,
delivering the points $(u,Z)=(1,0),(0,1),(1,1),(6,1)$. \\ \\
Case II: $d=-2$:
\[ X= -2u^2, \qquad X^4 + 24 X^3 Z^2 + 24 X^2 Z^4 + 864 X Z^6 + 1296 Z^8 = -2 v^2, \]
so that
\[ u^8 - 12 u^6 Z^2 + 6 u^4 Z^4 - 108 u^2 Z^6 + 81 Z^8 = -2 V^2. \]
Over $\Q(\sqrt{3})$,
\[ (u^4 + (-6-4\sqrt{3})u^2 Z^2 + 9 Z^4) (u^4 + (-6+4\sqrt{3})u^2 Z^2 + 9 Z^4) = -2 V^2, \]
and the gcd of the two factors on the left can be divisible only by $(1+\sqrt{3})$, $(\sqrt{3})$.
Thus
\[ u^4 + (-6-4\sqrt{3})u^2 Z^2 + 9 Z^4 = \pm (2+\sqrt{3})^i (1+\sqrt{3})^j (\sqrt{3})^k \Box. \]
Taking norms, $-2 V^2 = (-2)^j(-3)^k \Box$, so that $(j,k)=(1,0)$, with
\[ u^4 + (-6-4\sqrt{3})u^2 Z^2 + 9 Z^4 = \de \Box, \qquad \de=\pm (2+\sqrt{3})^i (1+\sqrt{3}). \]
Of the four possibilities for $\de$, only $\de=-(2+\sqrt{3}(1+\sqrt{3})$ gives a curve locally
solvable above $2$, and in fact an elliptic curve of rank $1$. The Magma routines work successfully,
delivering the points $(u,Z)=(1,1),(3,1)$.
\end{proof}

\begin{thm}
The trinomial $x^{15}+Ax^3+B$ is reducible if and only if either $x^5+Ax+B$ is
reducible or up to scaling $(A,B)=(-81,216)$, $(270,729)$ with
\begin{equation*}
\begin{array}{ll}
x^{15}-81x^3+216= & (x^5+3x^4+6x^3+9x^2+9x+6)\times \\
                  & (x^{10}-3x^9+3x^8-6x^5+9x^4-9x^3+27x^2-54x+36), \\
x^{15}+270x^3+729=& (x^5+3x^4+6x^3+9x^2+12x+9)\times \\
                  & (x^{10}-3x^9+3x^8-3x^6+9x^5-18x^4+63x^2-108x+81).
\end{array}
\end{equation*}
\end{thm}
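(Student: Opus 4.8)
The structural fact to exploit is that $\gcd(15,3)=3$, so $x^{15}+Ax^3+B=g(x^3)$ with $g(y)=y^5+Ay+B$. The backward implication is immediate: if $x^5+Ax+B$ is reducible then $g(x^3)$ inherits that factorization, and for the two exceptional pairs one simply multiplies out the displayed decompositions. For the forward implication, suppose $g(x^3)$ is reducible. If $g$ is reducible we are in the first alternative, so assume $g$ irreducible, and aim to show $(A,B)$ is, up to scaling, one of the two listed pairs. First I would take a monic irreducible factor $h$ of $x^{15}+Ax^3+B$ of minimal degree $d\le 7$. A root $\beta$ of $h$ satisfies $\beta^3=\gamma$ for a root $\gamma$ of $g$; irreducibility of $g$ gives $[\Q(\gamma):\Q]=5$, and $\Q(\gamma)\subseteq\Q(\beta)$ together with $[\Q(\beta):\Q(\gamma)]\in\{1,3\}$ forces $d\in\{5,15\}$, hence $d=5$. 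So $x^{15}+Ax^3+B$ has a monic quintic factor $h(z)=z^5+c_4z^4+c_3z^3+c_2z^2+c_1z+c_0\in\Q[z]$. (Equivalently one may invoke Capelli's theorem, $3$ being prime.)

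Next I would convert the divisibility into equations for the $c_i$. Grouping the monomials of $h$ by residue modulo $3$ and writing $\omega$ for a primitive cube root of unity,
\[ h(z)=P(z^3)+z\,Q(z^3)+z^2R(z^3),\qquad P(t)=c_0+c_3t,\ \ Q(t)=c_1+c_4t,\ \ R(t)=c_2+t, \]
the fifteen roots of $h(z)h(\omega z)h(\omega^2z)$ are exactly the fifteen cube roots of the five roots of $g$, that is, the roots of $g(z^3)$; being monic of degree $15$, it therefore equals $z^{15}+Az^3+B$. Applying $(a+b+c)(a+\omega b+\omega^2c)(a+\omega^2b+\omega c)=a^3+b^3+c^3-3abc$ with $a=P(z^3)$, $b=zQ(z^3)$, $c=z^2R(z^3)$ turns this into the identity
\[ P(t)^3+t\,Q(t)^3+t^2R(t)^3-3t\,P(t)Q(t)R(t)=t^5+At+B\qquad(t=z^3). \]
Matching the coefficients of $t^0$ and $t^1$ expresses $B$ and $A$ through the $c_i$, while matching $t^2,t^3,t^4$ gives three polynomial relations among $c_0,\dots,c_4$. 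The $t^4$ relation determines $c_2$ linearly in $c_3,c_4$; the remaining two are each linear in $c_0$, so eliminating $c_0$ and normalising $c_4=1$ via the weighted scaling $c_k\mapsto\lambda^{\,k-5}c_k$ (the case $c_4=0$ is checked to force $AB=0$), and then completing the square in $c_1$, I would be left with the genus-$2$ curve
\[ C:\;Y^2=36c_3^6-108c_3^5+189c_3^4-192c_3^3+108c_3^2-32c_3+4=(2c_3-1)(18c_3^5-45c_3^4+72c_3^3-60c_3^2+24c_3-4). \]
One checks that the two exceptional trinomials are precisely the two rational points of $C$ over $c_3=\tfrac23$ (where the right-hand side is $\tfrac4{81}$), while the points at infinity, the Weierstrass point $(c_3,Y)=(\tfrac12,0)$, and the point over $c_3=0$ all pull back to $AB=0$.

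The hard part will be exactly the one faced in Theorems~\ref{deg8} and \ref{deg9}: showing that $C$ has no rational points besides those just listed. The plan is the same descent. After factoring the right-hand side over $\Q$ (the linear factor $2c_3-1$ is already split off) and, if necessary, over a small auxiliary number field, each factor is put equal to a fixed squarefree constant times a square (the finitely many admissible constants being cut down by norm and local conditions), so that $C$ is replaced by finitely many curves each fibred over an elliptic curve over a number field $K$. The rank-$0$ cases are settled by torsion, and the cases of positive rank smaller than $[K:\Q]$ by the elliptic Chabauty routines of Magma~\cite{Mag}. Once this confirms that the rational points of $C$ are the degenerate ones together with the pair over $c_3=\tfrac23$, back-substitution recovers $(A,B)=(-81,216)$ and $(270,729)$ up to scaling, the quintic factors $h$, and the decic cofactors $(x^{15}+Ax^3+B)/h(x)$ displayed in the statement. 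I would finish by recording that $y^5-81y+216$ and $y^5+270y+729$ are themselves irreducible, so that the two alternatives in the theorem are genuinely distinct.
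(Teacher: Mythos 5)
Your reduction is sound and in substance coincides with the paper's. Where the paper invokes Lemma 29 of Schinzel~\cite{Sch1} to write $x^{15}+Ax^3+B=f(x)f(\zeta_3x)f(\zeta_3^2x)$ with $f$ quintic, your minimal-degree/Capelli argument produces the same quintic factor, and the identity $a^3+b^3+c^3-3abc$ gives the same coefficient relations. Your elimination (normalise $c_4=1$, solve the $t^4$ and $t^3$ relations, complete the square in $c_1$) lands on exactly the paper's curve: your sextic $36c_3^6-108c_3^5+189c_3^4-192c_3^3+108c_3^2-32c_3+4$ is the expansion of $(p^2-2q)(4p^{10}-24p^8q+60p^6q^2-72p^4q^3+45p^2q^4-18q^5)$ at $p=1$, $q=c_3$, and your accounting of the rational points you can see — the trivial ones at infinity, at $c_3=\tfrac12$ and over $c_3=0$, and the two points over $c_3=\tfrac23$ yielding $(A,B)=(-81,216)$ and $(270,729)$ up to scaling — checks out.

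The gap is that the mathematical core of the theorem, namely that the genus-2 curve has \emph{no other} rational points, is only announced, not proved. Two concrete points. First, your opening descent move — setting the split-off linear factor $2c_3-1$ and the complementary quintic each equal to $d\cdot\square$ over $\Q$ — does not by itself reduce to elliptic curves: the twisted quintic $18c_3^5-45c_3^4+72c_3^3-60c_3^2+24c_3-4=d\,v^2$ is again of genus 2, so the "small auxiliary number field" you mention is not optional, it is where all the work lives. Second, what the paper actually does there is highly specific: it passes to the model $X^5+9X^4Z^2-48X^3Z^4+864X^2Z^6+2304Z^{10}=Y^2$ via $X/Z^2=4p^2/(p^2-2q)$, factors the quintic form as a norm from $K=\Q(\theta)$ with $\theta^5+2\theta^4+4\theta^3+4\theta^2+2\theta+4=0$ (class number 1, explicit fundamental units and ideal factorizations of $2,3,5$), eliminates all prime-ideal twists by taking norms, kills the sign by the real-root inequality $X/Z^2\ge\theta_0$, cuts the remaining unit twists down to $\de=1,\ep_1$ by local solvability, and finally runs elliptic Chabauty on two elliptic curves of rank $3$ over the degree-5 field $K$ — the inequality $\mathrm{rank}<[K:\Q]$ being precisely what makes Chabauty applicable. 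None of these contingent facts (which twists survive, that the ranks are $3$, that the Magma routines return a provable answer) can be asserted in advance; compare the remark following Theorem \ref{deg10}, where the entirely analogous genus-2 curve for $x^{10}+Ax+B$ resisted exactly this treatment. Until the descent data are exhibited and the Chabauty computations carried out, the forward implication — that no $(A,B)$ beyond the two listed pairs occurs when $x^5+Ax+B$ is irreducible — remains unestablished.
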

\begin{proof}
If $x^{15}+Ax^3+B$ is reducible and $x^5+Ax+B$ is irreducible, then by
Lemma 29 in \cite{Sch1} we know that
\begin{equation*}
x^{15}+Ax^3+B=f(x)f(\zeta_{3}x)f(\zeta_{3}^2x),
\end{equation*}
where $f(x)=x^5+px^4+qx^3+rx^2+sx+t$.
Expanding the right hand side and equating coefficients:
%\begin{equation*}
%\begin{array}{lll}
%A=s^3-3r s t+3q t^2, & B=t^3, & p^3-3p q+3r=0, \\
%r^3-3q r s+3p s^2+3q^2t-3p r t-3s t=0, & &q^3-3p q r+3r^2+3p^2s-3q s-3p t=0.
%\end{array}
%\end{equation*}
\[ A=s^3-3r s t+3q t^2, \qquad B=t^3, \qquad p^3-3p q+3r=0, \]
\[ r^3-3q r s+3p s^2+3q^2t-3p r t-3s t=0, \qquad q^3-3p q r+3r^2+3p^2s-3q s-3p t=0. \]
Eliminating $r,s$, noting that $q=p^2$ leads to $B=0$, we get
\begin{align*}
&\left(\frac{(2q-p^2)(2p^6-6p^4q+9p^2q^2-3q^3)-18p q t}{(p^2-q)}\right)^{2}\\
%&\left(\frac{9(2q-p^2)(2p^6-6p^4q+9p^2q^2-3q^3)+162p q t}{9(p^2-q)}\right)^{2}\\
                     &=(p^2-2q)(4p^{10}-24p^8q+60p^6q^2-72p^4q^3+45p^2q^4-18q^5).
\end{align*}
The restriction $p^2=2q$ leads to trivial solutions; and thus
the problem of reducibility of $x^{15}+Ax^3+B$ reduces
to finding all rational points on the genus two curve
\begin{equation*}
C:\;(p^2-2q)(4p^{10}-24p^8q+60p^6q^2-72p^4q^3+45p^2q^4-18q^5)=\square,
\end{equation*}
where we can assume without loss of generality that
$(p^2,q)=1$. We show that the finite points comprise
precisely the following set: $(p^2,q)=(1,0),(1,2),(1,2/3)$, which
lead to the factorizations displayed in the statement of the Theorem.
Set $q/p^2=(X-4Z^2)/(2X)$, $X/Z^2=4p^2/(p^2-2q)$, so that the equation of the
curve takes the form
\[ X^5 + 9 X^4 Z^2 - 48 X^3 Z^4 +864 X^2 Z^6 + 2304 Z^{10} = Y^2. \]
The quintic has precisely one real root $\theta_0$ for $X/Z^2$, $\theta_0 \sim -15.63983$,
and thus $Y^2 \geq 0$ implies $X/Z^2 \geq \theta_0$.
%Known points: $(X,\pm Y,Z^2)=(1,0,0), (0,48,1), (4,128,1), (-12,384,1)$.
Set $K=\Q(\theta)$ where $\theta^5+2\theta^4+4\theta^3+4\theta^2+2\theta+4=0$. The ring of integers is
$\Z[1,\theta,\theta^2,\theta^3,\theta^4/2]$, and the class number is 1. Fundamental units are given by
\[ \ep_1=1+3\theta^2+\theta^3+\theta^4, \qquad \ep_2=5-3\theta-6\theta^2-4\theta^3-2\theta^4, \]
of norm $+1$. We have the ideal factorizations
\[ (2)=p_2 p_2'^4, \quad (3)=p_3^5, \quad (5)=p_5^2 p_5', \qquad \mbox{Norm}(p_5)=5, \; \mbox{Norm}(p_5')=5^3.  \]
Now
%\[  (X + 1/2*(5*t^4 + 8*t^3 + 20*t^2 + 12*t + 10)*Z^2) (X^4 + 1/2*(-5*t^4 - 8*t^3 - 20*t^2 - 12*t + 8)*X^3*Z^2 + (-2*t^4 - 32*t^3 - 48*t^2 - 104*t - 80)*X^2*Z^4 + (72*t^4 + 192*t^3 + 96*t^2 + 96*t + 192)*X*Z^6 + (- 96*t^4 - 384*t)*Z^8) = Y^2. \]
\[ \mbox{Norm}_{K/\Q}(X+ 1/2(5\theta^4 + 8\theta^3 + 20\theta^2 + 12\theta + 10)Z^2) = Y^2, \]
%The gcd of the two factors on the left is divisible only by $p_2'$, $p_3$, and $p_5$.
%Thus
and it may be checked that a prime ideal dividing $X + 1/2(5\theta^4 + 8\theta^3 + 20\theta^2 + 12\theta + 10)Z^2)$
to an odd power must be one of $p_2'=(q_2')=(1-\theta+\theta^2+\theta^4/2)$, $p_3=(q_3)=(-1-\theta)$, $p_5=(q_5)=(1+\theta^2)$.
Thus with $\de=\pm \ep_1^i \ep_2^j q_2'^k q_3^m q_5^n$, $i,j,k,m,n=0,1$, it follows that
\[ X + 1/2(5\theta^4 + 8\theta^3 + 20\theta^2 + 12\theta + 10)Z^2 = \de^{-1} a^2, \]
\begin{align*}
X^4 + & 1/2(-5\theta^4 - 8\theta^3 - 20\theta^2 - 12\theta + 8)X^3Z^2 + (-2\theta^4 - 32\theta^3 - 48\theta^2 - 104\theta - 80)X^2Z^4 \\
 & + (72\theta^4 + 192\theta^3 + 96\theta^2 + 96\theta + 192)XZ^6 + (- 96\theta^4 - 384\theta)Z^8 = \de b^2,
\end{align*}
with integers $a,b$ of $K$ satisfying $q_2'^k q_3^m q_5^n a b = Y$.
Taking norms, $Y^2 = 2^k 3^m 5^n \Box$, so that $k=m=n=0$.
Further, $\ep_1$, $\ep_2$ evaluated at $\theta_0$ are approx. $8.399$ and $0.3477$:
so are positive. Thus $X/Z^2 \geq \theta_0 =-1/2 (5\theta^4 + 8\theta^3 + 20\theta^2 + 12\theta + 10)$ implies the negative sign cannot hold.
Hence in particular
%\[ X + 1/2(5t^4 + 8t^3 + 20t^2 + 12t + 10) Z^2 = \ep_1^{-i} \ep_2^{-j} a^2, \]
\begin{align*}
X^4 + & 1/2(-5\theta^4 - 8\theta^3 - 20\theta^2 - 12\theta + 8)X^3Z^2 + (-2\theta^4 - 32\theta^3 - 48\theta^2 - 104\theta - 80)X^2Z^4 \\
& + (72\theta^4 + 192\theta^3 + 96\theta^2 + 96\theta + 192)XZ^6 + (- 96\theta^4 - 384\theta)Z^8 = \ep_1^i \ep_2^j b^2,
\end{align*}
with $a b = Y$. The quartic curve is everywhere locally solvable if and only if
$\de=1, \ep_1$. \\
%$(X,\pm Y,Z^2)=(1,0,0)$ belongs to $(0,0)$ with $(A,B)=(1,1)$; \\
%$(0,48,1)$ belongs to $(0,0)$ with
%$(A,B)=(1/2*(-t^4 + 4*t + 2), -4*t^4 - 16*t^3 - 24*t^2 - 16*t - 16)$; \\
%$(4,128,1)$ belongs to $(1,0)$, with $(A,B)=(1/2*(-3*t^4 - 4*t^3 - 12*t^2 - 4*t - 6), -32*t^3 - 32*t^2 - 32*t - 64)$; \\
%$(-12,384,1)$ belongs to $(1,0)$, with $(A,B)=(1/2*(t^4 + 4*t^3 + 4*t^2 + 4*t - 6), -32*t^4 - 32*t^3 - 32*t + 64)$. \\
Case I: $\de=1$. Now we have
\begin{align*}
C_1: X^4 - & 1/2(5\theta^4 + 8\theta^3 + 20\theta^2 + 12\theta - 8)X^3Z^2 - (2\theta^4 + 32\theta^3 + 48\theta^2 + 104\theta + 80)X^2Z^4 \\
& + (72\theta^4 + 192\theta^3 + 96\theta^2 + 96\theta + 192)XZ^6 + (- 96\theta^4 - 384\theta)Z^8 = b^2.
\end{align*}
%The curve is birationally equivalent to
%\begin{align*}
%y^2 = x^3 & - (8t^4 + 45t^3 + 54t^2 + 17t + 44)x^2 - (3742t^4 + 6688t^3 + 3539t^2 + 5896t + 4760)x \\
%& + (758904t^4 + 1182211t^3 + 562848t^2 + 1128430t + 744012),
%\end{align*}
The equation is that of an elliptic curve
of rank 3, and the Magma routines show that the only points on $C_1$
with $X/Z^2 \in \Q$ are given by $(X,Z^2)=(1,0),(0,1)$. \\
Case II: $\de=\ep_1$. Now we have
\begin{align*}
C_2: X^4 + & 1/2(-5\theta^4 - 8\theta^3 - 20\theta^2 - 12\theta + 8)X^3Z^2 + (-2\theta^4 - 32\theta^3 - 48\theta^2 - 104\theta - 80)X^2Z^4 \\
& + (72\theta^4 + 192\theta^3 + 96\theta^2 + 96\theta + 192)X Z^6 + (- 96\theta^4 - 384\theta)Z^8 = \ep_1 b^2,
\end{align*}
with point at $(X,b,Z^2)=(4, -32\theta^3-32\theta^2-32\theta-64, 1)$.
%The curve is birationally equivalent to
%\begin{align*}
%y^2 = x^3 & - 1/4(5t^4+20t^3-84t^2+32t-124) x^2 + 1/8(2321t^4+272t^3+5196t^2+308t+3680) x \\
%& + 1/16(43591t^4+56348t^3+136716t^2-80t+131440),
%\end{align*}
The curve is elliptic with $K$-rank 3; and the Magma routines work satisfactorily to show that the only points on $C_2$
with $X/Z^2 \in \Q$ are $(X,Z^2)=(4,1),(-12,1)$.
\end{proof}

\section{Concluding remarks and some new sporadic trinomials}
Generalizing a statement in Theorem \ref{1111type}, we have the following result.
\begin{thm}\label{111n}
There are no trinomials $x^n+A x+B$, $n$ even, with three linear factors.
\end{thm}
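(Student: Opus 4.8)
The plan is to argue entirely over $\R$, using the shape of the real curve $y = x^n + A x + B$. Write $f(x) = x^n + A x + B$ and observe that, since $n$ is even, $n-1$ is odd; hence $x \mapsto x^{n-1}$ is a strictly increasing bijection of $\R$, so $f'(x) = n x^{n-1} + A$ is strictly increasing on $\R$ and therefore has exactly one real zero. Equivalently, $f''(x) = n(n-1)x^{n-2} \geq 0$ for all $x$ (because $n-2$ is even), so $f$ is convex on $\R$, and strictly convex off the single point $x=0$.

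First I would record the elementary consequence: a polynomial whose derivative has at most one real zero has at most two real zeros, counted with multiplicity (the generalized Rolle statement, applied with $k=1$). In the convexity formulation: a function that is strictly convex off a single point meets the $x$-axis at most twice, and if it has a double root then that is its only root. Either way, $f$ has at most two real zeros counted with multiplicity, hence at most two rational roots counted with multiplicity.

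Then I would conclude: a factorization of $x^n + A x + B$ into irreducibles in $\Q[x]$ that contains three linear factors would exhibit three rational roots counted with multiplicity, hence three real roots counted with multiplicity, contradicting the bound of two. Therefore no such trinomial exists. I do not anticipate a genuine obstacle here; the only point needing care is that ``three linear factors'' must be read with multiplicity, which is precisely why the real‑root bound should be stated with multiplicity. It is worth emphasizing that the hypothesis $n$ even is essential — it is what forces $f''$ to have constant sign (equivalently $f'$ to be monotone) — since for odd $n$ trinomials such as $x^3 - 7x + 6 = (x-1)(x-2)(x+3)$ do possess three linear factors.
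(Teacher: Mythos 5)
Your proof is correct, and it follows the same overall strategy as the paper -- showing that over $\R$ the polynomial $f(x)=x^n+Ax+B$ with $n$ even has at most two real roots, so that three linear factors (i.e.\ three rational roots counted with multiplicity) are impossible -- but it uses a different elementary device: the paper's proof is a one-line application of Descartes' Rule of Signs to the four sign patterns of $(A,B)$, whereas you use convexity, i.e.\ $f''(x)=n(n-1)x^{n-2}\ge 0$ because $n-2$ is even, equivalently the strict monotonicity of $f'(x)=nx^{n-1}+A$. Both devices yield the bound ``at most two real roots counted with multiplicity'' and both transfer to an arbitrary real (closed) field, in line with the paper's subsequent remark. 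One small point of bookkeeping you should make explicit: the generalized Rolle count requires the zeros of $f'$ to be counted \emph{with multiplicity} (compare $f=x^3$, where $f'$ has a single real zero yet $f$ has a triple root), so you need the unique real zero of $f'$ to be simple -- which holds because $f''$ vanishes only at $x=0$ and $f'(0)=A\neq 0$ -- and in the convexity formulation you must also exclude a root of $f$ of multiplicity $\ge 3$, which would force $f''(r)=0$, hence $r=0$ and $B=f(0)=0$. Both exclusions are supplied by the paper's standing assumption $AB\neq 0$, so the argument is complete once you invoke it.
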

\begin{proof}
By considering the four possibilities for sign of $A,B$, it follows immediately from
Descartes' Rule of Signs that the polynomial $x^n+A x+B$, $n$ even, can have at most two real
roots; and the assertion of the Theorem follows.
\end{proof}

\begin{rem}
{\rm This argument applies over any real field, so that
reducibility type $(1,1,1,n-3)$ is impossible for trinomials $x^n+A x+B$, $n$ even, defined over any real field.}
\end{rem}

\noindent
When $n$ is {\it odd}, let $p,q,r$ be distinct rational roots of $x^n+A x+B$ (where now Descartes' Rule
of Sign implies $A<0$).
Then
\[ p^n+A p+B=0, \quad q^n+A q+B=0, \quad r^n+A r+B=0, \]
and eliminating $A,B$,
\[ (q-r)p^n+(r-p)q^n+(p-q)r^n= (p-q)(q-r)(r-p) G_n(p,q,r) = 0. \]
The equation $G_n(p,q,r)$ (which represents a curve of genus $(n-4)(n-3)/2$)
has indeed (trivial) points, but we do not know how to show these trivial points form the
complete set of solutions, which we believe to be the case. Indeed, we firmly believe that
the following conjecture is true.

\begin{conj}\label{111n-3typeconj}
Let $n\geq 4$. There are no trinomials $x^n+A x+B$ defined over $\Q$ with reducibility type $(1,1,1,n-3)$.
\end{conj}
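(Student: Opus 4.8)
The plan is to treat $n$ even and $n$ odd separately. For $n$ even the assertion is exactly Theorem~\ref{111n}, which is immediate from Descartes' Rule of Signs, so the whole content of the conjecture lies in the case $n$ odd (and then $n\ge5$, since $n=4$ is even). For $n$ odd I would make explicit the reduction indicated just before the conjecture. If $x^n+Ax+B$ has type $(1,1,1,n-3)$, then, scaling so that $1$ is a root, synthetic division gives $x^n+Ax+B=(x-1)(x^{n-1}+x^{n-2}+\dots+x+(A+1))$, and the degree-$(n-1)$ cofactor must acquire two further rational roots $p,q$. One checks that the degenerate configurations do not occur for odd $n\ge5$: a repeated rational root would force $\phi_n'(p)=0$, where $\phi_n(x)=1+x+\dots+x^{n-1}$, and $\phi_n'$ (which has leading coefficient $n-1$ and constant term $1$) has no rational zero in this range; and $p,q\ne0$ because the constant term $A+1=-B$ is nonzero. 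Eliminating $A$ places $(p,q)$ on the curve $G_n(p,q,1)=0$, and via the generating-function identity $\prod_i(1-x_it)^{-1}=\sum_{k\ge0}h_k(x_1,x_2,x_3)t^k$ one recognises $G_n$, up to sign, as the complete homogeneous symmetric polynomial $h_{n-2}$, the remaining degree-$(n-3)$ factor being $\sum_{i=0}^{n-3}h_i(p,q,1)x^{n-3-i}$. In this way the conjecture becomes equivalent to: for odd $n\ge5$ the plane curve $h_{n-2}(p,q,r)=0$, of degree $n-2$ and genus $(n-4)(n-3)/2$, has no rational point with $p,q,r$ pairwise distinct and all nonzero.

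Granting this reformulation, I would proceed curve by curve. For $n=5$ the curve is an elliptic curve of rank $0$ over $\Q$, so its only rational points are the finitely many trivial ones, which is the $(1,1,1,2)$ part of Theorem~\ref{quintype1}. For odd $n\ge7$ the genus is at least $6$, so Faltings' theorem gives finiteness but no explicit list, and here I would exploit the $S_3$-action permuting $p,q,r$: the curve covers its quotients by a transposition, by $A_3$, and by $S_3$, all of far smaller genus, a rational point upstairs maps to rational points on each quotient, and a point downstairs lifts precisely when the associated cubic $x^3-e_1x^2+e_2x-e_3$ splits completely over $\Q$. For the first few values of $n$ one can hope to exhibit such a quotient -- or an unramified cover of it, or a base change to an auxiliary number field $K$ over which some elliptic quotient has Mordell--Weil rank less than $[K:\Q]$ -- as a curve on which the Chabauty--Coleman method, elliptic Chabauty, or a Mordell--Weil sieve pins down all rational points, exactly in the spirit of the genus-$2$ arguments earlier in the paper. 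Each single odd $n$ is then, in principle, a finite computation.

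The main obstacle is uniformity in $n$. The genus $(n-4)(n-3)/2$ grows quadratically, the Jacobians acquire larger rank, and no single descent addresses all $n$ simultaneously, so the curve-by-curve strategy can only ever settle finitely many exponents. A uniform proof seems to require a genuinely new ingredient: perhaps a congruence obstruction -- reduction of $h_{n-2}(p,q,r)=0$ modulo $2$ already forces exactly one of the primitive integer coordinates to be even when $n\equiv3\pmod4$, but this is a long way from a complete local obstruction -- or a quantitative refinement of the Descartes argument constraining the sizes of the three real roots of a trinomial, or an arithmetic-geometric bound controlling the integral points of the whole family at once. Absent such an idea I would content myself with proving the statement for small odd $n$, say $n=7$ and perhaps $n=9,11$, and leaving the general case as the conjecture records it.
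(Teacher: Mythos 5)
This statement is a \emph{conjecture}, and the paper contains no proof of it: the authors themselves only record the reduction (for odd $n$, three distinct rational roots $p,q,r$ force $G_n(p,q,r)=0$, a curve of genus $(n-4)(n-3)/2$) and state explicitly that they do not know how to show the trivial points exhaust the rational points. Your proposal does not prove the statement either, and you say so candidly, so there is no error of overclaiming; what you write is essentially the paper's own framing, organized a little further. Your dispatch of even $n$ is exactly Theorem~\ref{111n}, your $n=5$ case is Theorem~\ref{quintype1}, and your reduction of odd $n\ge 7$ to the curve $G_n=0$ coincides with the discussion immediately preceding the conjecture; the identification of $G_n$ with the complete homogeneous symmetric polynomial $h_{n-2}$ (via the Schur/Vandermonde determinant identity) and the suggestion to exploit the $S_3$-action and elliptic Chabauty on quotients are genuine additions beyond the paper, and they are a plausible route to settling individual small odd $n$, but, as you acknowledge, they cannot give uniformity in $n$, which is precisely why the statement remains a conjecture.

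Two small points in your reduction deserve care if you pursue the computational program. First, reducibility type $(1,1,1,n-3)$ requires three linear irreducible factors, not three \emph{distinct} rational roots, so the degenerate case of a repeated rational root of the trinomial itself (after scaling, a double root at $1$, forcing the trinomial $x^n-nx+(n-1)$) must be treated separately; your $\phi_n'$ argument only excludes repeated roots of the cofactor $x^{n-1}+\cdots+x+(A+1)$, not a root shared between $x-1$ and that cofactor. Second, the assertion that $(n-1)x^{n-2}+(n-2)x^{n-3}+\cdots+2x+1$ has no rational zero is stated but not proved; it is believable (any real root is negative and the rational root theorem leaves only finitely many candidates for each $n$), but a uniform verification is still needed. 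Neither point affects the honest conclusion that the general case is open.
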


%\section{Some new sporadic trinomials}
Finally, while studying the trinomials of this paper, the following sporadic trinomial factorizations came to light,
and do not appear to have been previously recorded:

\begin{center}
\begin{tabular}{c|c}
trinomial & factor \\ \hline
$x^9 + 27 x^4 - 108$ & $x^3 + 3 x^2 + 6 x+ 6$ \\
$x^{10} + 297 x^3 + 648$ & $x^5 + 3 x^3 + 9 x^2 - 9 x + 18$ \\
$x^{11}+12x+8$ & $x^5 - 2x^4 + 2x^3 - 2x^2 + 2$ \\
$x^{11} - 6184976 x^3 + 4216540160$ &  $x^3 + 14 x^2 + 98 x + 392$ \\
$x^{13} - 340224 x + 732160$ & $x^3 - 2 x^2 + 8 x - 20$ \\
$x^{16} + 3486328125 x + 9277343750$ & $x^3 + 5 x^2 + 25 x + 50$ \\
$x^{16} + 34816 x^3 - 552960$ & $x^4 + 2 x^3 - 8 x - 24$. \\ \hline
\end{tabular}
\end{center}

\vspace{0.25in}

\noindent
{\bf Acknowledgements}: We are grateful to the referee for a careful reading of the original
manuscript, and for helpful suggestions. All computations in this paper were carried out
using Magma~\cite{Mag}. The research of the second author was supported by Polish Government funds for science, grant I 2010 044 770 for the years 2010--2011.

\vspace*{0.25in}

\noindent
School of Mathematics and Statistical Sciences, Arizona State University, Tempe AZ 85287-1804, USA; e-mail: bremner@asu.edu, \\ \\
Jagiellonian University, Faculty of Mathematics and Computer Science, Institute of Mathematics, {\L}ojasiewicza 6, 30-348 Krak\'ow, Poland; email: Maciej.Ulas@im.uj.edu.pl
\end{document}